%
%
%
%
\documentclass{amsart}
\usepackage{amsfonts,amsmath,amssymb,amsthm,amscd,enumerate,hyperref,comment}
\usepackage[dvipsnames]{xcolor}

\newtheorem{theorem}{Theorem}[section]
\newtheorem{lemma}[theorem]{Lemma}
\newtheorem{proposition}[theorem]{Proposition}

\theoremstyle{definition}

\theoremstyle{remark}
\newtheorem{remark}[theorem]{Remark}

\numberwithin{equation}{section}



\begin{document}
	\title
	{Quasi-Einstein manifolds with Harmonic Weyl curvature}
	
	\author{Huai-Dong Cao, Fengjiang Li$^{\dag}$ and James Siene}
	\address{Department of Mathematics\\ Lehigh University\\
		Bethlehem, PA 18015, USA.} 
	\email{huc2@lehigh.edu}
	
	\address{Mathematical Science Research Center\\ Chongqing  University of Technology\\ Chongqing, 400054, P.R. China.}
	\email{fengjiangli@cqut.edu.cn}
	
	\address{Department of Mathematics\\ Lehigh University\\
		Bethlehem, PA 18015, USA.} 
	\email{jts614@lehigh.edu}
	
 \thanks{$^{\dag}$Research partially supported by the China Scholarship Council (No.201906140158),  National Natural Science Foundation of China (No.12301062) and Natural Science Foundation of Chongqing (No.CSTB2024NSCQ-MSX0537)}

\begin{abstract}
	In this paper, we classify $n$-dimensional ($n\geq 5$) quasi-Einstein manifolds with harmonic Weyl curvature, thus extending the work of Shin \cite{Shin} in dimension four for quasi-Einstein manifolds and refining the work of He-Petersen-Wylie \cite{HPW}. As a consequence, we provide new examples of quasi-Einstein manifolds which are neither locally conformally flat nor D-flat in the sense of \cite{CC12}.
\end{abstract}

\maketitle
\date{}

\section{Introduction}

A \textit{quasi-Einstein}  manifold, as defined in Case-Shu-Wei \cite{CSW},  consists of a Riemannian manifold $(M^n,g)$ and a smooth function $f$ satisfying the equation
\begin{equation}\label{qe}
 Ric +\nabla^2f - \frac{1}{m} df \otimes df  = \lambda g.
\end{equation}
where, $m\neq 0$ and $\lambda$ are constants, and $\nabla^2f$ denotes the Hessian of $f$.  Clearly, when $f$ is constant, a quasi-Einstein manifold reduces to an Einstein manifold. In this case, it is called a trivial quasi-Einstein manifold. 

The symmetric 2-tensor  
       \[ Ric^m_f := Ric + \nabla^2f - \frac{1}{m} df \otimes df\] 
was introduced in \cite{Qian} and is called the \emph{m-Bakry-\'Emery tensor}, which extends the usual Bakry-\'Emery tensor $Ric_f := Ric + \nabla^2f$ \cite{BE}.
This tensor serves as a natural analogue of the Ricci tensor on smooth metric measure spaces and appears in the Bochner inequality for the drift Laplacian $\Delta_f:=\Delta -\nabla f \cdot \nabla $;  see  Qian \cite{Qian} and Wei-Wylie \cite{WW} for further details.

Alternatively, by setting 
$w:=e^{-f/m}$ as in \cite{CSW}, the quasi-Einstein equation \eqref{qe} can be written as
\begin{equation*}\label{qe-w}
\nabla^2\ w = \frac{w}{m} ( {Ric} - \lambda g), \qquad m\neq 0.
\end{equation*}
Such an equation appeared, for instance, in Besse \cite{Besse}. In fact, when $m>1$ is a positive integer, it follows from the work of Kim-Kim \cite{KK} that quasi-Einstein manifolds correspond to warped product Einstein manifolds. More precisely, they are exactly those $n$-dimensional manifolds which are the base of an $n+m$ dimensional Einstein warped product, i.e.,  there exists an Einstein manifold $(F^m, g_F)$, with Einstein constant $\mu$ for certain $\mu$, such that $(M \times F^m, g + e^{-2f/m}g_F)$ is an Einstein manifold with Einstein constant $\lambda$; for details, see \cite[Corollary 9.107]{Besse}, \cite{KK} and \cite{CSW}. 
For this reason,  quasi-Einstein manifolds are also referred to as \textit{$\left( \lambda ,n+m\right) $-Einstein manifolds}.  We note several special cases:
\begin{itemize}

\item[{$\bullet$}] When $m=1$, $(\lambda, n+1)$-Einstein metrics are commonly called \emph{static metrics}; see, e.g.,  \cite{Anderson, AndersonKhuri, Besse, Corvino, Israel}.

\smallskip
\item[{$\bullet$}] When $m = 2$, a quasi-Einstein manifold corresponds to a {\it static vacuum near horizon geometry} in the physics literature; see, e.g., \cite{BGKW} and \cite{Wylie 2023}.

\smallskip
\item[{$\bullet$}]  In the extended case of $m = \infty$, a quasi-Einstein manifold is simply a gradient Ricci soliton; see, e.g., \cite{Cao2010}.

\smallskip
\item[{$\bullet$}] 
When $m=2-n$, any $(\lambda, 2)$-Einstein manifold $(M^n,g,f)$ is known to be conformal to an Einstein manifold \cite{Brinkmann, JW}. Moreover, it is globally conformally equivalent to a space form \cite {CMMR}. 

\end {itemize}

Examples of non-trivial quasi-Einstein manifolds with $\lambda  < 0$ and  $\lambda = 0$ can be found in  \cite[Chapter 9]{Besse}. Non-trivial examples with $\lambda > 0$ and $m > 1$ were constructed in Lu-Page-Pope 
\cite[Section 5]{LPP}. Furthermore, $\lambda > 0$ necessarily implies that $M^n$ is compact; see \cite[Theorem 5]{Qian}.

In recent years, motivated in part by the progress in gradient Ricci solitons, there has been growing interest in studying quasi-Einstein manifolds under various conditions on the Weyl tensor $W$. For example, for $m\neq 2-n$, Catino-Mantegazza-Mazzieri-Rimoldi \cite{CMMR} have proven that a complete \emph{locally conformally flat} quasi-Einstein manifold of dimension $n\geq 3$ is locally a warped product with $(n-1)$-dimensional fibers of constant sectional curvature around any regular point of $f$.  Catino \cite{Ca} showed that a {\it generalized} quasi-Einstein manifold, for which both $m$ and $\lambda$ could be functions on $M^n$, with \emph{harmonic Weyl tensor} and $W(\nabla f,\cdot
,\cdot,\cdot)=0$ is locally a warped product with $(n-1)$-dimensional Einstein fibers around any regular point of $f$. Around the same time, He, Petersen and Wylie \cite{HPW} obtained 
both local and global classifications of warped product Einstein manifolds with\emph{ harmonic Weyl tensor} and $W(\nabla w, \cdot, \nabla w, \cdot) = 0$, or equivalently, $W(\nabla f,\cdot,\nabla f,\cdot)=0$. In addition,  
Chen-He \cite{CH} studied compact $n$-dimensional ($n\geq 4$) compact Bach-flat quasi-Einstein manifolds;  see also Huang-Wei \cite{HW} for an extension.  

It is worth noting that,  for quasi-Einstein manifolds,  having a \emph{harmonic Weyl tensor} (i.e., vanishing Cotton tensor) and $W(\nabla f,\cdot,\cdot,\cdot)=0$ implies the vanishing of the $D$-tensor first introduced in \cite{CC13} (see also \cite{CC12}),
$$\frac {m+n-2} {m}D_{ijk}=C_{ijk}-W_{ijkl}\nabla_l f,$$
where $C_{ijk}$ and $W_{ijkl}$ denote the Cotton tensor and  the Weyl tensor, respectively. When $m=\infty$, this tensor is precisely the $D$-tensor for gradient Ricci solitons.

In 2017, Shin \cite{Shin} further investigated a special class of generalized quasi-Einstein manifolds, called $(m,\rho)$-quasi-Einstein manifolds as in \cite{HW},  which satisfy the equation
$Ric^m_f=(\rho R+\lambda)g, \ \rho, \lambda\in {\mathbb R},$ where $R$ denotes the scalar curvature of $(M^n, g)$ and $m\in {\mathbb R}\setminus\{0, \pm\infty\}$.  
Building on the work of Kim \cite{Kim} on $4$-dimensional gradient Ricci solitons, Shin \cite{Shin} classified $4$-dimensional $(m,\rho)$-quasi-Einstein manifolds with harmonic Weyl curvature. However, it has remained open whether Shin's result can be extended to all dimensions $n\geq 5$ for $(m,\rho)$-quasi-Einstein manifolds, or quasi-Einstein manifolds.

In this paper, based in part on very recent progress on the (local) classification of gradient Ricci solitons with harmonic Weyl curvature \cite{Li}, \cite{Kim2} and \cite{BHS2025}, we study $n$-dimensional $(n\geq5)$ quasi-Einstein manifolds with harmonic Weyl curvature and provide both local and global characterizations of such manifolds; see Theorem \ref{local} and Theorem \ref{complete'}, respectively. In particular, our results extend Shin's work \cite{Shin} to dimensions $n\geq 5$ and refines the earlier results of Catino \cite{Ca} and He-Petersen-Wylie \cite{HPW} for quasi-Einstein manifolds. 

\begin{theorem}\label{complete'}
	Let $(M^n, g, f)$, $n\geq 5$, be an $n$-dimensional, non-trivial, complete  quasi-Einstein manifold satisfying \eqref{qe} with harmonic Weyl curvature and $m>1$ ($m\neq \infty$). Then, it is one of the following types:

	\begin{enumerate}
		\item[{\rm (1)}] $(M^n, g, f)$ is 
a quotient of some warped product quasi-Einstein manifold of the form
		\[
		\left(\mathbb{R} ,\, ds^2 \right) \times\, _h\left(N^{n-1}, \bar{g} \right), 
		\]
		where $\left(N^{n-1}, \bar{g} \right) $ is an Einstein manifold and $f=f(s)$ is a function on $\mathbb{R}$.
		
		\smallskip
		\item[{\rm (2)}] $\lambda<0$ and $(M^n,g)$ is isometric to a quotient of  the       
                        Riemananian product 
\[ \left(M_1^{k}, {g_1}\right)\times\left(M_2^{n-{k}}, {g_2}\right), \qquad 2\leq {k}\leq n-2, \] 
where  $\left(M_2^{n-{k}}, {g_2}\right) $ is an Einstein manifold with Einstein constant $\lambda$; 
$\left(M_1^{k}, {g_1}\right)$ $=\left(\mathbb{R}\times F^{{k}-1}, \ ds^{2}+ e^{2\sqrt{-\Lambda}s}g_{F}\right)$, for $\Lambda=\frac{1}{m+{k}-1}\lambda$ and $F^{{k}-1}$ ($3\leq {k}\leq n-2$) being Ricci-flat; and $f =-m{\sqrt{-\Lambda}s}$. Furthermore, $\left(M_1^{k}, {g_1}, f\right)$ is a (D-falt) quasi-Einstein manifold satisfying \eqref{qe} which is also an Einstein manifold with Einstein constant $\rho=\frac{{k}-1}{m+{k}-1}\lambda$. 

          \smallskip
          \item[{\rm (3)}] 
	$\lambda<0$ and $(M^n,g)$ is isometric to a quotient of  the Riemananian product 
\[ \left(\mathbb{H}^{{k}}, g_{\mathbb{H}}\right)\times\left(M_2^{n-{k}}, {g_2}\right), \qquad 2\leq {k}\leq n-2, \] 
where  $\left(M_2^{n-{k}}, {g_2}\right) $ is an Einstein manifold with Einstein constant $\lambda$;  $\left(\mathbb{H}^{{k}}, g_{\mathbb{H}}\right)$ is the hyperbolic space of constant sectional curvature $\Lambda=\frac{1}{m+{k}-1}\lambda$, with the metric $g_{\mathbb{H}}=ds^{2}+ \sqrt{-\Lambda} \sinh^2(\sqrt{-\Lambda} s) g_{\mathbb{S}^{k-1}}$; and $f= -m\log(\cosh \sqrt{-\Lambda}s)$.   Furthermore, $\left(\mathbb{H}^{{k}}, g_{\mathbb{H}}, f\right)$ is a (D-falt) quasi-Einstein manifold satisfying \eqref{qe}.  
\end{enumerate}
\end{theorem}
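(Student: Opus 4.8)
The plan is to derive this global classification from a local structure theorem (Theorem \ref{local}, not reproduced here but asserted in the introduction) together with completeness and a careful analysis of the warping/conformal factor. The starting point is the standard machinery for quasi-Einstein manifolds with harmonic Weyl curvature: from the quasi-Einstein equation \eqref{qe} and the vanishing of the Cotton tensor one shows that the $D$-tensor $C_{ijk}-W_{ijkl}\nabla_l f$ and, in conjunction with the extra curvature hypotheses that fall out of harmonicity of $W$, the full condition $W(\nabla f,\cdot,\cdot,\cdot)=0$ holds. Then, around any regular point of $f$, the results of Catino \cite{Ca} (or He--Petersen--Wylie \cite{HPW}) give that $(M^n,g)$ is locally a warped product $(I,ds^2)\times_h (N^{n-1},\bar g)$ with $N^{n-1}$ Einstein, $f=f(s)$, and the warping function $h$ and $f$ satisfying a coupled ODE system coming from \eqref{qe}. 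The first main step is therefore to record this ODE system explicitly: writing $h'/h$ and $f'$ in terms of $s$, the $ss$-component and the fibre components of \eqref{qe} become two ODEs, and the Einstein constant $\mu$ of the fibre is determined by a first integral.

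The second step is to integrate this ODE system and enumerate the cases. This is where the work of \cite{Li}, \cite{Kim2}, \cite{BHS2025} on gradient Ricci solitons is used as a template: the qualitative behavior of solutions is governed by the sign of $\lambda$ and by whether $h'$ ever vanishes. When $h$ is nowhere critical we stay in the genuinely warped case, which after completeness and the analysis of the maximal interval $I$ gives type (1). When $h$ has a critical point, the first integral forces the fibre Einstein constant and one finds that the metric splits: either $h$ is (up to scaling) an exponential $e^{\sqrt{-\Lambda}s}$, in which case $N^{n-1}$ itself further decomposes as a Ricci-flat warped factor $F^{k-1}$ times an Einstein factor $M_2^{n-k}$ and one lands in type (2); or $h=\sinh(\sqrt{-\Lambda}s)$ type behavior occurs and, using that a complete simply connected warped product $ds^2+\sinh^2 g_{\mathbb{S}^{k-1}}$ is hyperbolic space, one gets the $\mathbb{H}^k$ factor of type (3). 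In each case one must verify that the resulting product is globally a Riemannian product of the stated factors (not merely locally), and one must check that the $M_1^k$ or $\mathbb{H}^k$ factor is itself quasi-Einstein and in fact Einstein with the stated constant $\rho$ — this is a direct substitution into \eqref{qe} once $h$ and $f$ are known explicitly, together with the identity relating $\lambda$, $\rho$, $m$ and $k$.

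The third step is the globalization: passing from the local warped-product description near a regular point to the assertion that the \emph{entire} complete manifold is a quotient of one of the three models. Here one argues that the set of critical points of $f$ has empty interior (else $f$ is locally constant, contradicting non-triviality after a connectedness argument), extends the local ODE solution to the maximal interval, and uses completeness of $g$ to conclude that $I=\mathbb{R}$ in case (1) and that the relevant warped factor is a complete space form or homogeneous model in cases (2) and (3); the ``quotient'' language absorbs the non-simply-connected possibilities. One also needs the dimension restriction $n\geq 5$ (and correspondingly $3\leq k\leq n-2$) to guarantee enough room for the secondary splitting of $N^{n-1}$ and to invoke the cited solitons results, which are stated in that range.

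The main obstacle I expect is the \textbf{case analysis in the second step}: showing that when $h$ has a critical point the fibre $N^{n-1}$ must itself split as a product (rather than remaining an irreducible Einstein manifold), and pinning down exactly which Einstein constants and which explicit profiles $h$, $f$ are compatible with completeness. This requires combining the first integral of the ODE system with the second Bianchi identity / harmonic Weyl condition restricted to the fibre, and it is the point where the quasi-Einstein case genuinely differs from the Ricci soliton case because of the $-\tfrac{1}{m}df\otimes df$ term. A secondary technical point is ruling out spurious incomplete or singular solutions of the ODE so that only the three listed families survive.
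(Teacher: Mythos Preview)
Your proposal has a genuine gap at its very first step. You assert that harmonic Weyl curvature, via the vanishing Cotton tensor, forces $W(\nabla f,\cdot,\cdot,\cdot)=0$ (equivalently, $D$-flatness), and then invoke \cite{Ca} or \cite{HPW} to get a local warped product with an $(n-1)$-dimensional Einstein fibre. This implication is \emph{false}: types (2) and (3) in the theorem you are trying to prove are, as the paper explicitly remarks, examples of quasi-Einstein manifolds with harmonic Weyl curvature that are \emph{not} $D$-flat. So the reduction to a single warped product over an Einstein $N^{n-1}$ is not available in general, and your subsequent ODE analysis of $(h,f)$ never gets off the ground in the non-$D$-flat cases. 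Your picture of the splitting in types (2) and (3) as arising from ``$h$ having a critical point'' followed by a secondary decomposition of the fibre is not how these cases actually arise.

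The paper's route is structurally different. From harmonic Weyl one only gets that the Schouten tensor is Codazzi; Derdzi\'nski's lemma then yields, on the dense open set $M_A\cap\{\nabla f\neq 0\}$, a \emph{multiply} warped product adapted to the Ricci eigenspace decomposition (Theorem~\ref{mulwar}), together with a system of integrability conditions \eqref{3.11}--\eqref{3.14}. The bulk of the work (Sections~3--4) is an algebraic/ODE analysis of these conditions showing that among $\lambda_2,\dots,\lambda_n$ there are at most two distinct values, and then sorting the two-value case according to whether one of the functions $X,Y$ vanishes. This produces the local trichotomy (a)/(b)/(c) of Theorem~\ref{local}. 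For the global statement, case~(c) is eliminated not by an ODE singularity argument but by computing the Kim--Kim constant $\mu$, observing $\mu=0$, and invoking the rigidity results of Case \cite{case} and He--Petersen--Wylie \cite{HPW} for $\lambda\ge 0$, $\mu\le 0$. Case~(a) is the $D$-flat situation and globalizes via \cite{HPW}; case~(b) globalizes to a Riemannian product whose first factor is a $\rho$-Einstein quasi-Einstein manifold, classified by Case--Shu--Wei (Proposition~\ref{propkappaEinstein'}), yielding exactly types (2) and (3). None of this is captured by assuming a single warped product from the outset.
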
	

\begin{remark} 
	In Theorem \ref{complete'}, quasi-Einstein manifolds of types (2) and (3) provide new examples of neither locally conformally flat nor $D$-flat ones. 
\end{remark}	

\begin{remark} 
Theorem \ref{complete'} is derived from a local classification theorem (Theorem~\ref{local}). 
Moreover, there is a version of Theorem \ref{complete'} in which we allow $M^n$ to have a boundary; see Section 5 for details. 
\end{remark}

\begin{remark} 
	By similar arguments used in this paper, one can also obtain a local classification of  $(m,\rho)$-quasi-Einstein metrics with harmonic Weyl curvature.
\end{remark}


\noindent {\bf Organization of the Paper.} 
In Section 2, we fix our notation and recall some basic facts from Riemannian geometry and quasi-Einstein manifolds. By adopting and extending the arguments used in \cite{BHS2025} for gradient Ricci solitons, we then derive the local multiply warped product structure of quasi-Einstein manifolds with harmonic Weyl curvature.
In Section 3, we show that the Ricci tensor has at most three mutually distinct eigenvalues and provide a local classification of quasi-Einstein manifolds with harmonic curvature when the Ricci tensor is very special. This latter case corresponds precisely  to the $D$-flat case. In Section 4, we analyze the remaining situations and characterize their local structures.  Finally, in Section 5, we state the local classification theorem (Theorem \ref{local}) for quasi-Einstein manifolds with harmonic Weyl curvature and discuss the global classifications.


\section{Preliminaries}

In this section, we fix our notation and recall some basic facts
and known results about Riemannian manifolds 
and general quasi-Einstein manifolds. Moreover, we describe some useful features of quasi-Einstein manifolds with harmonic Weyl curvature that will be used in later sections. 

\subsection{Notations and basics of  Riemannian geometry}

First of all, we recall that on any $n$-dimensional Riemannian
manifold $(M^n, g_{ij} )$ ($n\ge 3 $) the Weyl conformal curvature tensor is given by
\begin{equation*}\label{2.15}
W_{ijkl}:=R_{ijkl}-\frac{1}{n-2}
(A_{ik}g_{jl}+A_{jl}g_{ik}-
A_{il}g_{jk}-A_{jk}g_{il}),
\end{equation*}
where $A=A_{ij}$ is the Schouten tensor given by 
\begin{equation}\label{2.14}
	A_{ij}:=R_{ij}-\frac{1}{2(n-1)}R g_{ij}.
\end{equation}

As is well-known, the Weyl tensor $W$ is identically zero for every 3-manifold and, 
for $n\geq 4$, the vanishing of the Weyl tensor is equivalent to the locally
conformal flatness of $(M^n,g)$. We also recall that, in dimension $n=3$, 
$(M^3, g)$ is locally conformally flat if and only if its Cotton tensor $C$, defined by 
\begin{equation}\label{2.16}
C_{ijk}=\nabla_i A_{jk}-\nabla_j A_{ik},
\end{equation}
vanishes. Furthermore, for $n\geq 4$,  
the Cotton tensor can also be defined as a divergence of the Weyl tensor:
\begin{equation*}\label{2.17}
C_{ijk}=-\frac{n-2}{n-3}  \sum_l\nabla^l W_{ijkl}.
\end{equation*}

\subsection{Basic facts and identities for quasi-Einstein manifolds}  
\begin{lemma}  
	Let $(M^n, g, f)$ be a quasi-Einstein manifold satisfying \eqref{qe}, where $m\notin{\{\pm1,\,(2-n),\, \infty\}}$. Then, we have
	\begin{equation*}\label{2.21}
	R+ \Delta f-\frac 1 m |\nabla f|^2 =n\lambda,
	\end{equation*}
	\begin{equation*}\label{2.22}
\nabla R= \frac {2(m-1)} {m}Ric (\nabla f)+\frac 2 m [R-(n-1)\lambda] \nabla f
\end{equation*}
	and
	\begin{equation}\label{2.23}
\nabla  \left[R+ \frac {(m-1)} {m}|\nabla f|^2 -2\lambda  f \right]= \frac 2 m (|\nabla f|^2-\Delta f)\nabla f.
\end{equation}
\end{lemma}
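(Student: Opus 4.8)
The plan is to derive these three identities directly from the quasi-Einstein equation \eqref{qe} by taking traces and divergences, exactly in the spirit of the standard Bakry--\'Emery identities, and then recombining them. First I would trace \eqref{qe} with respect to $g$ to get $R + \Delta f - \frac{1}{m}|\nabla f|^2 = n\lambda$, which is the first identity; no contracted second Bianchi is even needed here.

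For the second identity, I would take the divergence of \eqref{qe}. Writing \eqref{qe} as $R_{ij} + \nabla_i\nabla_j f - \frac{1}{m}\nabla_i f\,\nabla_j f = \lambda g_{ij}$, apply $\nabla^i$ and use: the contracted second Bianchi identity $\nabla^i R_{ij} = \tfrac12 \nabla_j R$; the commutation formula $\nabla^i\nabla_i\nabla_j f = \nabla_j \Delta f + R_{jk}\nabla^k f$ (Bochner-type); and the product rule $\nabla^i(\nabla_i f\,\nabla_j f) = (\Delta f)\nabla_j f + \nabla^i f\,\nabla_i\nabla_j f$. This produces an expression involving $\nabla R$, $Ric(\nabla f)$, $\nabla(\Delta f)$ and $\nabla^2 f(\nabla f,\cdot)$. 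The last two Hessian-type terms are eliminated by differentiating the first (traced) identity and using \eqref{qe} once more to replace $\nabla_i\nabla_j f$ by $\lambda g_{ij} - R_{ij} + \frac1m \nabla_i f\nabla_j f$; after collecting terms one gets $\nabla R = \frac{2(m-1)}{m} Ric(\nabla f) + \frac{2}{m}[R - (n-1)\lambda]\nabla f$. This is the only place where a genuine computation is required, and keeping track of the coefficient bookkeeping (especially the $\frac1m$ terms coming from the $df\otimes df$ piece) is the main obstacle; everything else is formal.

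For the third identity, I would start from the left-hand side $\nabla\big[R + \tfrac{m-1}{m}|\nabla f|^2 - 2\lambda f\big]$ and expand: $\nabla\big(\tfrac{m-1}{m}|\nabla f|^2\big) = \tfrac{2(m-1)}{m}\nabla^2 f(\nabla f,\cdot)$, and $\nabla(2\lambda f) = 2\lambda\nabla f$. Substituting the second identity for $\nabla R$ and then using \eqref{qe} in the form $\nabla^2 f(\nabla f,\cdot) = \lambda\nabla f - Ric(\nabla f) + \tfrac1m|\nabla f|^2\nabla f$ to rewrite the Hessian term, the $Ric(\nabla f)$ contributions cancel, the $\lambda\nabla f$ contributions cancel against $-2\lambda\nabla f$, and one is left with $\tfrac{2}{m}(|\nabla f|^2 - \Delta f)\nabla f$ after also invoking the first identity to trade $R - n\lambda$ for $\tfrac1m|\nabla f|^2 - \Delta f$. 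Thus the third identity follows algebraically from the first two together with \eqref{qe}, with no new differentiation needed; the hypothesis $m \notin \{\pm 1, 2-n, \infty\}$ is used only to ensure the coefficients appearing are well-defined and that the manipulations (e.g.\ the $w = e^{-f/m}$ reformulation, if invoked) make sense.
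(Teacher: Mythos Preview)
Your approach is correct and is precisely the standard derivation of these identities. The paper does not actually supply a proof of this lemma: it is stated in Section~2.2 as a basic fact about quasi-Einstein manifolds, with the identities well known in the literature (see, e.g., Case--Shu--Wei). Your outline---trace \eqref{qe} for the first identity; take the divergence of \eqref{qe}, use the contracted second Bianchi identity and the Ricci commutation formula, then eliminate $\nabla(\Delta f)$ and $\nabla^2 f(\nabla f,\cdot)$ via the traced equation and \eqref{qe} itself for the second; and combine the first two algebraically with \eqref{qe} for the third---is exactly how one proves these, and your coefficient bookkeeping checks out. One minor remark: the exclusions $m\notin\{\pm1,\,2-n\}$ are not actually needed for these three identities (only $m\neq 0,\infty$ is); they are imposed in the paper for reasons arising later.
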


 \begin{remark}
When $m=\infty$, the above two equations coincide with the well-known identities for  Ricci solitons: 
\[
\nabla R=2Ric(\nabla f) \quad \mbox {and}  \] 
\[ R+|\nabla f|^2-2\lambda f=C.
\]
 \end{remark}

\medskip
On any quasi-Einstein manifold  $(M,g, f)$, also known as an $(\lambda, n+m)$-Einstein manifold, we recall the $D$-tensor first introduced in \cite{CC12} (see also \cite{CC13}),  
\begin{equation*}\label{2.24}
D_{ijk} = \frac{1} {n-2} (A_{ij} \nabla_kf- A_{ik} \nabla_jf) + \frac {1} {(n-1)(n-2)} (g_{ij}E_{kl} - g_{ik}E_{jl})\nabla_lf,
\end{equation*}
where $E_{ij}=R_{ij}-\frac{R}{2}g_{ij}$ is the Einstein tensor.
This 3-tensor $D_{ijk}$ is closely tied to the Cotton tensor and the Weyl tensor, and played a significant role in \cite{CC12} and  \cite{CC13} on classifying locally conformally flat gradient steady Ricci solitons and Bach flat shrinking Ricci solitons.

\begin{lemma}[Chen-He \cite{CH}]
	Let $(M^n, g, f)$, $n\geq 4$, be an $n$-dimensional quasi-Einstein manifold satisfying \eqref{qe}, where $m\notin{\{\pm1,\,(2-n),\, \infty\}}$. Then, we have 
	\begin{equation}\label{2.26}
	\frac{m+n-2}{m}D_{ijk}=C_{ijk}+R_{ijkl}\nabla_l f.
	\end{equation}
\end{lemma}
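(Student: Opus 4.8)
The plan is to establish \eqref{2.26} by a direct computation, using only the quasi-Einstein equation \eqref{qe}, the definitions \eqref{2.14}--\eqref{2.16} of the Schouten and Cotton tensors, and the first-order identities of the previous Lemma (so the same restriction $m\notin\{\pm1,\,2-n,\,\infty\}$ is in force). First I would rewrite \eqref{qe} as a formula for the Hessian, $\nabla_j\nabla_k f=\lambda g_{jk}-R_{jk}+\tfrac1m\nabla_j f\,\nabla_k f$, and hence express the Schouten tensor as
\[
A_{jk}=\Big(\lambda-\tfrac{R}{2(n-1)}\Big)g_{jk}-\nabla_j\nabla_k f+\tfrac1m\nabla_j f\,\nabla_k f .
\]

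Substituting this into $C_{ijk}=\nabla_iA_{jk}-\nabla_jA_{ik}$ and differentiating, the third-derivative piece $-(\nabla_i\nabla_j\nabla_k f-\nabla_j\nabla_i\nabla_k f)$ becomes a curvature term $R_{ijkl}\nabla_l f$ by the Ricci commutation identity, while the $(\nabla^2f)(\nabla f)$ cross-terms produced by the quadratic piece $\tfrac1m df\otimes df$ partly cancel by symmetry of the Hessian; re-inserting the Hessian formula once more into the surviving ones makes the $\tfrac1{m^2}$ contributions cancel as well. This reduces $C_{ijk}$ to an expression involving $R_{ijkl}\nabla_l f$, the terms $R_{ik}\nabla_j f-R_{jk}\nabla_i f$ and $g_{ik}\nabla_j f-g_{jk}\nabla_i f$, and the terms $g_{jk}\nabla_i R-g_{ik}\nabla_j R$ coming from differentiating $\tfrac{R}{2(n-1)}g$.

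Next I would eliminate $\nabla R$ using the identity $\nabla_i R=\tfrac{2(m-1)}{m}R_{il}\nabla_l f+\tfrac2m[R-(n-1)\lambda]\nabla_i f$ from the preceding Lemma; after this substitution the $\lambda$-terms cancel, leaving $C_{ijk}+R_{ijkl}\nabla_l f$ written purely in terms of $R_{il}\nabla_l f$, $R\,g\otimes\nabla f$, $Ric\cdot\nabla f$ and $g\otimes\nabla f$. Finally, expanding the definition of $D_{ijk}$ via $A_{ij}=R_{ij}-\tfrac{R}{2(n-1)}g_{ij}$ and $E_{ij}=R_{ij}-\tfrac R2 g_{ij}$, one checks term by term that this expression coincides with $\tfrac{m+n-2}{m}D_{ijk}$; as a consistency check, letting $m\to\infty$ recovers the Cao--Chen identity $D_{ijk}=C_{ijk}+R_{ijkl}\nabla_l f$ for gradient Ricci solitons.

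The computation is elementary; the only delicate point is bookkeeping — keeping the sign in the Ricci commutation identity and the index order of $R_{ijkl}$ consistent with the conventions fixed in \S2, and reorganizing the $\nabla f$-linear and $Ric(\nabla f)$-linear terms so that the weights $\tfrac1{n-2}$ and $\tfrac1{(n-1)(n-2)}$ of the $D$-tensor, together with the overall factor $\tfrac{m+n-2}{m}$, come out exactly. That factor itself arises from combining the $\tfrac1{n-2}$-type coefficients produced by the Schouten/Einstein-tensor structure with the $\tfrac{m-1}{m}$ coefficient introduced when $\nabla R$ is replaced via the previous Lemma.
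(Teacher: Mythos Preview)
The paper does not give its own proof of this lemma; it simply attributes the identity to Chen--He \cite{CH} and states it without argument. Your outline --- rewriting $\nabla^2 f$ via \eqref{qe}, inserting this into the Cotton tensor, applying the Ricci commutation identity to the third-order terms, and then eliminating $\nabla R$ via the first-order identity of the preceding lemma --- is the standard direct computation by which such formulas are established, and it is correct; only the sign/coefficient bookkeeping you yourself flag needs care.
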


\medskip

The following useful result is essentially parallel to that in gradient Ricci solitons with harmonic Weyl tensor, and can be easily obtained by using equations \eqref{2.14}, \eqref{2.16} and \eqref{2.23}; see, e.g., \cite{HW} or \cite{Shin}.  

\begin{lemma}\label{lemma2.3}
	Let $(M^n, g, f)$, $n\geq 4$, be an $n$-dimensional quasi-Einstein manifold satisfying \eqref{qe} with harmonic Weyl curvature, i.e.,  $\delta W = \nabla^l W_{ijkl} =0$, and $m\notin{\{\pm1,\,(2-n),\, \infty\}}$.
	Then,  the Cotton tensor $C$ vanishes and 
 the Schouten tensor $A$ is Codazzi.
	Moreover, we have
	\begin{equation}
	\begin{aligned}\label{2.27}
R_{ijkl}\nabla_l f=&	\nabla_i\nabla_j \nabla_k f -\nabla_j\nabla_i \nabla_k f\\
			=&-\nabla_i R_{jk}+\nabla_j R_{ik}+\frac{1}{m}(R_{jk}\nabla_i f-R_{ik}\nabla_jf)\\
         &- \frac{\lambda}{m} \left(g_{jk}\nabla_i f  - g_{ik}\nabla_jf \right)\\
	=&\frac{1}{2(n - 1)} \left( g_{ik}\nabla_jR - g_{jk}\nabla_iR \right)+\frac{1}{m}(R_{jk}\nabla_i f-R_{ik}\nabla_jf)\\
       &- \frac{\lambda}{m} \left(g_{jk}\nabla_i f  - g_{ik}\nabla_jf \right)\\
	=&\frac{m-1}{m(n - 1)} \nabla_lf \left( R_{lj}g_{ik} - R_{li} g_{jk}\right)+\frac{1}{m}(R_{jk}\nabla_i f-R_{ik}\nabla_jf)\\
          &+ \frac{R}{m(n - 1)} \left( g_{ik}\nabla_jf - g_{jk}\nabla_if\right).
	\end{aligned}
	\end{equation}
\end{lemma}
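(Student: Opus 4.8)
The plan is to prove Lemma~\ref{lemma2.3} by combining the defining curvature identity from differentiating the quasi-Einstein equation with the fact that harmonic Weyl curvature forces the Cotton tensor to vanish. First I would record the commutation formula for third covariant derivatives: for any smooth $f$ one has $\nabla_i\nabla_j\nabla_k f - \nabla_j\nabla_i\nabla_k f = R_{ijkl}\nabla_l f$ (using the sign convention implicit in the Weyl tensor definition in this paper). Then I would take the covariant derivative of the quasi-Einstein equation \eqref{qe} in the form $\nabla_i\nabla_j f = \lambda g_{ij} - R_{ij} + \tfrac1m \nabla_if\nabla_jf$, apply $\nabla_k$, and antisymmetrize in $i,j$. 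The Hessian-of-$\tfrac1m(df\otimes df)$ terms are themselves symmetric enough that most cancel upon antisymmetrization; what survives is exactly $\tfrac1m(R_{jk}\nabla_if - R_{ik}\nabla_jf)$ together with $-\nabla_iR_{jk}+\nabla_jR_{ik}$ and $-\tfrac{\lambda}{m}(g_{jk}\nabla_if - g_{ik}\nabla_jf)$. This yields the second line of \eqref{2.27}.

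Next I would invoke the harmonic Weyl hypothesis. Since $n\ge 4$, $\delta W = 0$ is equivalent to $C_{ijk}=0$, i.e. the Schouten tensor $A$ defined in \eqref{2.14} is Codazzi: $\nabla_iA_{jk}=\nabla_jA_{ik}$. Expanding $C_{ijk}=\nabla_iR_{jk}-\nabla_jR_{ik}-\tfrac{1}{2(n-1)}(g_{jk}\nabla_iR - g_{ik}\nabla_jR)$ and setting it to zero lets me replace $-\nabla_iR_{jk}+\nabla_jR_{ik}$ by $-\tfrac{1}{2(n-1)}(g_{jk}\nabla_iR - g_{ik}\nabla_jR)$. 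Substituting into the second line of \eqref{2.27} produces the third line. For the final two lines I would use the second identity of the first Lemma in this subsection, namely $\nabla R = \tfrac{2(m-1)}{m}Ric(\nabla f)+\tfrac2m[R-(n-1)\lambda]\nabla f$, to rewrite $\nabla_iR$ in terms of $R_{li}\nabla_lf$, $R\nabla_if$ and $\lambda\nabla_if$. Plugging this expression for $\nabla_iR$ and $\nabla_jR$ into the third line and collecting the $\lambda$-terms, the $R$-terms, and the $Ric(\nabla f)$-terms gives the fourth line; a final regrouping of the $R$ and $\lambda$ contributions yields the fifth line. (The statement that $C$ vanishes and $A$ is Codazzi is the standard consequence of $\delta W=0$ in dimension $\ge 4$, which I would cite from the references in the statement rather than reprove.)

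The only genuinely delicate point is bookkeeping: one must be consistent about the curvature sign convention (the paper's $W_{ijkl}$ formula fixes it), about which slot of $Ric$ is contracted with $\nabla f$, and about the coefficient $\tfrac{1}{2(n-1)}$ versus $\tfrac1{n-1}$ appearing when passing between $\nabla R$ in the Schouten/Cotton identities and $\nabla R$ in the scalar-curvature identity. I expect the main obstacle to be nothing conceptual but rather verifying that the antisymmetrization of $\nabla_k(\tfrac1m\nabla_if\nabla_jf)$ contributes precisely $\tfrac1m(\nabla_i\nabla_kf\,\nabla_jf - \nabla_j\nabla_kf\,\nabla_if)$ and that, after substituting the Hessian from \eqref{qe} again, the extra $\tfrac1m\nabla_if\nabla_jf\nabla_kf$ terms cancel and the remaining pieces combine into exactly $\tfrac1m(R_{jk}\nabla_if - R_{ik}\nabla_jf) - \tfrac\lambda m(g_{jk}\nabla_if - g_{ik}\nabla_jf)$ — i.e. that no stray terms survive. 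Since the excerpt itself notes this is "essentially parallel to" the gradient Ricci soliton case and can be obtained "easily" from \eqref{2.14}, \eqref{2.16}, \eqref{2.23}, I would present the derivation compactly, emphasizing the two substitution steps (Codazzi for $A$, then the scalar-curvature gradient identity) and leaving the index manipulations to the reader or to a short displayed computation.
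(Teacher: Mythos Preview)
Your proposal is correct and follows exactly the approach the paper indicates: the paper does not give a detailed proof but simply states the result ``can be easily obtained by using equations \eqref{2.14}, \eqref{2.16} and \eqref{2.23}'' with a reference to \cite{HW} and \cite{Shin}, and your outline (Ricci identity, differentiate \eqref{qe} and antisymmetrize, use $C_{ijk}=0$ to swap $\nabla_iR_{jk}-\nabla_jR_{ik}$ for the $\nabla R$ terms, then substitute the $\nabla R$ formula from the first Lemma) is precisely that computation spelled out. One small remark: the passage from the third displayed equality to the fourth is a single substitution of the $\nabla R$ identity, after which the two $\tfrac{\lambda}{m}$ terms cancel; there is no separate ``fourth line then fifth line'' regrouping needed.
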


\subsection{Local multiply warped product structure of quasi-Einstein manifolds with harmonic Weyl curvature}

First of all, following a similar argument as in the proof of Lemma 3.3 in \cite{CC12}, we have the following basic properties that are valid for quasi-Einstein manifolds with harmonic Weyl curvature; see \cite[Lemma 1]{Shin} for more details.

\begin{lemma}
Let $(M^n, g, f )$, $n \geq 3$, be a quasi-Einstein manifold  with harmonic Weyl
curvature and $m\neq 2-n$, $c$ be a regular value of $f$,  and 
$\Sigma_c=\{ x|\ f(x))=c\} $
be the   level    surface  of   $f$. Then, the following hold:

\begin{enumerate}
	\item[{\rm (i)}] Whenever $\nabla f\neq 0$, $E_1 := \frac{\nabla f}{|\nabla f|}$ is an eigenvector field of $Ric$.

\smallskip
\item[{\rm (ii)}] 
The scalar curvature $R$ and $|\nabla f |^2$ are constant on a connected component of $\Sigma_c$.

\smallskip
\item[{\rm (iii)}] 
There is a function $s$, locally defined by $s=\int \frac{df} {|\nabla f|}$, so that $ds =  \frac{df}{|\nabla f|}$ and $E_1 = \nabla s$.

\smallskip
\item[{\rm (iv)}] 
The special eigenvalue, $\lambda_1:=Ric(E_1, E_1)$, of the Ricci tensor is constant on a connected component of $\Sigma_c$. 

\smallskip
\item[{\rm (v)}] 
 $\nabla_{E_1} E_1 = 0$.

\end{enumerate}
\end{lemma}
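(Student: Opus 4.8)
The plan is to establish the five assertions essentially in the order they are listed, working pointwise near a regular value $c$ of $f$, and using the key structural fact from Lemma~\ref{lemma2.3} that the Schouten tensor $A$ is Codazzi and the curvature-Hessian identity \eqref{2.27}. First I would prove (i). Since $\delta W = 0$ gives $C_{ijk} = 0$, the Codazzi property $\nabla_i A_{jk} = \nabla_j A_{ik}$ is available. Contracting \eqref{2.27} (or its Codazzi consequence) with $\nabla f$ in appropriate slots, one finds that $R_{ijkl}\nabla^i f \nabla^l f$ together with the first Bianchi identity forces $\nabla^2 f$ (equivalently $Ric$, via \eqref{qe}) to preserve the line spanned by $\nabla f$: the off-diagonal component $Ric(\nabla f, v)$ for $v \perp \nabla f$ vanishes. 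Concretely, the quasi-Einstein equation rewrites $Ric(\nabla f) = \lambda\nabla f - \nabla^2 f(\nabla f) + \frac{1}{m}|\nabla f|^2\,\nabla f$, and since $\nabla^2 f(\nabla f) = \tfrac12\nabla|\nabla f|^2$, it suffices to show $\nabla|\nabla f|^2$ is parallel to $\nabla f$; this follows from \eqref{2.23} once one knows $R + \frac{m-1}{m}|\nabla f|^2 - 2\lambda f$ is a function of $f$ alone, which is the content of the next step. So I would in fact do (ii) essentially simultaneously with (i).

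For (ii), the identity \eqref{2.23} states $\nabla\!\left[R + \tfrac{m-1}{m}|\nabla f|^2 - 2\lambda f\right] = \tfrac{2}{m}(|\nabla f|^2 - \Delta f)\nabla f$, so this gradient is always proportional to $\nabla f$; hence the bracketed quantity, call it $\Phi$, is locally constant on connected components of each level set $\Sigma_c$. Now I use the harmonic-Weyl hypothesis more seriously: a standard computation (as in \cite{CC12}, \cite{Shin}) shows that on a quasi-Einstein manifold with $C = 0$, the scalar curvature $R$ itself is constant on connected components of $\Sigma_c$ — one differentiates the second identity of the Lemma in Section~2.2 tangentially and uses that $Ric(\nabla f)$ is already proportional to $\nabla f$ along $\Sigma_c$, or alternatively derives it directly from the Codazzi property of $A$ restricted to the level set. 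Granting $R$ constant on the component, $\Phi$ constant there forces $|\nabla f|^2$ constant there as well (since $f \equiv c$ on $\Sigma_c$). Then $\nabla|\nabla f|^2$ is normal to $\Sigma_c$, i.e. proportional to $\nabla f$, which closes the loop and completes both (i) and (ii). The main obstacle here is establishing that $R$ is constant on the connected components of the level sets purely from $C = 0$; this is where one must be careful to use the Codazzi property of the Schouten tensor together with \eqref{2.27}, rather than any stronger hypothesis — but it is exactly the computation carried out for gradient Ricci solitons and adapted in \cite{Shin}, so I expect it to go through with the extra $\frac{1}{m}$-terms tracked carefully.

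Parts (iii), (iv), (v) are then formal consequences. For (iii): on the open set where $\nabla f \neq 0$, since $|\nabla f|^2$ depends only on $f$ by (ii), define $s = \int \frac{df}{|\nabla f|}$ along integral curves of $\nabla f$; this is well-defined locally because $|\nabla f|$ is a function of $f$, and by construction $ds = \frac{df}{|\nabla f|}$, so $\nabla s = \frac{\nabla f}{|\nabla f|} = E_1$. For (iv): $\lambda_1 = Ric(E_1, E_1) = \frac{1}{|\nabla f|^2}\,Ric(\nabla f, \nabla f)$; using \eqref{qe} this equals $\frac{1}{|\nabla f|^2}\big(\lambda|\nabla f|^2 - \nabla^2 f(\nabla f, \nabla f) + \tfrac{1}{m}|\nabla f|^4\big)$, and $\nabla^2 f(\nabla f, \nabla f) = \tfrac12 \langle\nabla|\nabla f|^2, \nabla f\rangle$ is a function of $f$ alone by (ii), so $\lambda_1$ depends only on $f$, hence is constant on connected components of $\Sigma_c$. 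For (v): $\nabla_{E_1} E_1 = \nabla_{E_1}\!\left(\frac{\nabla f}{|\nabla f|}\right)$; expanding and using $\nabla^2 f(\nabla f) = \tfrac12\nabla|\nabla f|^2$ parallel to $\nabla f$ (from (ii)), the tangential part cancels and one is left with $\nabla_{E_1}E_1 = 0$ — equivalently, the integral curves of $E_1$ are unit-speed geodesics, which is immediate once $|\nabla f|^2$ is a function of $f$. None of (iii)--(v) presents any real difficulty; the entire weight of the proof sits in (i)--(ii), and specifically in extracting the constancy of $R$ along level sets from the vanishing of the Cotton tensor.
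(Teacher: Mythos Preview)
Your approach is essentially the standard one the paper invokes (it does not give its own proof, simply referring to \cite{CC12} and \cite[Lemma~1]{Shin}), and parts (iii)--(v) are handled correctly. However, the logical flow in your treatment of (i)--(ii) is circular as written, and you should tighten it. Your ``concrete'' route to (i) assumes $\nabla|\nabla f|^2\parallel\nabla f$, which you deduce from (ii); but your argument for $R$ constant on level sets (``differentiate the second identity tangentially and use that $Ric(\nabla f)\parallel\nabla f$'') in turn presupposes (i). The fallback you mention --- deriving constancy of $R$ ``directly from the Codazzi property of $A$ restricted to the level set'' --- is too vague to break the loop.

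The clean route is the one you sketch first but do not carry out: contract \eqref{2.27} with $\nabla^i f$, note the left side $R_{ijkl}\nabla^i f\nabla^l f$ is symmetric in $j,k$, and read off the antisymmetric part of the right side. Combining this with the identity $\nabla R=\tfrac{2(m-1)}{m}Ric(\nabla f)+\tfrac{2}{m}[R-(n-1)\lambda]\nabla f$ yields
\[
\frac{m+n-2}{m(n-1)}\bigl(\nabla_k f\,R_{jl}\nabla^l f-\nabla_j f\,R_{kl}\nabla^l f\bigr)=0,
\]
so the hypothesis $m\neq 2-n$ (which you never use) forces $Ric(\nabla f)\parallel\nabla f$, proving (i) outright. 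Then $\nabla R\parallel\nabla f$ follows from the displayed identity, and \eqref{2.23} gives $\nabla|\nabla f|^2\parallel\nabla f$, completing (ii). With (i) established first in this way, everything else goes through as you describe.
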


Next,  we explore the key feature that having harmonic Weyl curvature is equivalent to 
the Schouten tensor $A$ being a Codazzi tensor. In \cite[Lemma 2]{De}, Derdzi\'{n}ski showed the following useful property:  
for any Codazzi tensor $A$ and any point $x\in M$, let $E_A(x)$ be the number of distinct eigenvalues of $A_x$ (or $Ric$),
and set 
\[
M_A = \{  x \in M \ | \ E_A {\rm \ is \ constant \ in \ a \ neighborhood \ of \ } x \}.
\] 
Then $M_A$ is an open dense subset of $M$. Moreover, on each connected component of $M_A$, 
the eigenvalues of $A$ (or $Ric$)  are well-defined and differentiable functions; the corresponding eigenspaces of $A$ form mutually orthogonal differentiable distributions.

On the other hand, it is known that every quasi-Einstein manifold  $(M, g, f)$ is real analytic in harmonic coordinates; see \cite[Proposition 2.4]{HPW}. Thus, if $f$ is not a constant function, then the set of regular points $\{\nabla f \neq 0\}$ of $f$ is open and dense in $M$.  In particular, 
\[M_A \cap \{ \nabla f \neq 0  \}\subset M \] is an {open and dense} subset of  $M^n$. 

Hence, for each point $p\in M_A \cap \{ \nabla f \neq 0  \}$, there exists a neighborhood $U$ of $p$, 
such that the number of distinct eigenvalues of the Ricci tensor is constant on $U$. Suppose that, aside from $\lambda_1:=Ric(E_1, E_1)$, there are $m'$ distinct Ricci eigenvalues with multiplicities are $r_{1}, r_{2}, \cdots, r_{m'}$, respectively, such that $1+r_{1}+r_{2}+ \cdots+ r_{m'}=n $. Then, we can choose a local frame $\{E_1= \frac{\nabla f}{|\nabla f| }, E_2, \cdots,  E_n\}$ over $U$,
such that 
\[
R_{ij}=\lambda_i g_{ij}.
\]
Without loss of generality, we may assume that 
\[
\lambda_{2}=\cdots=\lambda_{r_1+1}, \
\lambda_{r_1+2}=\cdots=\lambda_{r_1+r_2+1},\
\cdots,  \  
\lambda_{r_1+r_2+\cdots+r_{m'-1}+2}=\cdots=\lambda_{n},
\]
and that $\lambda_{2},\, \lambda_{r_1+2},\,\cdots,\,	\lambda_{r_1+r_2+\cdots+r_{m'-1}+2}$
are distinct.

\begin{lemma} 
	Let $(M^n, g, f)$, $n\geq 4$, be an $n$-dimensional quasi-Einstein manifold satisfying \eqref{qe} with harmonic Weyl curvature and  $m\notin{\{\pm1,\,(2-n),\, \infty\}}$. 
	Let $\{E_1=\nabla f/|\nabla f|, E_2, \cdots, E_n\}$  be a frame consisting of eigenvector fields of the Ricci tensor $Ric$ in some neighborhood $U\subset M_A\cap\{\nabla f\neq 0\}$ of a regular level surface $\Sigma_c:=\{f=c\}$. Then, for each $1\le i\le n$,  the eigenvalue $\lambda_i$ is constant on any connected components of $\Sigma_c$, hence a function of $s$ only. 
\end{lemma}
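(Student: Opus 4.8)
The plan is to show that each Ricci eigenvalue $\lambda_i$ is constant along connected components of the level surface $\Sigma_c$ by exploiting the Codazzi property of the Schouten tensor $A$ together with the Derdzi\'nski structure recalled above. Since $A_{ij} = R_{ij} - \frac{R}{2(n-1)}g_{ij}$ and since $R$ is already known to be constant on connected components of $\Sigma_c$ by Lemma (ii), it suffices to prove that each eigenvalue of $A$ is constant on such components; equivalently, one works directly with the $\lambda_i$'s. On the neighborhood $U \subset M_A \cap \{\nabla f \neq 0\}$, the number of distinct eigenvalues is constant, so by Derdzi\'nski the eigenvalues are smooth functions and the eigendistributions are smooth, mutually orthogonal, and (for the multi-dimensional ones) integrable. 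The eigenvalue $\lambda_1 = Ric(E_1,E_1)$ associated to $E_1 = \nabla f/|\nabla f|$ is already handled by Lemma (iv).

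The main step is a computation with the Codazzi equation $\nabla_i A_{jk} = \nabla_j A_{ik}$. Fix a component of $\Sigma_c$ and a tangent direction $E_a$ to it, $a \geq 2$, lying in the eigenspace of some eigenvalue $\lambda_i$. I would compute $E_a(\lambda_i)$ by evaluating the Codazzi identity on the triple $(E_a, E_1, E_a)$ (or the appropriate index combination), i.e.\ comparing $\nabla_{E_a} A(E_1, E_a)$ with $\nabla_{E_1} A(E_a, E_a)$. Using $A(E_1, E_a) = 0$ (orthogonality of distinct eigenspaces, valid when $\lambda_i \neq \lambda_1$) and $A(E_a, E_a) = \lambda_i - \frac{R}{2(n-1)}$, expanding the covariant derivatives in terms of the connection coefficients, and invoking $\nabla_{E_1} E_1 = 0$ from Lemma (v) together with the fact that $E_1 = \nabla s$ is the unit normal, one extracts a relation of the form $E_a(\lambda_i) = (\lambda_i - \lambda_j)\langle \nabla_{E_a} E_1, \cdot\rangle$-type terms that must vanish. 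More cleanly: the tangential derivative of a Ricci eigenvalue along $\Sigma_c$ is controlled by the second fundamental form and the off-diagonal components of $A$, all of which the harmonic Weyl / Codazzi condition forces to behave so that $E_a(\lambda_i)=0$. Since $\Sigma_c$ is connected and $E_a$ ranges over a frame of $T\Sigma_c$, this gives that $\lambda_i$ is constant on the component, hence a function of $s$ alone (as $s$ is, up to reparametrization, the transverse coordinate and $f = f(s)$).

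The case $\lambda_i = \lambda_1$ (if the special eigenvalue coincides with one of the others on $U$) needs a separate but easier argument: then $\lambda_i = \lambda_1$ is already constant on $\Sigma_c$ by Lemma (iv). Also one should be careful that the computation is carried out at points where the chosen eigenvalue is genuinely distinct from $\lambda_1$, so that orthogonality $A(E_1, E_a) = 0$ holds; this is precisely where working inside $M_A$ matters. I expect the main obstacle to be the bookkeeping in expanding the Codazzi identity: one must correctly account for the Christoffel-type terms $\langle \nabla_{E_a} E_b, E_c\rangle$ within and across eigendistributions, and show that the terms not manifestly zero cancel using the symmetry of $\nabla^2 f$ along $\Sigma_c$ and the already-established facts (ii)--(v) of the previous lemma. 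Once the identity is organized correctly, the conclusion that $E_a(\lambda_i) = 0$ should follow essentially algebraically; this argument parallels the corresponding step for gradient Ricci solitons in \cite{BHS2025}, which I would follow closely.
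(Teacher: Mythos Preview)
Your proposal is correct and takes essentially the same approach as the paper, which likewise simply defers to the gradient Ricci soliton analogue (the paper cites \cite{Kim2}, \cite{Li}, and \cite{Shin} rather than \cite{BHS2025}, but the underlying Codazzi/Derdzi\'nski argument is the same). One small point: the Codazzi triple $(E_a, E_1, E_a)$ you single out computes $E_1(\lambda_a)$ rather than the tangential derivative you need, so the relevant triple is $(E_b, E_a, E_a)$ with $b\ge 2$, giving $E_b(\mu_a)=(\mu_a-\mu_b)\langle\nabla_{E_a}E_a,E_b\rangle$; but as you already anticipate (``or the appropriate index combination''), the correct choice together with the quasi-Einstein equation and the facts (ii)--(v) yields the vanishing exactly as in the soliton case.
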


\begin{proof} This is an extension of \cite[Lemma 3]{Shin}, by using essentially the same argument as in the proof of \cite[Lemma 2.5]{Kim2} and \cite[Lemma 3.3]{Li}. 	
\end{proof}

With these preparations done, by adopting and extending the arguments used in \cite{Li, BHS2025}, we proceed to obtain the local multiply warped product structure of $n$-dimensional quasi-Einstein manifold satisfying \eqref{qe} with harmonic Weyl curvature.

In the following and the rest of the paper, we shall use the following convention: for $2\leq a, \alpha\leq n$, let us denote by 
\[[a]=\{b \ \!| \ 2\leq b\leq n ~ {\rm and } ~ \lambda_{b}=\lambda_{a}\} \subset  \{2, \cdots, n\}\] 
and 
\[[\alpha]=\{\beta \ \!| \ 2\leq \beta \leq n ~ {\rm and } ~ \lambda_{\beta}=\lambda_{\alpha} \} \subset  \{2, \cdots, n\}\] 
such that $[a]\neq[\alpha]$. In particular, $\lambda_a$ and $\lambda_{\alpha}$ will always have distinct values. 

First, we state a very useful lemma from \cite[Lemma 7]{BHS2025} which is also valid for quasi-Einstein manifolds with harmonic Weyl curvature (with essentially the same proof). 

\begin{lemma}\label{bhs}
Let $(M^n, g, f)$, $n\geq 4$, be an $n$-dimensional quasi-Einstein manifold satisfying \eqref{qe} with harmonic Weyl curvature and $m\notin{\{\pm1,\,(2-n),\,\infty\}}$.  On $M_A \cap \{ \nabla f \neq 0  \}$, let $(x_{b})_{b\in[a]}$ and $(x_{\beta})_{\beta\in[\alpha]}$ be any local coordinate systems of the integral manifolds of the eigenspaces with corresponding Ricci eigenvalues  $\lambda_{a}$ and $\lambda_{\alpha}$, respectively.  Then, 
	setting $\partial_{1}:=E_{1}=\nabla f/|\nabla f|$, we have
	\begin{equation*}
		\partial_{1}g_{ab}=2\xi_{a}g_{ab}\ \ \ \ \text{and}\ \ \ \ \ \partial_\alpha g_{ab}=0,
	\end{equation*}
 where $\xi_{a}=\frac{\lambda-\lambda_{a}}{|\nabla f|}$.
\end{lemma}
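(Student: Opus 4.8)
\textbf{Proof proposal for Lemma \ref{bhs}.}

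The plan is to compute the Hessian of $f$ in the given adapted frame and read off the evolution of the metric components along $E_1$. First I would use part (v) of the previous lemma, namely $\nabla_{E_1}E_1 = 0$, together with the fact that $E_1 = \nabla f/|\nabla f|$ and $\nabla s = E_1$, to express the second fundamental form of the level sets $\Sigma_c$ in terms of the Hessian. Since $\nabla^2 f = (\lambda - \lambda_i) g_{ij} + \frac{1}{m} df\otimes df$ by \eqref{qe} and $R_{ij}=\lambda_i g_{ij}$, the restriction of $\nabla^2 f$ to directions tangent to $\Sigma_c$ is diagonal with $\nabla^2 f(E_a, E_b) = (\lambda - \lambda_a)g_{ab}$ when $[a]=[b]$ and $=0$ when $[a]\neq[b]$ (the $df\otimes df$ term drops out since $E_a, E_\alpha \perp \nabla f$). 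On the other hand, $\nabla^2 f (X,Y) = |\nabla f|\, \mathrm{II}(X,Y)$ for $X,Y$ tangent to $\Sigma_c$, where $\mathrm{II}$ is the second fundamental form with respect to $E_1$; and $\mathrm{II}(X,Y) = \tfrac12 (\mathcal{L}_{E_1} g)(X,Y)$ modulo the normal-direction issue, which is handled by $\nabla_{E_1}E_1=0$ so that the flow of $E_1$ is (locally, after reparametrizing by $s$) the normal geodesic flow. Putting these together gives $\tfrac12 \partial_1 g_{ab} = \xi_a g_{ab}$ with $\xi_a = (\lambda-\lambda_a)/|\nabla f|$ for $[a]=[b]$, which is the first claimed identity, and $\partial_1 g_{a\beta}=0$ when $[a]\neq[\beta]$ — but this last fact needs a little more care because it also involves the mixed block.

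For the second identity $\partial_\alpha g_{ab}=0$, the idea is to exploit the Codazzi property of the Schouten tensor (equivalently of $Ric$, up to the scalar-curvature term), which by Lemma \ref{lemma2.3} holds here. The Codazzi equation $(\nabla_X A)(Y,Z) = (\nabla_Y A)(X,Z)$ applied to eigenvectors from distinct eigendistributions controls the way one eigendistribution differentiates another: a standard consequence (this is exactly Derdziński's argument, \cite{De}) is that each eigendistribution of a Codazzi tensor with more than one eigenvalue is integrable and, moreover, that the connection coefficients $\Gamma^b_{\alpha a}$ mixing distinct eigendistributions are constrained. Concretely, one shows that for $a,b \in [a]$ and $\alpha \in [\alpha]$ with $[a]\neq[\alpha]$, the eigenvalue functions being constant along $\Sigma_c$ (the preceding unnamed lemma) forces $E_\alpha(\lambda_a) = 0$, and then the Codazzi identity together with $R_{ij}=\lambda_i g_{ij}$ yields $(\lambda_a - \lambda_\alpha)\langle \nabla_{E_\alpha} E_a, E_b\rangle$ symmetric in a way that, combined with metric compatibility $\partial_\alpha g_{ab} = \langle \nabla_{E_\alpha}E_a, E_b\rangle + \langle E_a, \nabla_{E_\alpha} E_b\rangle$, collapses to $\partial_\alpha g_{ab}=0$. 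I would organize this by first recording the relevant Christoffel symbols in the eigenframe and then citing/reproving the Derdziński-type vanishing; since the statement asserts the proof is "essentially the same" as \cite[Lemma 7]{BHS2025}, it suffices to indicate that the gradient-Ricci-soliton input used there (the soliton identity for $\nabla^2 f$) is replaced verbatim by the quasi-Einstein identity \eqref{qe}, with the extra $\tfrac1m df\otimes df$ term being tangentially invisible.

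The main obstacle I anticipate is the first identity's off-diagonal (mixed-block) part, i.e.\ showing $\partial_1 g_{a\beta} = 0$ when $[a]\neq[\beta]$ — equivalently that the flow of $E_1$ preserves the splitting into eigendistributions, not just the individual blocks. This does not follow merely from $\nabla^2 f$ being diagonal; it requires knowing that the eigendistributions are parallel along $E_1$, which in turn uses the Codazzi property once more (differentiating the relation $R_{ij}=\lambda_i g_{ij}$ along $E_1$ and using that $\lambda_i = \lambda_i(s)$ from the preceding lemma). A clean way around it: choose the frame $\{E_2,\dots,E_n\}$ to be parallel along the normal geodesics (i.e.\ $\nabla_{E_1}E_i=0$ for all $i$, possible since $\nabla_{E_1}E_1=0$), and then verify that this choice is compatible with the $E_i$ remaining eigenvectors of $Ric$ — which is precisely where $\lambda_i = \lambda_i(s)$ and harmonic Weyl curvature enter. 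Once the frame is so adapted, $\partial_1 g_{ij} = 2\,\nabla^2 f(E_i,E_j)/|\nabla f|$ directly, giving both $\partial_1 g_{ab}=2\xi_a g_{ab}$ and the vanishing of the mixed terms simultaneously, and I would present the argument in that order: (1) fix the normal-parallel eigenframe, (2) compute $\partial_1 g_{ij}$ from \eqref{qe}, (3) deduce $\partial_\alpha g_{ab}=0$ from Codazzi.
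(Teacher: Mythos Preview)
Your proposal is correct and matches what the paper does: the paper gives no proof of its own, simply citing \cite[Lemma~7]{BHS2025} and remarking that the argument there carries over with the Ricci-soliton Hessian identity replaced by the quasi-Einstein equation~\eqref{qe}---exactly the substitution you identify. Two minor points worth cleaning up: (i) the mixed-block derivative $\partial_1 g_{a\beta}$ you worry about is not part of this lemma's statement (only same-block components $g_{ab}$ with $a,b\in[a]$ appear), so that digression can be dropped here; (ii) your third paragraph's plan to compute $\partial_1 g_{ij}$ in a normal-parallel \emph{orthonormal} eigenframe $\{E_i\}$ would make $g_{ij}\equiv\delta_{ij}$ and hence yield nothing---the lemma concerns the coordinate frame $\partial_a$ on the leaves, and your first-paragraph computation via the second fundamental form of $\Sigma_c$ and the tangential restriction of~\eqref{qe} already handles $\partial_1 g_{ab}=2\xi_a g_{ab}$ correctly, while the Codazzi/Derdzi\'nski argument you sketch in the second paragraph is the right route to $\partial_\alpha g_{ab}=0$.
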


Now, for each $p \in M_A \cap \{ \nabla f \neq 0  \}$, we assume that, without loss of generality, the multiplicities of the distinct Ricci eigenvalues are given by \[ r_{1}=r_{2}=\cdots=r_{l}=1 \quad \& \quad r_{l+1},~ r_{l+2},~\cdots,~ r_{m'} \geq2\]
for some $0\leq l\leq m'$. 

By using essentially the same method as in dealing with the Ricci soliton case (see  \cite[Theorem 3]{BHS2025} and \cite[Theorem 3.6]{Li}),  we can derive the following local multiply warped product structure, as well as the associated integrability conditions.  

\begin{theorem} \label{mulwar}
	Let $(M^n, g, f)$, $n\geq 4$, be an $n$-dimensional quasi-Einstein manifold satisfying \eqref{qe} with harmonic Weyl curvature and $m\notin{\{\pm1,\,(2-n),\, \infty\}}$.
	Then, for each point $p \in M_A \cap \{ \nabla f \neq 0  \}$, there exists a neighborhood 
$U\subset M_A \cap \{ \nabla f \neq 0  \}$ of $p$ 
	such that, over $U$, $g$ can be expressed as the following multi-warped product metric:
	\begin{equation} \label{3.10}
	g_{|U}= ds^2 + h^2_1(s)  dt_1^2 + \cdots 
	+ h^2_l(s) dt_l^2+ h^2_{l+1}(s) \tilde{g}_{l+1}+ \cdots 
	+h^2_{m'}(s) \tilde{g}_{m'}.
	\end{equation}
Here, each $h_j(s)$, $1\leq j \leq m$, is a smooth positive function in $s$;
each $\tilde{g}_{\mu}$,  $l+1\leq \mu \leq m'$, 
is an  Einstein metric, with Einstein constant $(r_\mu-1)k_\mu$, over an $r_\mu$-dimensional Einstein manifold. 
	
	Moreover, let $\lambda_1, \cdots, \lambda_n$ be the Ricci-eigenvalues, with $\lambda_1$ being the Ricci-eigenvalue with respect to the gradient vector $\nabla f$, 
	and $\xi_a:= \frac{1}{ |\nabla f|} (\lambda-\lambda_{a})$, $2\leq a\leq n$.  
	Then, the following integrability conditions hold
	\begin{equation}\label{3.11}
	\xi'_a+\xi^2_a+\frac{1}{m}f'\xi_a=-\frac{R'}{2(n-1)f'}, 
	\end{equation}
	\begin{equation}\label{3.12}
	\lambda'_a-\left(\lambda_1-\lambda_a \right)\xi_a=\frac{R'}{2(n-1)},
	\end{equation}
	\begin{equation}\label{3.13}
	\lambda_{1}=-f''+\frac{1}{m}f'^2+\lambda =-\sum^n_{i=2}\left( \xi'_i+\xi^2_i\right),
	\end{equation}
	and
	\begin{equation}\label{3.14}
	\lambda_a=-f'\xi_a+\lambda=- \xi'_a-\xi_a \sum^{n}_{i=2} \xi_i +( r-1)\frac{k}{h^2}.
	\end{equation}
	Here, $h$ is determined by $\xi_a=h'/h$; in addition, in \eqref{3.14}, we have $r=1$ and  $h=h_{a-1}$ for $2\leq a \leq l+1$, and $r=r_\mu$, $h=h_{\mu} $ and $k=k_{\mu} $ for $a\in \left[ l+r_{l+1}+\cdots+r_{\mu-1} +2\right]$, where we have used the convention 
	$[a]=\{b| \ \lambda_{b}=\lambda_{a}~ {\rm and } ~ b \neq1\}$, $2\leq a\leq n$, as before. 
\end{theorem}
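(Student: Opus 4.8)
\textbf{Proof proposal for Theorem \ref{mulwar}.}

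The plan is to follow the blueprint established for gradient Ricci solitons in \cite{BHS2025, Li}, adapting each step to the quasi-Einstein setting by replacing the soliton potential identities with the quasi-Einstein identities from Lemma \ref{lemma2.3} and Lemma \ref{bhs}. First I would fix a point $p \in M_A \cap \{\nabla f \neq 0\}$ and a neighborhood $U$ on which the number of distinct Ricci eigenvalues is constant, with the frame $\{E_1 = \nabla f/|\nabla f|, E_2, \dots, E_n\}$ of Ricci eigenvector fields as set up above. By (iii) and (v) of the earlier lemma, $s = \int df/|\nabla f|$ is a well-defined coordinate with $E_1 = \nabla s = \partial_1$ and $\nabla_{E_1} E_1 = 0$, so the $E_1$-direction is geodesic and $g$ splits off a $ds^2$ factor. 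The core metric computation is Lemma \ref{bhs}: since $\partial_1 g_{ab} = 2\xi_a g_{ab}$ and $\partial_\alpha g_{ab} = 0$ across distinct eigenspaces, the blocks of $g$ transverse to $E_1$ do not mix, and within the block $[a]$ the $s$-dependence is the overall conformal factor $h_a(s)^2$ with $h_a'/h_a = \xi_a$ (this defines $h$ via $\xi_a = h'/h$). One must still check that each transverse block is $s$-independent after rescaling, i.e. that $\tilde g_\mu$ depends only on the fiber coordinates; this follows from integrating $\partial_1 g_{ab} = 2\xi_a g_{ab}$ together with the fact, from the Derdzi\'nski–Codazzi machinery, that $\xi_a$ is the same function for all $b \in [a]$ and depends only on $s$ (by the Lemma following Lemma \ref{bhs}, the $\lambda_i$, hence the $\xi_i$, are functions of $s$ alone). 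This yields the warped product form \eqref{3.10}; the Einstein property of $\tilde g_\mu$ with constant $(r_\mu - 1)k_\mu$ will come out of the curvature identities in the next step, since a warped product over $\mathbb{R}$ whose total Ricci tensor is block-diagonal with fiberwise-constant eigenvalues forces each fiber to be Einstein (an $r_\mu = 1$ block is automatically "Einstein" vacuously, which is why those are listed separately).

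Next I would derive the integrability conditions \eqref{3.11}–\eqref{3.14} by computing the curvature of the warped product \eqref{3.10} explicitly and matching against the quasi-Einstein equation \eqref{qe} and the Codazzi identity for the Schouten tensor. Concretely: for a metric $ds^2 + \sum_j h_j^2 g_j$, the standard O'Neill-type formulas give $R_{1a1a} = -h_a''/h_a$ (so $\lambda_1 = -\sum_{a\geq 2}(\xi_a' + \xi_a^2)$, using $h_a'/h_a = \xi_a$ and $h_a''/h_a = \xi_a' + \xi_a^2$, which is \eqref{3.13}), and $R_{abab}$ for $a,b$ in the same block $[\mu]$ involves $-(h_\mu'/h_\mu)^2 + k_\mu/h_\mu^2$ while for $a,b$ in different blocks it is just the product of the log-derivatives; summing gives $\lambda_a = -\xi_a' - \xi_a \sum_{i\geq 2}\xi_i + (r-1)k/h^2$, which is \eqref{3.14}. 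The relation $\lambda_a = -f'\xi_a + \lambda$ and $\lambda_1 = -f'' + \frac{1}{m}(f')^2 + \lambda$ are just the quasi-Einstein equation \eqref{qe} read in the frame: the Hessian of $f = f(s)$ has $\nabla^2 f(E_1,E_1) = f''$ and $\nabla^2 f(E_a, E_a) = f' \Gamma^1_{aa} = -f' \xi_a$ (here I would use $\xi_a = \frac{\lambda - \lambda_a}{|\nabla f|}$ from Lemma \ref{bhs} and $f' = |\nabla f|$, which is consistent — these two forms of each equation are the quasi-Einstein equation and the curvature formula agreeing). Then \eqref{3.12} is obtained by differentiating $\lambda_a = -f'\xi_a + \lambda$ in $s$ and substituting, or equivalently from the Codazzi equation $C_{1aa} = 0$ combined with Lemma \ref{lemma2.3}: $C_{ijk} = 0$ gives $\nabla_i R_{jk} - \nabla_j R_{ik} = \frac{1}{2(n-1)}(g_{ik}\nabla_j R - g_{jk}\nabla_i R)$, and evaluating at $(i,j,k) = (1,a,a)$ extracts $\lambda_a' - (\lambda_1 - \lambda_a)\xi_a = \frac{R'}{2(n-1)}$. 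Finally \eqref{3.11} comes from the $(1,a,1,a)$-component of the second Bianchi identity / the Codazzi relation differentiated once more, i.e. from $\nabla_1 C_{1aa} = 0$ together with \eqref{3.13} and \eqref{3.14}; alternatively, differentiate \eqref{3.14} in $s$ and eliminate using \eqref{3.12} and \eqref{3.13}, which after the quasi-Einstein substitution $f'' = \frac{1}{m}(f')^2 + \lambda - \lambda_1$ produces the $\frac{1}{m} f' \xi_a$ term that distinguishes \eqref{3.11} from its soliton analogue.

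I expect the main obstacle to be the bookkeeping in the second step — specifically, verifying that the two expressions given for each of $\lambda_1$ and $\lambda_a$ (the one from the quasi-Einstein equation and the one from the warped-product curvature formula) are genuinely consistent, which is equivalent to the integrability conditions themselves, and then disentangling which identity \eqref{3.11} through \eqref{3.14} is independent versus derived. The quasi-Einstein correction terms $-\frac{1}{m}df\otimes df$ in \eqref{qe} and $\frac{1}{m}(R_{jk}\nabla_i f - R_{ik}\nabla_j f) - \frac{\lambda}{m}(g_{jk}\nabla_i f - g_{ik}\nabla_j f)$ in \eqref{2.27} mean that the soliton computations cannot be quoted verbatim; every place where the soliton proof uses $\nabla R = 2\mathrm{Ric}(\nabla f)$ or $R_{ijkl}\nabla_l f = -\nabla_i R_{jk} + \nabla_j R_{ik}$ must be replaced by the $m$-dependent versions from the second equation of the first Lemma and from Lemma \ref{lemma2.3}. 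Tracking these corrections carefully through the derivation of \eqref{3.11} is where errors are most likely; the remaining steps are essentially the same as in \cite{Li} and \cite{BHS2025}, so for the write-up I would present the metric decomposition and \eqref{3.13}–\eqref{3.14} in detail and then indicate that \eqref{3.11}–\eqref{3.12} follow by differentiation and substitution, referring to the soliton case for the parallel structure while highlighting the $m$-dependent terms.
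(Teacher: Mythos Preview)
Your proposal is correct and matches the paper's approach exactly: the paper itself does not give a detailed proof but simply states that the result follows ``by using essentially the same method as in dealing with the Ricci soliton case (see \cite[Theorem 3]{BHS2025} and \cite[Theorem 3.6]{Li}),'' which is precisely the adaptation you outline. One minor sign slip: in the orthonormal frame $\nabla^2 f(E_a,E_a) = +f'\xi_a$ (not $-f'\xi_a$), consistent with $\lambda_a = \lambda - f'\xi_a$; with that correction, your derivation of \eqref{3.11} by differentiating $\lambda_a = -f'\xi_a + \lambda$ and combining with \eqref{3.12}--\eqref{3.13} goes through exactly as you describe.
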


\begin{remark} Note that $\xi_a=h'/h$ follows from \eqref{3.10}. Also, formula \eqref{3.11} may look different from the corresponding one, $ \xi_a^{'}   -  \xi_\alpha^{'} =  -  ( \xi_a^2 -  \xi_\alpha^2)$, in  the Ricci soliton case \cite{Li}. But the similar crucial formulas \eqref{4.11} and \eqref{5.7} in later sections are still valid.
\end{remark}

\begin{remark}
	For \eqref{3.14}, note that the term $( r-1)\frac{k}{h^2}$ drops when $r=1$. However, since it vanishes for any constant $k$, sometimes we still keep it just for convenience. 
\end{remark}

\section{The local structure of the case with more than two distinct Ricci-eigenfunctions}

The classifications of an $n$-dimensional quasi-Einstein manifold $(M^n, g, f)$ with harmonic Weyl curvature, as stated in Theorem \ref{complete} (and Theorem \ref{local}) will be proved according to how many mutually distinct Ricci-eigenvalues, as well as their multiplicities, it has. 
In this section, we shall investigate whether the Ric eigenvalues\footnote{To avoid repetition, unless stated otherwise, the Ricci-eigenvalues mentioned in the following discussions do not include $\lambda_1$, the eigenvalue of Ric with respect to $\nabla f$. Also, we denote the other Ricci-eigenvalues by $\lambda_a$, $2\leq a \leq n$.} $\lambda_2,~\lambda_3,~\cdots,~\lambda_n$ can have three or more mutually different values. As we shall see, this case cannot occur. Moreover, at the end of the section, we shall also treat the case that all Ricci-eigenfunctions, other than $\lambda_1$, are equal (i.e., $\lambda_2=\cdots =\lambda_ n$).

\medskip	
First of all, we analyze the integrability conditions in Theorem \ref{mulwar}.
Assume that $\lambda_a \neq \lambda_\alpha$ are two distinct Ricci eigenvalues, 
of multiplicities $r_1$ and $r_2$, respectively.
Set 
\begin{equation} \label{X&Y}
X:=\xi_a = \frac{1}{ |\nabla f|} (\lambda-\lambda_{a}) \qquad {\rm and}  \qquad Y:=\xi_\alpha= \frac{1}{ |\nabla f|} (\lambda-\lambda_{a}).
\end{equation}
Then, from \eqref{3.11}-\eqref{3.14}, we have 
\begin{equation}\label{4.1}
\begin{aligned}
X'+X^2+\frac{1}{m}f'X & =Y'+Y^2+\frac{1}{m}f'Y\\
  & =\xi'_i+\xi^2_i+\frac{1}{m}f'\xi_i=- \frac{R'}{2(n-1)f'},
\end{aligned}
\end{equation}
\begin{equation}\label{4.2}
	\begin{aligned}
\lambda_{1}&=-f''+\frac{1}{m}f'^2+\lambda
=-\sum^n_{i=2}\left( \xi'_i+\xi^2_i\right)\\
&=(n-1)\frac{R'}{2(n-1)f'}+\frac{1}{m}f'\sum^n_{i=2}\xi_i\\
&=-(n-1)\left( X'+X^2 +\frac{1}{m}f'X\right)
+\frac{1}{m}f'\sum^n_{i=2}\xi_i,
	\end{aligned}
\end{equation}
\begin{equation}\label{4.3}
\begin{aligned}
\lambda_{a} & =-f'X+\lambda\\
 & =-\left( X'+X^2+\frac{1}{m}f'X\right)+\frac{1}{m}f'X+X^2+(r_1-1)\frac{k_1}{h^2_1} 
-X\sum^n_{i=2}\xi_i,
\end{aligned}
\end{equation}
\begin{equation}\label{4.4}
\begin{aligned}
\lambda_{\alpha} & =-f'Y+\lambda\\
& =-\left(Y'+Y^2+\frac{1}{m}f'Y\right)+\frac{1}{m}f'Y+Y^2+(r_2-1)\frac{k_2}{h^2_2}
-Y\sum^n_{i=2}\xi_i
\end{aligned}
\end{equation}
and 
\begin{equation}\label{4.5}
\lambda'_a-\left(\lambda_1-\lambda_a \right)X=\lambda'_\alpha-\left(\lambda_1-\lambda_\alpha \right)Y.
\end{equation}

By using the above basic facts,  we have the following 
\begin{lemma}\label{lemma4.1}
Let $(M^n, g, f)$, $n\geq 4$, be an $n$-dimensional quasi-Einstein manifold satisfying \eqref{qe} with harmonic Weyl curvature and  $m\notin{\{\pm1,\,(2-n),\, \infty\}}$. Suppose,  
in some neighborhood $U$ of $p\in M_{\rm Ric} \cap \{ \nabla f \neq 0  \}$, $\lambda_a$ and $\lambda_\alpha$ are distinct Ricci-eigenvalues with multiplicities $r_1$ and $r_2$, respectively. 
Then, the following identities hold:	
	
	\begin{equation}\label{4.6}
	(r_1-1)\frac{k_1}{h^2_1}-(r_2-1)\frac{k_2}{h^2_2}=(X-Y)\left[ \sum^n_{i=2}\xi_i-(X+Y)-\frac{m+1}{m}f' \right],
	\end{equation}

       \begin{equation}\label{4.7}
	\begin{aligned}
	(r_1-1)\frac{k_1}{h^2_1}+ (r_2-1)\frac{k_2}{h^2_2} =&\frac{2m+n-1}{m}(X'+X^2+\frac{1}{m}f'X )+\frac{m+1}{m}\lambda \\
& +\left[ \sum^n_{i=2}\xi^2_i-(X^2+Y^2) \right] -\frac{m+1}{m^2}f'\sum^n_{i=2}\xi_i\\
	&+\frac{2}{m}f'\left[\sum^n_{i=2}\xi_i-(X+Y) \right],
	\end{aligned}
	\end{equation}

      \begin{equation}\label{4.8}
	\begin{aligned}
	(r_1-1)\frac{k_1}{h^2_1}X- (r_2-1)\frac{k_2}{h^2_2}Y =& (X-Y)\left[ (X'+X^2+\frac{1}{m}f'X) +\lambda\right] \\
&+(X-Y)(X+Y)\left[\sum^n_{i=2}\xi_i-(X+Y) \right]\\
	&+(X-Y)\left[ XY-\frac{m+1}{m} f'(X+Y)\right],
	\end{aligned}
	\end{equation}

	\begin{equation}\label{4.9}
	-(r_1-1)\frac{k_1}{h^2_1}Y+(r_2-1)\frac{k_2}{h^2_2}X
	=(X-Y)\left[(X'+X^2 +\frac{1}{m}f'X ) +\lambda+XY\right],
	\end{equation}
	
\begin{equation}\label{4.10}
	\begin{aligned}
	&\frac{m-1}{m}\left[ (r_1-1)\frac{k_1}{h^2_1}X- (r_2-1)\frac{k_2}{h^2_2}Y  \right]\\
	&=(X-Y)\left\{ \frac{m+n-2}{m}(X'+X^2+\frac{1}{m}f'X  )+\left[ \sum^n_{i=2}\xi^2_i-(X^2+Y^2)  \right]-\frac{m+1}{m}XY\right\}\\
	&\ \ +(X-Y)\left\{\frac{1}{m}f'\left[\sum^n_{i=2}\xi_i-(X+Y)\right ]-\frac{1}{m}(X+Y+\frac{1}{m}f')[\sum^n_{i=2}\xi_i-(X+Y) ]\right\}
	\end{aligned}
	\end{equation}
	and 
	\begin{equation}\label{4.11}
	\begin{aligned}
	\frac{n-1}{m}(X'+X^2+\frac{1}{m}f'X )  - &\frac{m-1}{m}\lambda+\sum^n_{i=2}\xi^2_i+\frac{m-1}{m^2}f'\sum^n_{i=2}\xi_i\\
	=&(X+Y)\left( \sum^n_{i=2}\xi_i -\frac{m-1}{m}f' \right).
	\end{aligned}
	\end{equation}
\end{lemma}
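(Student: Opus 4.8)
The plan is to read \eqref{4.6}--\eqref{4.11} off the system \eqref{4.1}--\eqref{4.5}, organizing everything around the two fiber-curvature quantities $A_1:=(r_1-1)k_1/h_1^2$ and $A_2:=(r_2-1)k_2/h_2^2$. Throughout I abbreviate $P:=X'+X^2+\tfrac1m f'X$ (common to all the $\xi_i$ by \eqref{4.1}) and $S:=\sum_{i=2}^n\xi_i$. On $M_{\rm Ric}\cap\{\nabla f\neq0\}$ we have $f'\neq0$ and $\lambda_a\neq\lambda_\alpha$, so $X-Y=(\lambda_\alpha-\lambda_a)/f'\neq0$; this legitimizes dividing by $X-Y$ wherever it appears below.

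First I would solve \eqref{4.3} and \eqref{4.4} for $A_1$ and $A_2$, using $\lambda_a=\lambda-f'X$ and $\lambda_\alpha=\lambda-f'Y$ from \eqref{3.14}; this gives $A_1=P+\lambda-X^2+XS-\tfrac{m+1}{m}f'X$ and the same formula for $A_2$ with $X$ replaced by $Y$. Subtracting the two yields \eqref{4.6} at once. Forming $XA_1-YA_2$ produces a cubic polynomial in $X,Y$ from which $(X-Y)$ factors cleanly (using $X^2-Y^2=(X-Y)(X+Y)$, $X^3-Y^3=(X-Y)(X^2+XY+Y^2)$, and $-(X^2+XY+Y^2)=-(X+Y)^2+XY$), and after grouping the $S$- and $f'$-terms this is \eqref{4.8}. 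Then \eqref{4.9} is \eqref{4.8} minus $(X+Y)$ times \eqref{4.6}: the $(X+Y)(S-(X+Y))$ and $\tfrac{m+1}{m}f'$ contributions cancel and one is left with $(X-Y)(P+\lambda+XY)$.

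The remaining three identities need one more input beyond \eqref{3.11}--\eqref{3.14}: since each $A_j$ equals $(r_j-1)k_j/h_j^2$ with $k_j$ a \emph{constant} and $h_j$ determined by $\xi_j=h_j'/h_j$ (both part of Theorem~\ref{mulwar}), one has the evolution relation $A_j'+2\xi_j A_j=0$ along $\partial_1$. I would differentiate the Step-1 formulas for $A_1$ and $A_2$, subtract the two equations $A_1'+2XA_1=0$ and $A_2'+2YA_2=0$, and eliminate the second-order terms $X'',Y''$ using the $s$-derivative of \eqref{3.11} and the term $f''$ using \eqref{3.13}; after dividing by $X-Y$, what survives is precisely \eqref{4.11}. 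Then \eqref{4.10} is obtained as $\tfrac{m-1}{m}$ times \eqref{4.8} plus $(X-Y)$ times a rearrangement of \eqref{4.11} --- the quadratic-in-$X,Y$ part collapsing because $(X+Y)^2-2XY-(X^2+Y^2)=0$ and the $f'$-coefficients combining via $\tfrac1m-\tfrac1{m^2}$ and $\tfrac{m-1}{m^2}\cdot m=\tfrac{m-1}{m}$ --- and finally substituting \eqref{4.11} into the sum of \eqref{4.3} and \eqref{4.4} puts $A_1+A_2$ in the stated form \eqref{4.7}.

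I expect the algebra in the last step to be the main obstacle. Deriving \eqref{4.6}, \eqref{4.8}, \eqref{4.9} is essentially bookkeeping --- a $2\times2$ linear solve followed by factoring $(X-Y)$ out of cubics, with care tracking the $\sum\xi_i$, $\sum\xi_i^2$ and $f'$ terms --- whereas in \eqref{4.11} one has to introduce the second derivatives $X'',Y''$ and $f''$, verify that they all cancel after the substitutions, and land exactly on the right-hand side of \eqref{4.11} with its $m$-dependent coefficients $\tfrac{m-1}{m}$ and $\tfrac{m-1}{m^2}$; once \eqref{4.11} is in place, \eqref{4.10} and \eqref{4.7} are routine.
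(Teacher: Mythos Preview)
Your plan is correct and will produce all six identities; it is, however, organized differently from the paper's proof. The paper subtracts \eqref{4.3} from \eqref{4.4} to get \eqref{4.6}; then differentiates \eqref{4.6} (this is \eqref{4.13}) and separately computes $[f'(X-Y)]'$ via \eqref{4.1}--\eqref{4.2} (this is \eqref{4.14}), obtaining \eqref{4.7} by comparison; next it invokes the Codazzi relation \eqref{4.5} to write $\lambda_\alpha'-\lambda_a'=(\lambda_1-\lambda_\alpha)Y-(\lambda_1-\lambda_a)X$, computes the right-hand side via \eqref{4.2}--\eqref{4.4} (this is \eqref{4.15}), and compares with \eqref{4.14} to get \eqref{4.8}; \eqref{4.9} follows from \eqref{4.8} and \eqref{4.6}; \eqref{4.10} from combining \eqref{4.13} and \eqref{4.15}; and \eqref{4.11} as \eqref{4.10} minus $\tfrac{m-1}{m}$ times \eqref{4.8}.

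Your route is more direct for \eqref{4.8} and \eqref{4.9}: having solved $A_1,A_2$ from \eqref{4.3}--\eqref{4.4}, forming $XA_1-YA_2$ immediately yields \eqref{4.8} as a purely algebraic identity, with no appeal to \eqref{4.5}. (This is legitimate: \eqref{3.12}, hence \eqref{4.5}, is in fact a consequence of \eqref{3.11}, \eqref{3.13}, \eqref{3.14}, so the paper's use of it is redundant here.) For \eqref{4.11} your idea to subtract $A_1'+2XA_1=0$ from $A_2'+2YA_2=0$ works cleanly; note, though, that no second derivatives $X'',Y''$ ever appear, because $P$ is common to both $A_1$ and $A_2$ so the $P'$ terms cancel in the difference---you only need $X',Y',S',f''$, supplied by \eqref{4.1} and \eqref{4.2}/\eqref{3.13}. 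Your recovery of \eqref{4.10} and \eqref{4.7} from \eqref{4.8} and \eqref{4.11} checks out. What your organization buys is a cleaner separation between the purely algebraic identities (\eqref{4.6}, \eqref{4.8}, \eqref{4.9}) and the one genuinely differential relation (\eqref{4.11}); the paper's route has the virtue of making the role of the harmonic-Weyl/Codazzi condition explicit, even if that role turns out to be inessential at this step.
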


\begin{proof}
	First, subtracting \eqref{4.3} from \eqref{4.4} gives us 
	\begin{equation}
	\begin{aligned}\label{4.12}
	\lambda_{\alpha}-\lambda_a=f'(X-Y)
	=&-\frac{1}{m}f'(X-Y)+(X-Y)\left[ \sum^n_{i=2}\xi_i-(X+Y) \right]\\
	& -\left[(r_1-1)\frac{k_1}{h^2_1}-(r_2-1)\frac{k_2}{h^2_2}\right],
	\end{aligned}
	\end{equation}
	which implies that 
	\[\frac{m+1}{m}f'(X-Y)=(X-Y)\left[ \sum^n_{i=2}\xi_i-(X+Y)\right]
	-\left[(r_1-1)\frac{k_1}{h^2_1}-(r_2-1)\frac{k_2}{h^2_2}\right],
	\]
	yielding \eqref{4.6}.
	Moreover, differentiating the above equation  leads to 
	\begin{equation}\label{4.13}
	\begin{aligned}
	\frac{m+1}{m}\left[f'(X-Y)\right]' =& (n-3) (X-Y)(X'+X^2+\frac{1}{m}f'X ) \\
& - (X-Y)(X+Y+\frac{1}{m}f')\left[ \sum^n_{i=2}\xi_i-(X+Y) \right]\\
	&-(X-Y)\left[ \sum^n_{i=2}\xi^2_i-(X^2+Y^2)\right] \\
& -(X-Y)\frac{1}{m}f'\left[ \sum^n_{i=2}\xi_i-(X+Y) \right]\\
	&+2\left[ (r_1-1)\frac{k_1}{h^2_1}X-(r_2-1)\frac{k_2}{h^2_2}Y\right],
	\end{aligned}
	\end{equation}
  where we have used $h'_1/{h_1}=X$ and $h'_2/{h_2}=Y$.
	On the other hand, by applying identities \eqref{4.1}, \eqref{4.2} and \eqref{4.12}, we see that
	\begin{equation}\label{4.14}
	\begin{aligned}
	\left[f'(X-Y)\right]'=& f''(X-Y)-f'(X-Y)(X+Y+\frac{1}{m}f')\\
	=&(X-Y)\left\{(n-1)(X'+X^2+\frac{1}{m}f'X ) +\lambda-\frac{1}{m}f'\sum^n_{i=2}\xi_i +\frac{1}{m}f'^2\right\}\\
	&-\frac{m}{m+1}(X-Y)\left[ \sum^n_{i=2}\xi_i-(X+Y) \right] (X+Y+\frac{1}{m}f')\\
	&+\frac{m}{m+1}(X+Y+\frac{1}{m}f')\left[ (r_1-1)\frac{k_1}{h^2_1}-(r_2-1)\frac{k_2}{h^2_2}\right].
	\end{aligned}
	\end{equation}
	Comparing \eqref{4.13} with \eqref{4.14}, and using the fact $X\neq Y$, gives us \eqref{4.7}. 
	
	\smallskip
	Next, we take the harmonic Weyl condition into the consideration. It follows from identities \eqref{4.2}, \eqref{4.3} and \eqref{4.4} that 
	\begin{equation}
	\begin{aligned}\label{4.15}
	(\lambda_1-\lambda_\alpha)Y -(\lambda_1-\lambda_a)X & =(n-2) (X-Y)(X'+X^2 +\frac{1}{m}f'X)\\
 & - (X-Y) (X+Y+\frac{1}{m}f')\sum^n_{i=2}\xi_i \\
	&+(X-Y)\left[X^2+XY+Y^2+\frac{1}{m}f'(X+Y)\right]\\
	&+\left[ (r_1-1)\frac{k_1}{h^2_1}X-(r_2-1)\frac{k_2}{h^2_2}Y\right].
	\end{aligned}
	\end{equation}
	Since $\lambda'_\alpha-\lambda'_a=\left(\lambda_1-\lambda_\alpha \right)Y-\left(\lambda_1-\lambda_a \right)X$ by \eqref{4.5},
	comparing \eqref{4.14} and \eqref{4.15} shows \eqref{4.8}. In addition, 
	substituting
		\begin{equation*}
	\begin{aligned}	
	\frac{m+1}{m} & f'(X-Y)(X+Y)\\
	=&\left\{(X-Y)\left[ \sum^n_{i=2}\xi_i-(X+Y) \right]
	-\left[(r_1-1)\frac{k_1}{h^2_1}-(r_2-1)\frac{k_2}{h^2_2}\right]\right\}(X+Y)
	\end{aligned}
	\end{equation*}
	into \eqref{4.8} implies \eqref{4.9}.
	
Meanwhile, we obtain \eqref{4.10} by combining \eqref{4.13} and \eqref{4.15}.
Finally,	subtracting \eqref{4.8} from \eqref{4.10}  leads to \eqref{4.11}.  This completes the proof of the lemma.
\end{proof}	

Now, we are ready to apply equations \eqref{4.1}, \eqref{4.2} and \eqref{4.11} to obtain the following desired result.		
\begin{theorem}\label{thm4.1}
Let $(M^n, g, f)$, $n\geq 4$, be an $n$-dimensional quasi-Einstein manifold satisfying \eqref{qe} with harmonic Weyl curvature, where $m>1$ and $m\neq\infty$. Then, for each 
$p \in M_A \cap \{ \nabla f \neq 0  \}$,  there exists a neighborhood $U$ of $p$ on which 
the Ricci eigenvalues $\lambda_2,~\lambda_3,~\cdots,~\lambda_n$ can have at most two distinct values. 
\end{theorem}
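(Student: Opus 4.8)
The plan is to argue by contradiction: assume there exist three mutually distinct Ricci eigenvalues $\lambda_a$, $\lambda_\alpha$, $\lambda_\beta$ on $U$, with corresponding quantities $X=\xi_a$, $Y=\xi_\alpha$, $Z=\xi_\beta$. The key observation is that equation \eqref{4.11} from Lemma \ref{lemma4.1} is \emph{symmetric} under the interchange of the two chosen eigenvalues and involves only the common quantity $\sum_{i=2}^n\xi_i$, the scalar-type term $X'+X^2+\frac1m f'X$ (which by \eqref{4.1} is independent of which eigenvalue we picked and equals $-\frac{R'}{2(n-1)f'}$), and the sum $X+Y$. So applying \eqref{4.11} to the pair $(a,\alpha)$ and then to the pair $(a,\beta)$ and subtracting, everything cancels except the $(X+Y)$ versus $(X+Z)$ terms, giving
\begin{equation*}
(Y-Z)\left(\sum_{i=2}^n\xi_i-\frac{m-1}{m}f'\right)=0.
\end{equation*}
Since $Y\neq Z$ (because $\lambda_\alpha\neq\lambda_\beta$ and $\xi=\frac{\lambda-\lambda_\cdot}{|\nabla f|}$), we conclude $\sum_{i=2}^n\xi_i=\frac{m-1}{m}f'$ on $U$.

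Next I would feed this relation $\sum_{i=2}^n\xi_i=\frac{m-1}{m}f'$ back into \eqref{4.11} itself: the right-hand side becomes $(X+Y)\cdot 0=0$, so
\begin{equation*}
\frac{n-1}{m}\left(X'+X^2+\frac1m f'X\right)-\frac{m-1}{m}\lambda+\sum_{i=2}^n\xi_i^2+\frac{m-1}{m^2}f'\sum_{i=2}^n\xi_i=0.
\end{equation*}
Here the terms $-\frac{m-1}{m}\lambda$, $\sum\xi_i^2$, $\frac{m-1}{m^2}f'\sum\xi_i$, and (via \eqref{4.1}) the scalar term $X'+X^2+\frac1m f'X$ are all \emph{independent of the choice of eigenvalue among $a,\alpha,\beta$}. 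Therefore this single equation forces $X'+X^2+\frac1m f'X$ to take the same value regardless — which we already knew — but more importantly, we can now also bring in \eqref{4.2}, which reads $\lambda_1=-(n-1)(X'+X^2+\frac1m f'X)+\frac1m f'\sum_{i=2}^n\xi_i$; substituting $\sum\xi_i=\frac{m-1}{m}f'$ gives $\lambda_1=-(n-1)(X'+X^2+\frac1m f'X)+\frac{m-1}{m^2}f'^2$, again manifestly symmetric. The contradiction must therefore come from an \emph{asymmetric} identity, and the natural candidate is \eqref{4.6}: with $\sum_{i=2}^n\xi_i-(X+Y)-\frac{m+1}{m}f' = \frac{m-1}{m}f'-(X+Y)-\frac{m+1}{m}f' = -(X+Y)-\frac2m f'$, equation \eqref{4.6} becomes
\begin{equation*}
(r_1-1)\frac{k_1}{h_1^2}-(r_2-1)\frac{k_2}{h_2^2}=-(X-Y)\left(X+Y+\frac2m f'\right).
\end{equation*}

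The strategy from here is to extract from \eqref{4.6}, \eqref{4.8}, \eqref{4.9} (applied to each of the three pairs among $a,\alpha,\beta$) enough relations among $X,Y,Z$ and the warping quantities $(r-1)k/h^2$ to derive a contradiction. Concretely, I expect that combining \eqref{4.9} for the pairs $(a,\alpha)$, $(a,\beta)$, $(\alpha,\beta)$ — each of which isolates a difference of warping terms against $(X-Y)[(\text{scalar})+\lambda+XY]$ — and using that the scalar term $S:=X'+X^2+\frac1m f'X$ plus $\lambda$ is common, one gets that the three quantities $(r_\cdot-1)k_\cdot/h_\cdot^2$ are affine expressions in $S+\lambda$ with coefficients $XY$, $XZ$, $YZ$; cross-comparing then pins down $X,Y,Z$ to satisfy an overdetermined system. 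The main obstacle I anticipate is organizing this elimination cleanly — there are several warping-term unknowns $(r_1-1)k_1/h_1^2$, etc., and one must be careful that the cases $r_j=1$ (where the term vanishes identically regardless of $k_j$) do not create spurious degrees of freedom; I would handle this by noting that \eqref{4.6} and \eqref{4.9} together already over-determine each pairwise difference of warping terms, so summing the three cyclic versions of \eqref{4.9} (whose left sides telescope) against the three cyclic versions of \eqref{4.6} should produce a polynomial identity in $X,Y,Z$ with no warping terms at all, and that identity — combined with $X\neq Y\neq Z\neq X$ — is where the contradiction crystallizes. Following the cited blueprint in \cite{Li} and \cite{BHS2025} for the soliton case, the final step should reduce to showing a certain nonzero polynomial in the $\xi_i$'s vanishes, which is impossible.
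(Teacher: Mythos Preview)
The first half of your proposal --- applying \eqref{4.11} to different pairs to deduce $\sum_{i=2}^n\xi_i = \frac{m-1}{m}f'$, and then feeding this back into \eqref{4.11} to obtain the scalar equation
\[
\frac{n-1}{m}\left(X'+X^2+\tfrac1m f'X\right)-\frac{m-1}{m}\lambda+\sum_{i=2}^n\xi_i^2+\frac{m-1}{m^2}f'\sum_{i=2}^n\xi_i=0
\]
--- is exactly what the paper does (these are its equations \eqref{4.16} and \eqref{4.17}). Where you go astray is the sentence ``The contradiction must therefore come from an \emph{asymmetric} identity.'' This diagnosis is wrong, and it sends you into a speculative elimination scheme with \eqref{4.6}, \eqref{4.8}, \eqref{4.9} that you never actually carry out.

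The paper's finishing move is entirely symmetric: \emph{differentiate} the constraint $\sum_{i=2}^n\xi_i = \frac{m-1}{m}f'$ with respect to $s$. Using \eqref{4.1} to rewrite each $\xi_i'$ and \eqref{4.2} to rewrite $f''$, one obtains a second scalar equation,
\[
\frac{n-1}{m}\left(X'+X^2+\tfrac1m f'X\right)-\frac{m-1}{m}\lambda-\sum_{i=2}^n\xi_i^2-\frac{1}{m^2}f'\sum_{i=2}^n\xi_i-\frac{m-1}{m^2}(f')^2=0.
\]
Subtracting this from the equation you already have, and substituting $\sum\xi_i=\frac{m-1}{m}f'$ once more, collapses everything to
\[
\sum_{i=2}^n\xi_i^2+\frac{m-1}{m^2}(f')^2=0.
\]
Since $m>1$, both summands are nonnegative, forcing every $\xi_i=0$ --- impossible if $X,Y,Z$ are distinct. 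No warping terms $(r-1)k/h^2$ ever appear.

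So the gap is concrete: you reached \eqref{4.17} but did not see that differentiating \eqref{4.16} supplies the companion identity needed to close. Your proposed pairwise cycling through \eqref{4.6}/\eqref{4.8}/\eqref{4.9} is unnecessary, and as written it is only a sketch (``I expect\dots'', ``should reduce to\dots'') rather than an argument.
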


\begin{proof} We prove by contradiction. Suppose not, then $\{\lambda_2,~\lambda_3,~\cdots,~\lambda_n\}$ have at least three distinct values, which we denote by $\lambda_a$, $\lambda_\alpha$ and $\lambda_q$,  with multiplicities $r_1$, $r_2$ and $r_3$, respectively.
	For convenience, we also denote $X=\xi_a$, $Y=\xi_\alpha$ as in \eqref{X&Y}, and  $Z=\xi_{q}$.	By \eqref{4.11}, 
	and under the assumption of Lemma \ref{lemma4.1}, we see that
	\begin{equation*}
	\begin{aligned}
	\frac{n-1}{m}(X' &+X^2+\frac{1}{m}f'X ) -\frac{m-1}{m}\lambda+\sum^n_{i=2}\xi^2_i+\frac{m-1}{m^2}f'\sum^n_{i=2}\xi_i\\
	=&(X+Y)\left( \sum^n_{i=2}\xi_i -\frac{m-1}{m}f' \right)\\
	=&(X+Z)\left( \sum^n_{i=2}\xi_i -\frac{m-1}{m}f' \right)\\
	=&(Y+Z)\left( \sum^n_{i=2}\xi_i -\frac{m-1}{m}f' \right).
	\end{aligned}
	\end{equation*}
	Therefore, since $X$, $Y$ and $Z$ are mutually distinct, it follows that
	\begin{equation}\label{4.16}
	\sum^n_{i=2}\xi_i =\frac{m-1}{m}f', 
	\end{equation}
	 and then we have
	\begin{equation}\label{4.17}
	\frac{n-1}{m}(X'+X^2+\frac{1}{m}f'X ) -\frac{m-1}{m}\lambda+\sum^n_{i=2}\xi^2_i+\frac{m-1}{m^2}f'\sum^n_{i=2}\xi_i=0.
	\end{equation}
	Differentiating \eqref{4.16}, with \eqref{4.1} and \eqref{4.2}, we obtain
	\begin{equation}\label{4.18}
	\frac{n-1}{m}(X'+X^2+\frac{1}{m}f'X ) -\frac{m-1}{m}\lambda-\sum^n_{i=2}\xi^2_i-\frac{1}{m^2}f'\sum^n_{i=2}\xi_i-\frac{m-1}{m^2}f'^2=0.
	\end{equation}
	By comparing \eqref{4.18} with \eqref{4.17}, and using \eqref{4.16}, it follows that
	\[
	\sum^n_{i=2}\xi^2_i+\frac{m-1}{m^2}f'^2=0.
	\]
	Hence,  $\sum^n_{i=2}\xi^2_i=\frac{m-1}{m^2}f'^2=0$ when $m>1$. This is a contradiction,  
	since $X,~Y ~{\rm and }~Z$ are mutually distinct. 
\end{proof}

\begin{remark}
   From \eqref{4.9}, it follows easily that there are no more than two (mutually) distinct Ricci eigenvalues, among $\lambda_2,~\lambda_3,~\cdots,~\lambda_n$, of multiplicity one (without requiring $m>1$).   
  Indeed, suppose $r_1= r_2 =r_3=1$. Then,  by \eqref{4.9}, we have 
   \[
    (X'+X^2 +\frac{1}{m}f'X ) +\lambda+XY=0= (X'+X^2 +\frac{1}{m}f'X ) +\lambda+XZ,
   \]
which implies that $X(Y-Z)=0$. By the real analyticity of quasi-Einstein maniolds and $Y\neq Z$, we have $X=0$. Similarly, $Y=Z=0$, but that is impossible.
Therefore, we see that the case of $\lambda_2,~\lambda_3$ and $\lambda_4$ being mutually distinct cannot occur, which was first proved by Shin in \cite{Shin} when $n=4$.
\end{remark}

We conclude this section by treating the case that all Ricci-eigenfunctions, other than $\lambda_1$, are equal (i.e., $\lambda_2=\cdots =\lambda_ n$). This corresponds to quasi-Einstein maniolds of Type {\rm (a)} in Theorem \ref{complete}. 

\begin{theorem} \label{same}
	Let $(M^n, g, f)$, $n\geq 4$, be an $n$-dimensional quasi-Einstein manifold satisfying \eqref{qe} with harmonic Weyl curvature and $m\notin{\{\pm1,\,(2-n),\, \infty\}}$.	
	Suppose that all Ricci-eigenfunctions $\{\lambda_a\}$, $2\le a\le  n$, are equal.	Then, either $(M^n, g)$ is Einstein or the metric $g$ is locally a warped product of the form
	\begin{equation*} \label{metr}
	g= ds^2 +  h(s)^2 \tilde{g},
	\end{equation*}
	for some positive function $h$,
	where the Riemannian metric $\tilde{g}$ is Einstein. 
	Furthermore, the $D$-tensor of  $(M^n, g,f)$ vanishes. 
\end{theorem}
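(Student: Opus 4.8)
The plan is to work in the local multiply warped product picture provided by Theorem~\ref{mulwar}, specialized to the case $m' = 1$ (one single repeated eigenvalue besides $\lambda_1$), i.e. $r_1 = n-1$. First I would note that if $\lambda_2 = \cdots = \lambda_n = \lambda_1$ as well, then $Ric$ is pointwise proportional to $g$; since $R$ is then locally constant (being a function of $s$ only, and the Schleifen-type identities force constancy — this follows from the contracted Bianchi identity, $\nabla R = 2\,\delta Ric = 0$ when $Ric = \tfrac{R}{n}g$), the manifold is Einstein, giving the first alternative. So I may assume $\lambda_2 = \cdots = \lambda_n =: \mu \neq \lambda_1$ on a dense open set, hence all the $\xi_a$ coincide, $\xi_2 = \cdots = \xi_n =: \xi = h'/h$, and the metric \eqref{3.10} collapses to $g = ds^2 + h(s)^2 \tilde g$ with $\tilde g$ Einstein of constant $(n-2)k$. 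This is exactly the claimed warped product form, so the structural part is essentially read off from Theorem~\ref{mulwar}.

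The substantive part is showing that the $D$-tensor vanishes. By Lemma~\ref{lemma2.3} (harmonic Weyl curvature) the Cotton tensor $C_{ijk}$ vanishes, so by \eqref{2.26} it suffices to show $R_{ijkl}\nabla_l f = 0$, i.e. that $\nabla f$ lies in the kernel of the curvature operator in its last slot; equivalently, using the last line of \eqref{2.27}, I need
\[
\frac{m-1}{m(n-1)}\nabla_l f\,(R_{lj}g_{ik} - R_{li}g_{jk}) + \frac{1}{m}(R_{jk}\nabla_i f - R_{ik}\nabla_j f) + \frac{R}{m(n-1)}(g_{ik}\nabla_j f - g_{jk}\nabla_i f) = 0.
\]
In the eigenframe $\{E_1, \ldots, E_n\}$ with $\nabla f = f' E_1$, one checks this component by component: the $(i,j) = (1,b)$ components with $b \geq 2$ reduce, after using $R_{11} = \lambda_1$, $R_{bb} = \mu$, $R_{1b} = 0$, to a single scalar identity
\[
\frac{m-1}{m(n-1)}\,\lambda_1 - \frac{1}{m}\,\mu - \frac{R}{m(n-1)} = \frac{m-1}{m(n-1)}\,\mu - \frac{1}{m}\,\mu - \frac{R}{m(n-1)},
\]
wait — more carefully, the vanishing is equivalent to $\lambda_1 = \mu$ OR the bracket vanishing; since $\lambda_1 \neq \mu$, what is really needed is that the common value of the relevant combination is zero, and this is where the integrability conditions come in. Concretely I would use $R = \lambda_1 + (n-1)\mu$ together with \eqref{3.12}, which with all $\xi_a$ equal reads $\mu' - (\lambda_1 - \mu)\xi = \tfrac{R'}{2(n-1)}$, and \eqref{3.13}–\eqref{3.14}, to derive the single ODE relation that makes $D \equiv 0$; alternatively, and perhaps more cleanly, I would invoke that $D_{ijk}$ in this frame has only components of the form $D_{1b1}$-type and $D_{1bc}$-type, and show each vanishes using $R_{1b1c}\nabla_l f$-symmetry and the fact that the warped product curvature $R_{1b1c} = -\tfrac{h''}{h}\tilde g_{bc}$ is itself proportional to the metric on the fiber.

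The cleanest route, and the one I would actually write, is: since $g = ds^2 + h^2\tilde g$ with $\tilde g$ Einstein, the only a priori nonzero components of $R_{ijkl}\nabla_l f = f' R_{ijk1}$ are $f' R_{1bc1}$ and $f' R_{bcd1}$; but $R_{bcd1} = 0$ for a warped product over a $1$-dimensional base (the mixed components with exactly one index equal to $1$ vanish unless paired as $R_{1b1c}$), and $R_{1bc1} = -R_{1b1c} = \tfrac{h''}{h}h^2\tilde g_{bc}$ is proportional to $g_{bc}$. Plugging $R_{ijkl}\nabla_l f = \tfrac{h''}{h}f'(g_{ik}g_{j1} - g_{jk}g_{i1})\cdot(\text{sign})$ — i.e. a multiple of $g \owedge df$ — into the Cotton-vanishing identity \eqref{2.26}, and recalling $D_{ijk}$ is built from $A \owedge df$ with $A$ a multiple of $g$ in directions transverse to $E_1$, one sees both sides of \eqref{2.26} are multiples of the same Kulkarni–Nomizu-type expression $g_{ij}\nabla_k f - g_{ik}\nabla_j f$; matching coefficients and using $C_{ijk} = 0$ forces that coefficient to be zero, hence $D_{ijk} = 0$. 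The main obstacle I anticipate is bookkeeping: verifying that in the warped-product eigenframe there are genuinely no "rogue" curvature components $R_{bcd1}$ and that the Schouten tensor really is a scalar multiple of $g$ on the fiber distribution (this uses $\tilde g$ Einstein plus $\lambda_2 = \cdots = \lambda_n$), after which the algebraic identity \eqref{2.26} collapses essentially for free.
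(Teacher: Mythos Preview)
Your treatment of the warped-product structure is fine and matches the paper: once $\lambda_2=\cdots=\lambda_n$, Theorem~\ref{mulwar} with a single fiber factor gives $g=ds^2+h^2\tilde g$ with $\tilde g$ Einstein, and if $\lambda_1$ also coincides with this value the metric is Einstein.

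The gap is in the $D$-vanishing step. Having set $C_{ijk}=0$, you reduce via \eqref{2.26} to showing $R_{ijkl}\nabla_l f=0$, then correctly observe that in the warped product $R_{1b1c}=-\tfrac{h''}{h}\delta_{bc}\neq 0$, so this fails. Your fallback---that both $D$ and $R_{ijkl}\nabla_l f$ are scalar multiples of the same tensor $g_{ij}\nabla_k f - g_{ik}\nabla_j f$, and that ``matching coefficients'' via \eqref{2.26} then forces the coefficient to vanish---is not a valid inference: \eqref{2.26} only says the two coefficients are \emph{proportional}, which is vacuous and does not make either zero. (Part of the confusion is that \eqref{2.26} as printed carries $R_{ijkl}$, whereas the identity quoted in the introduction has $W_{ijkl}$; with the Weyl version your warped-product computation \emph{would} work, since the same calculation that gave $R_{1b1c}=-\tfrac{h''}{h}\delta_{bc}$ gives $W_{1b1c}=\tfrac{1}{(n-1)(n-2)}\big[R-\lambda_1-(n-1)\mu\big]\delta_{bc}=0$.)

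The simplest fix, and what the paper has in mind by ``easy to check,'' is to bypass \eqref{2.26} and compute $D$ straight from its definition in the orthonormal eigenframe. With $\nabla f=f'E_1$, $R_{11}=\lambda_1$, $R_{aa}=\mu$ for $a\ge 2$, and all off-diagonal Ricci components zero, the only components of $D$ not killed outright by the diagonal structure are (up to skew-symmetry) $D_{b1c}$ with $b,c\ge 2$, and one line gives
\[
D_{b1c}\;=\;-\,\frac{f'}{(n-1)(n-2)}\Big[(n-1)\mu+\lambda_1-R\Big]\delta_{bc}\;=\;0,
\]
since $R=\lambda_1+(n-1)\mu$. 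No ODEs from \eqref{3.12}--\eqref{3.14} are needed; the vanishing is purely algebraic in the eigenvalue data you already have.
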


\begin{proof}
	Clearly, either we have all Ricci-eigenfunctions are equal, i.e., $\lambda_1=\lambda_2=\cdots=\lambda_n$, hence $(M^n, g)$ is Einstein or, by Theorem \ref{mulwar}, the metric $g$ is locally a warped product of the given form.		
	Furthermore, since the Cotton tensor $C_{ijk}=0$,
	it is easy to check that the vanishing of $D$-tensor 
follows from \eqref{2.26}; see also \cite{Li} for more details.
\end{proof}

\section{The local structure of the case with two distinct Ricci-eigenfunctions}

	In this section, we study the remaining case when there are exactly two distinct Ricci eigenvalues among
$\lambda_2, \cdots, \lambda_n$, yielding quasi-Einstein manifolds of types \!{\rm (\!i)}--{\rm (\!iv)} in Theorem \ref{two1} and of types \!{\rm ( \!i) \& {\rm ( \!ii) in Theorem \ref{two2}, respectively.

 Throughout this section, we denote by $\lambda_a$ and $\lambda_\alpha$ the two distinct values among $\lambda_2, \cdots, \lambda_n$, and assume they have multiplicities $r_1$ and $r_2:=n-r_1-1$, respectively. Again, we denote $X=\xi_a$ and $Y=\xi_\alpha$ as in \eqref{X&Y}. Then,  it is clear that equations \eqref{4.6}-\eqref{4.11} in Lemma \ref{lemma4.1} can be expressed as follows:
\begin{equation}\label{5.1}
(r_1-1)\frac{k_1}{h^2_1}-(r_2-1)\frac{k_2}{h^2_2}=(X-Y)\left[ (r_1-1)X+(r_2-1)Y-\frac{m+1}{m}f' \right],
\end{equation}
\begin{equation}\label{5.2}
\begin{aligned}
(r_1-1)\frac{k_1}{h^2_1}+ (r_2-1) \frac{k_2}{h^2_2} =& \frac{2m+n-1}{m}(X'+X^2+\frac{1}{m}f'X )+\frac{m+1}{m}\lambda\\
& +\left[ (r_1-1)X^2+(r_2-1)Y^2 \right]   -\frac{m+1}{m^2}f'(r_1X+r_2Y)\\ 
&+\frac{2}{m}f'[(r_1-1)X+(r_2-1)Y],
\end{aligned}
\end{equation}
\begin{equation}\label{5.3}
\begin{aligned}
(r_1-1)\frac{k_1}{h^2_1}X- (r_2-1)\frac{k_2}{h^2_2}Y 
= & (X-Y)\left[ (X'+X^2+\frac{1}{m}f'X ) +\lambda \right]
\\ 
&+(X-Y)(X+Y) \left[ (r_1-1)X+(r_2-1)Y \right]\\
&+(X-Y)\left[ XY-\frac{m+1}{m} f'(X+Y)\right],
\end{aligned}
\end{equation}
\begin{equation}\label{5.4}
-(r_1-1)\frac{k_1}{h^2_1}Y+(r_2-1)\frac{k_2}{h^2_2}X
=(X-Y)\left[\left((X'+X^2+\frac{1}{m}f'X \right)+\lambda+XY\right],
\end{equation}
\begin{equation}\label{5.5}
\begin{aligned}
\frac{m-1}{m}\left[ (r_1-1)\frac{k_1}{h^2_1}X   - (r_2-1)\frac{k_2}{h^2_2}Y  \right]= & \frac{m+n-2}{m}(X-Y)(X'+X^2+\frac{1}{m}f'X)\\
& +\frac{m-1}{m}(X-Y)\left[ (r_1-1)X^2+(r_2-1)Y^2\right]\\
&+\frac{m-1}{m^2} (X-Y) f'\left[(r_1-1)X+(r_2-1)Y\right ] \\
& -\frac{m+n-2}{m}(X-Y)XY, 
\end{aligned}
\end{equation}
and 
\begin{equation}\label{5.6}
\begin{aligned}
\frac{n-1}{m}(X'+X^2+\frac{1}{m}f'X ) -\frac{m-1}{m}\lambda= & (n-1)XY\\
& -\frac{m-1}{m^2}f' \left[(r_1-1)X+(r_2-1)Y\right ]\\
& -\frac{(m-1)(m+1)}{m^2}f' (X+Y).
\end{aligned}
\end{equation}

First of all, we shall need the following useful lemma. 

\begin{lemma}\label{lemma5.1}
	Let $(M^n, g, f)$, $n\geq 4$, be an $n$-dimensional quasi-Einstein manifold satisfying \eqref{qe} with harmonic Weyl curvature and $m\notin{\{\pm1,\,(2-n),\, \infty\}}$.
	In a neighborhood $U$ of $p$ in $M_A \cap \{ \nabla f \neq 0  \}$, 
	assume that the Ricci eigenvalues 
	$\lambda_2, \cdots, \lambda_n$  have exactly two distinct values, 
	denoted by $\lambda_a$ and $\lambda_\alpha$, of multiplicities $r_1$ and $r_2:=n-r_1-1$.
	Then, we have
	\begin{equation}\label{5.7}
	X'+X^2 +\frac{1}{m}f'X+XY=0
	\end{equation}
	and 
	\begin{equation}\label{5.8}
	\begin{aligned}
	(m+1)XY & -\frac{m-1}{m(n-1)}f'[(r_1-1)X+(r_2-1)Y]\\
	&-\frac{m^2-1}{m(n-1)}f'(X+Y)+\frac{m-1}{n-1}\lambda=0.
	\end{aligned}
	\end{equation}
\end{lemma}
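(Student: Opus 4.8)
The goal is to extract equations \eqref{5.7} and \eqref{5.8} from the system \eqref{5.1}--\eqref{5.6} together with the basic integrability facts \eqref{4.1}--\eqref{4.5} already established. The plan is to eliminate the warped-product curvature terms $(r_1-1)k_1/h_1^2$ and $(r_2-1)k_2/h_2^2$ from these identities, since \eqref{5.7} and \eqref{5.8} contain no such terms. The cleanest route is to play \eqref{5.3}, \eqref{5.4}, and \eqref{5.5} against one another: each of these three has the same left-hand-side combination $(r_1-1)\frac{k_1}{h_1^2}X-(r_2-1)\frac{k_2}{h_2^2}Y$ (up to the factor $\frac{m-1}{m}$ in \eqref{5.5}), while \eqref{5.4} contains the complementary combination $-(r_1-1)\frac{k_1}{h_1^2}Y+(r_2-1)\frac{k_2}{h_2^2}X$.

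First I would compare \eqref{5.3} and \eqref{5.4}. Adding $X$ times \eqref{5.4} to $Y$ times \eqref{5.3}, or more simply forming the combination that isolates $(r_1-1)\frac{k_1}{h_1^2}(X^2 - Y^2)$ or similar, produces after dividing by the nonzero factor $X-Y$ an expression purely in $X,Y,f',\lambda$ and $X'$. Concretely, from \eqref{5.3} and \eqref{5.4} one can solve for the two unknowns $(r_1-1)k_1/h_1^2$ and $(r_2-1)k_2/h_2^2$ individually (the $2\times 2$ system has determinant $-(X^2-Y^2)\neq 0$), substitute the resulting expressions into \eqref{5.5} (or into \eqref{5.2}), and the consistency condition collapses to \eqref{5.7} after cancelling $X-Y$ and simplifying. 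Equation \eqref{5.7} is exactly the statement that the quantity $X'+X^2+\frac{1}{m}f'X+XY$, which by symmetry of the argument equals $Y'+Y^2+\frac{1}{m}f'Y+XY$, vanishes — consistent with \eqref{4.1}, which already forces $X'+X^2+\frac1m f'X = Y'+Y^2+\frac1m f'Y$. Once \eqref{5.7} is in hand, substituting it back to eliminate the $(X'+X^2+\frac1m f'X)$ term from \eqref{5.6} should directly yield \eqref{5.8}; indeed \eqref{5.6} becomes $-(n-1)XY - \frac{m-1}{m}\lambda = (n-1)XY - \frac{m-1}{m^2}f'[(r_1-1)X+(r_2-1)Y] - \frac{(m-1)(m+1)}{m^2}f'(X+Y)$, and rearranging and multiplying by an appropriate constant gives precisely \eqref{5.8}.

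For rigor I would proceed in this order: (1) treat \eqref{5.3}--\eqref{5.4} as a linear system in $(r_1-1)k_1/h_1^2$ and $(r_2-1)k_2/h_2^2$ and solve explicitly, noting $X\neq Y$ on $U$ by the choice of the two eigenvalues being distinct and by the real analyticity of quasi-Einstein manifolds; (2) substitute these into \eqref{5.5}, cancel the overall factor $X-Y$, and simplify using $h_1'/h_1 = X$, $h_2'/h_2 = Y$, to arrive at \eqref{5.7}; (3) feed \eqref{5.7} into \eqref{5.6} to eliminate $X'+X^2+\frac1m f'X$ and obtain \eqref{5.8}. The main obstacle is bookkeeping: the substitution in step (2) generates a dense polynomial in $X,Y,f',\lambda,X'$ with coefficients in $m,n,r_1,r_2$, and one must carefully use the relation $r_1+r_2 = n-1$ and the integrability identities \eqref{4.1}--\eqref{4.2} to see the massive cancellation down to the three-term expression in \eqref{5.7}. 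It may be cleaner to avoid solving the $2\times2$ system outright and instead take a well-chosen linear combination of \eqref{5.3}, \eqref{5.4}, \eqref{5.5} that kills both curvature unknowns simultaneously; I would look for coefficients $(\alpha,\beta,\gamma)$ with $\alpha X + \gamma\frac{m-1}{m}X = \beta\cdot(\pm Y)$-type relations forcing the $k_i/h_i^2$ terms to drop, which reduces the algebra considerably. Either way, once the curvature terms are gone the remaining simplification is routine, and \eqref{5.8} follows almost immediately from \eqref{5.6} and \eqref{5.7}.
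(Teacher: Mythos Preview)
Your plan has a genuine gap: purely algebraic elimination of the warped-product curvature terms $A:=(r_1-1)k_1/h_1^2$ and $B:=(r_2-1)k_2/h_2^2$ from the system \eqref{5.1}--\eqref{5.6} cannot produce \eqref{5.7}. If you carry out your step~(2) and compare \eqref{5.3} with \eqref{5.5} (both give $AX-BY$), the resulting consistency condition simplifies, after using $r_1+r_2=n-1$, exactly to \eqref{5.6}, which is already in the list. Likewise, adding \eqref{5.3} and \eqref{5.4} gives $A+B$; comparing with \eqref{5.2} again collapses to \eqref{5.6}; and subtracting \eqref{5.3} and \eqref{5.4} reproduces \eqref{5.1}. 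In other words, once you treat $X,Y,P:=X'+X^2+\tfrac1m f'X,f',\lambda$ as parameters, the six equations \eqref{5.1}--\eqref{5.6} impose only two independent linear conditions on $(A,B)$, and the sole nontrivial compatibility relation is \eqref{5.6} itself. No ``massive cancellation'' down to \eqref{5.7} occurs.

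What is missing is an additional differentiation in $s$. The paper's proof exploits the fact that $A'=-2AX$ and $B'=-2BY$ (since $h_i'/h_i$ equal $X,Y$): it rewrites $A+B$ via \eqref{5.6} as an expression in $X,Y,f',\lambda$ alone (equation \eqref{5.10}), differentiates that identity, and compares the result with the value of $AX+BY$ obtained by solving \eqref{5.1} and \eqref{5.9} for $A,B$ separately. This extra derivative step injects genuinely new information---essentially a second-order consequence of the Codazzi condition---and after a lengthy but honest cancellation leaves only the single term $-\tfrac{4(m+n-2)}{n-1}f'\,(X'+X^2+\tfrac1m f'X+XY)$, which forces \eqref{5.7}. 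Your step~(3), deducing \eqref{5.8} from \eqref{5.6} once \eqref{5.7} is known, is correct; but to reach \eqref{5.7} you must differentiate at least one of the curvature identities, not merely combine them algebraically.
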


\begin{proof}
	By \eqref{5.6}, \eqref{5.2} can be rewritten as
	\begin{equation}\label{5.9}
	\begin{aligned}
	(r_1-1)\frac{k_1}{h^2_1} &+   (r_2-1)\frac{k_2}{h^2_2}\\
	=& \ 2 (X'+X^2+\frac{1}{m}f'X)+[ (r_1-1)X^2+(r_2-1)Y^2 ]\\
	&+(n-1)XY-\frac{m+1}{m}f'(X+Y)+2\lambda\\
	=& \ 2 (X'+X^2+\frac{1}{m}f'X+XY+\lambda)\\
	&+\left[ (r_1-1)X+(r_2-1)Y-\frac{m+1}{m}f'\right](X+Y).
	\end{aligned}
	\end{equation}
	By using \eqref{5.6} again, we also have
	\begin{equation}\label{5.10}
	\begin{aligned}
	(r_1-1)\frac{k_1}{h^2_1} & + (r_2-1)\frac{k_2}{h^2_2} \\
=& \ 2(m+1)XY  +[ (r_1-1)X+(r_2-1)Y](X+Y) \\
	&-\frac{2(m-1)}{m(n-1)}f'[(r_1-1)X+(r_2-1)Y] \\
	& -\frac{(m+1)(2m+n-3)}{m(n-1)}f'(X+Y) +\frac{2(m+n-2)}{n-1}\lambda.
	\end{aligned}
	\end{equation}
Then, \eqref{5.1} and \eqref{5.9} imply that 	
	\begin{equation}\label{5.11}
	\begin{aligned}
	(r_1-1)\frac{k_1}{h^2_1}=&X\left[ (r_1-1)X+(r_2-1)Y-\frac{m+1}{m}f'\right]\\
	&+(X'+X^2+\frac{1}{m}f'X+XY+\lambda)
	\end{aligned}
	\end{equation}
and 
	\begin{equation}\label{5.12}
	\begin{aligned}
	(r_2-1)\frac{k_2}{h^2_2}=&Y\left[(r_1-1)X+(r_2-1)Y-\frac{m+1}{m}f'\right]\\
	&+(X'+X^2+\frac{1}{m}f'X+XY+\lambda)
	\end{aligned}
	\end{equation}
	Hence,
	\begin{equation}\label{5.13}
	\begin{aligned}
	(r_1-1)\frac{k_1}{h^2_1}X &+(r_2-1)\frac{k_2}{h^2_2}Y\\
	=&\left[(r_1-1)X+(r_2-1)Y-\frac{m+1}{m}f'\right](X^2+Y^2)\\
	&+\left(X'+X^2+\frac{1}{m}f'X+XY+\lambda\right)(X+Y)\\
	=&\left[(r_1-1)X+(r_2-1)Y-\frac{m+1}{m}f'\right](X+Y)^2\\
	&-2XY\left[(r_1-1)X+(r_2-1)Y-\frac{m+1}{m}f'\right]\\
	&+(X'+X^2+\frac{1}{m}f'X+XY+\lambda)(X+Y).
	\end{aligned}
	\end{equation}

To prepare for the next step in differentiating (4.10), let us first list some basic identities:  
	\begin{equation}\label{5.14}
	(X-Y)'=-(X-Y)\left( X+Y+\frac{1}{m}f'\right),
	\end{equation}
	\begin{equation}\label{5.15}
	(X+Y)'=2\left(X'+X^2+\frac{1}{m}f'X+XY\right) 
	-(X+Y)\left( X+Y+\frac{1}{m}f'\right),
	\end{equation}
	\begin{equation}\label{5.16}
	(XY)'=\left(X'+X^2+\frac{1}{m}f'X+XY\right)(X+Y) 
	-2XY\left( X+Y+\frac{1}{m}f'\right),
	\end{equation}
	\begin{equation}\label{5.17}
	\begin{aligned}
	\left[(r_1-1)X+ (r_2-1)Y\right]'=&(n-3)\left(X'+X^2+\frac{1}{m}f'X+XY\right)(X+Y)\\
	&- [(r_1-1)X+(r_2-1)Y]\left( X+Y+\frac{1}{m}f'\right),
	\end{aligned}
	\end{equation}
	\begin{equation}\label{5.18}
	\begin{aligned}
	f''=&(n-1)\left(X'+X^2+\frac{1}{m}f'X+XY\right)-\frac{1}{m}f'[(r_1-1)X+(r_2-1)Y]\\
	&-(n-1)XY-\frac{1}{m}f'(X+Y)+\frac{1}{m}f'^2+\lambda,
	\end{aligned}
	\end{equation}
	and
	\begin{equation}\label{5.19}
	\begin{aligned}
	\left(X'+X^2+\frac{1}{m}f'X+XY\right) &= (m+1)XY \\
  & -\frac{m-1}{m(n-1)}f'[(r_1-1)X+(r_2-1)Y]\\
	&-\frac{m^2-1}{m(n-1)}f'(X+Y)+\frac{m-1}{n-1}\lambda.
	\end{aligned}
	\end{equation}

In fact, \eqref{5.14} and \eqref{5.15} can be easily derived from  \eqref{4.1}.  On the other hand, 
	\begin{equation*}
	\begin{aligned}
	(XY)'=&X'Y+XY'\\
	=&\left(X'+X^2+\frac{1}{m}f'X\right)(X+Y) 
	-XY\left( X+Y+\frac{2}{m}f'\right) \\
	=&\left(X'+X^2+\frac{1}{m}f'X+XY\right)(X+Y) 
	-2XY\left( X+Y+\frac{1}{m}f'\right),
	\end{aligned}
\end{equation*}
yielding equation \eqref{5.16}. For equation \eqref{5.17}, we have 
\begin{equation*}
	\begin{aligned}
		&\left[(r_1-1)X+(r_2-1)Y\right]'=\left[(r_1-1)X'+(r_2-1)Y'\right]\\
		=&(n-3)\left(X'+X^2+\frac{1}{m}f'X+XY\right)(X+Y)\\
		&- [(r_1-1)X+(r_2-1)Y]\left( X+Y+\frac{1}{m}f'\right). 
	\end{aligned}
\end{equation*}
From \eqref{4.2} and \eqref{4.1}, we have	
\begin{equation*}
	\begin{aligned}
f''=&(n-1)\left( \xi'_a+\xi^2_a\right)+\frac{1}{m}f'^2+\lambda\\
   =&(n-1)\left(X'+X^2+\frac{1}{m}f'X \right)-\frac{1}{m}f'\sum^n_{i=2}\xi_a+\frac{1}{m}f'^2+\lambda\\
=&(n-1)\left(X'+X^2+\frac{1}{m}f'X+XY\right)-\frac{1}{m}f'[(r_1-1)X+(r_2-1)Y]\\
&-(n-1)XY-\frac{1}{m}f'(X+Y)+\frac{1}{m}f'^2+\lambda,	
	\end{aligned}
\end{equation*}
giving rise to equation \eqref{5.18}. Finally, it is readily seen from Equation \eqref{5.6} that \eqref{5.19} holds.

Now, by differentiating \eqref{5.10} and using the above identities, it follows that 	 
	\begin{equation}\label{5.20}
	\begin{aligned}
	-2&\left[ (r_1-1)\frac{k_1}{h^2_1}X + (r_2-1)\frac{k_2}{h^2_2}Y\right]\\
	=&  \frac{-n+3}{m}\left(X'+X^2+\frac{1}{m}f'X+XY\right)(X+Y)\\
	&+ \frac{2}{m}\left(X'+X^2+\frac{1}{m}f'X+XY\right)[(r_1-1)X+(r_2-1)Y]\\
	&-\frac{4(m+n-2)}{n-1}f'\left(X'+X^2+\frac{1}{m}f'X+XY\right)\\
	&-4(m+1)XY\left( X+Y+\frac{1}{m}f'\right)\\
        &-2[(r_1-1)X+(r_2-1)Y](X+Y)\left( X+Y+\frac{1}{m}f'\right)\\
\end{aligned}
	\end{equation}
\begin{equation*}
	\begin{aligned}
        &+2\frac{(m-1)}{m}[(r_1-1)X+(r_2-1)Y]XY\\
	&+\frac{(m+1)(2m+n-3)}{m}(X+Y)XY\\
	&+2\frac{(m-1)}{m^2(n-1)}f'[(r_1-1)X+(r_2-1)Y]^2\\
	&+\frac{(m+1)(2m+n-3)}{m^2(n-1)}f'(X+Y)[(r_1-1)X+(r_2-1)Y]\\ 
	&+2\frac{(m-1)}{m^2(n-1)}f'[(r_1-1)X+(r_2-1)Y](X+Y)\\
	&+\frac{(m+1)(2m+n-3)}{m^2(n-1)}f'(X+Y)^2\\
	& +\frac{(m+1)(2m+n-3)}{m(n-1)}f'(X+Y)^2\\
	& +2\frac{(m-1)}{m(n-1)}f'[(r_1-1)X+(r_2-1)Y](X+Y)\\
	&-2\frac{(m-1)}{m(n-1)}[(r_1-1)X+(r_2-1)Y]\lambda\\
	& -\frac{(m+1)(2m+n-3)}{m(n-1)}(X+Y)\lambda.
	\end{aligned}
	\end{equation*}

	Plugging \eqref{5.13} into \eqref{5.20}, gives us
	\begin{equation}\label{5.21}
	\begin{aligned}
	0=& \frac{2m-n+3}{m}\Big\{ (m+1)XY-\frac{m-1}{m(n-1)}f'[(r_1-1)X+(r_2-1)Y]\\
	&\quad\quad\quad\quad\quad\quad -\frac{m^2-1}{m(n-1)}f'(X+Y)+\frac{m-1}{n-1}\lambda \Bigg\}(X+Y)\\
	&+\frac{2}{m}\Bigg\{(m+1)XY-\frac{m-1}{m(n-1)}f'[(r_1-1)X+(r_2-1)Y]\\
  &\quad\quad\quad\quad-\frac{m^2-1}{m(n-1)}f'(X+Y)+\frac{m-1}{n-1}\lambda \Bigg\}[(r_1-1)X+(r_2-1)Y]\\
	&-\frac{4(m+n-2)}{n-1}f'\left(X'+X^2+\frac{1}{m}f'X+XY\right)\\
	&-4(m+1)XY\left( X+Y+\frac{1}{m}f'\right)\\
	&-2[(r_1-1)X+(r_2-1)Y](X+Y)\left( X+Y+\frac{1}{m}f'\right)\\
	&+2\frac{(m-1)}{m}[(r_1-1)X+(r_2-1)Y]XY\\
	&+\frac{(m+1)(2m+n-3)}{m}(X+Y)XY\\
	&+2\frac{(m-1)}{m^2(n-1)}f'[(r_1-1)X+(r_2-1)Y]^2\\	
\end{aligned}
	\end{equation}
\begin{equation*}
\begin{aligned}
       &+\frac{(m+1)(2m+n-3)}{m^2(n-1)}f'(X+Y)[(r_1-1)X+(r_2-1)Y]\\ 
	&+2\frac{(m-1)}{m^2(n-1)}f'[(r_1-1)X+(r_2-1)Y](X+Y)\\
	&+\frac{(m+1)(2m+n-3)}{m^2(n-1)}f'(X+Y)^2\\
	&+2\frac{(m-1)(m+1)}{m(n-1)}f'[(r_1-1)X+(r_2-1)Y](X+Y)\\
	&+\frac{(m-1)(m+1)(2m+n-3)}{m^2(n-1)}f'(X+Y)^2\\
	&-2\frac{(m-1)}{m(n-1)}[(r_1-1)X+(r_2-1)Y]\lambda\\
	& -\frac{(m+1)(2m+n-3)}{m(n-1)}(X+Y)\lambda\\
	&+2\left\{[(r_1-1)X+(r_2-1)Y]- \frac{m+1}{m}f' \right\}(X+Y)^2\\
	&-4\left[(r_1-1)X+(r_2-1)Y-\frac{m+1}{m}f'\right]XY+2\lambda(X+Y).
	\end{aligned}
	\end{equation*}
	
By simplifying equation \eqref{5.21}, we find that
	all terms cancel out,  except the third term
	\[
	-\frac{4(m+n-2)}{n-1}f'\left(X'+X^2+\frac{1}{m}f'X+XY\right).
	\]
	Hence, \eqref{5.7} holds (when $m+n-2\neq0$). 
Finally, plugging \eqref{5.7} into \eqref{5.19} yields \eqref{5.8}. This competes the proof of Lemma \ref{lemma5.1}. 
\end{proof}

\begin{lemma}\label{lemma5.2} Under the same assumption as in Lemma \ref{lemma5.1}, 
	we have 
	\begin{equation}\label{5.22}
	XY\left[(r_1-1)X+(r_2-1)Y-\frac{m+1}{m}f'\right]=0.
	\end{equation}	
\end{lemma}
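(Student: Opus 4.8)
The plan is to combine the two relations between the warping functions and the Ricci eigenvalues that come out of \eqref{5.11}, \eqref{5.12} and \eqref{5.7}, and then differentiate. Write
\[ P:=(r_1-1)X+(r_2-1)Y-\tfrac{m+1}{m}f', \]
so that the asserted identity \eqref{5.22} is precisely $XYP=0$. First I would substitute \eqref{5.7} into \eqref{5.11} and \eqref{5.12}: the bracket $X'+X^2+\frac1m f'X+XY$ occurring there vanishes, so these collapse to
\[ (r_1-1)\frac{k_1}{h_1^2}=XP+\lambda \qquad\text{and}\qquad (r_2-1)\frac{k_2}{h_2^2}=YP+\lambda. \]

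Next I would differentiate both of these. Since $k_1,k_2$ are constant and $h_1'/h_1=X$, $h_2'/h_2=Y$, the left-hand sides obey $\big((r_i-1)k_i h_i^{-2}\big)'=-2\xi_i\,(r_i-1)k_i h_i^{-2}$, so differentiation yields
\[ X'P+XP'=-2X(XP+\lambda), \qquad Y'P+YP'=-2Y(YP+\lambda). \]
(When $r_1=1$ or $r_2=1$ the corresponding relation already reads $0=XP+\lambda$, and differentiating it is still legitimate --- both sides vanish --- so no case distinction is needed.) Then I would take the difference of these two equations; invoking $X'-Y'=-(X-Y)\big(X+Y+\frac1m f'\big)$ (this is \eqref{5.14}, equivalently \eqref{4.1}) and dividing by $X-Y\ne0$ --- the two Ricci eigenvalues are distinct --- produces a single first-order ODE for $P$,
\[ P'=\Big(\tfrac1m f'-(X+Y)\Big)P-2\lambda. \]
Separately I would take the sum of the two equations; using $X'+Y'=-(X+Y)\big(X+Y+\frac1m f'\big)$ (from \eqref{5.15} together with \eqref{5.7}) this becomes
\[ -(X+Y)\big(X+Y+\tfrac1m f'\big)P+(X+Y)P'=-2(X^2+Y^2)P-2(X+Y)\lambda. \]

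Finally, I would multiply the ODE for $P$ by $X+Y$ and substitute the result into this last equation. Every term containing $\lambda$ and every term containing $f'$ cancels, leaving $-2(X+Y)^2P=-2(X^2+Y^2)P$, i.e. $2XYP=0$, which is exactly \eqref{5.22}. The computation itself is short; the step that needs the most care is this final one --- one must substitute the ODE into the summed equation rather than divide the summed equation by $X+Y$, since $X+Y$ may vanish. Beyond that I anticipate no real difficulty: the only genuinely structural input, namely that $(r_i-1)k_ih_i^{-2}$ satisfies the above elementary ODE because of the warped-product form \eqref{3.10}, has already been packaged into \eqref{5.11}--\eqref{5.12} and \eqref{5.7}.
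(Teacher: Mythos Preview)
Your argument is correct and, in fact, tidier than the paper's. The paper does \emph{not} go through \eqref{5.11}--\eqref{5.12} at this point; instead it rewrites \eqref{5.8} as
\[
f'[(r_1-1)X+(r_2-1)Y]+(m+1)f'(X+Y)=\tfrac{m(m+1)(n-1)}{m-1}XY+m\lambda,
\]
derives an auxiliary expression \eqref{5.28} for $f''$, differentiates the displayed relation, and then substitutes \eqref{5.23}--\eqref{5.25} together with \eqref{5.28} to extract \eqref{5.22} after a longer cancellation. Your route bypasses both \eqref{5.8} and the formula for $f''$ entirely: by feeding \eqref{5.7} into \eqref{5.11}--\eqref{5.12} you recognise $(r_i-1)k_ih_i^{-2}=\xi_iP+\lambda$ as quantities whose derivatives are dictated by the warped-product structure alone, and the sum/difference trick then isolates $XYP$ in two lines. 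The payoff is that you never touch the constants $m,n,r_1,r_2$ individually during the computation, whereas the paper's differentiation of \eqref{5.27} carries all of these coefficients through before they cancel. Both arguments ultimately rest on the same inputs from Lemma~\ref{lemma5.1}; yours just packages them more efficiently.
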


\begin{proof}
	
	 First of all, by \eqref{5.7} and \eqref{5.8} in Lemma \ref{lemma5.1}, equations \eqref{5.16}-\eqref{5.18} become 
	\begin{equation}\label{5.23}
	(X-Y)'=-(X-Y)\left( X+Y+\frac{1}{m}f'\right),
	\end{equation}
	\begin{equation}\label{5.24}
	(XY)'=-2XY\left( X+Y+\frac{1}{m}f'\right),
	\end{equation}
	\begin{equation}\label{5.25}
	[(r_1-1)X+(r_2-1)Y]'=- [(r_1-1)X+(r_2-1)Y]\left( X+Y+\frac{1}{m}f'\right),
	\end{equation}
	and
	\begin{equation}\label{5.26}
	\begin{aligned}
	f''=&-\frac{1}{m}f'[(r_1-1)X+(r_2-1)Y]\\
	&-(n-1)XY-\frac{1}{m}f'(X+Y)+\frac{1}{m}f'^2+\lambda.
	\end{aligned}
	\end{equation}
	On the other hand, \eqref{5.8} can be rewritten as  
	\begin{equation}\label{5.27}
	f'[(r_1-1)X+(r_2-1)Y]+(m+1)f'(X+Y)
	=\frac{m(m+1)(n-1)}{(m-1)}XY+m\lambda.
	\end{equation}
	Hence, by substituting\eqref{5.27} into \eqref{5.26}, we get 
	\begin{equation}\label{5.28}
	f''=f'(X+Y)-\frac{2m(n-1)}{m-1}XY+\frac{1}{m}f'^2.
	\end{equation}
	Moreover, by differentiating \eqref{5.27},  then plugging \eqref{5.22}-\eqref{5.25} and \eqref{5.28} into it,
	and simplifying, we obtain \eqref{5.22}.
\end{proof}

Next, we divide our discussions into two cases. 

\subsection {{\bf Case I:} Either $X$ or $Y$ vanishes}

In this case, without loss of generality,  we assume that $X\neq0$ and $Y=0$. Then, as $Y=h_2'/h_2$, it follows that  $h_2(s)$ is constant.

\medskip
{\bf Claim:} $r_1\leq (n-3)$.

\medskip
We argue by contradiction. Suppose not. Then,  since $r_1\leq (n-3)$ is equivalent to $r_2\geq2$, we may assume that $r_2=1$. It then follows from \eqref{4.4}, $r_2=1$ and $Y=0$, that $ \lambda=\lambda_{\alpha}=0$.
However, \eqref{5.8} and $Y=0$ give us
\[
\lambda=\frac{m+r_1}{m}f'X=\frac{m+n-2}{m}f'X.
\]
Thus, when $m\neq 2-n$, either $f'$ or $X$ vanishes, which is a contradiction because, by assumption,   $X\neq 0$ and $(M^n, g)$ is not Einstein.

\begin{theorem} \label{two1}
	Let $(M^n, g, f)$, $n\geq 4$, be an $n$-dimensional quasi-Einstein manifold satisfying \eqref{qe} with harmonic Weyl curvature, where $m\notin{\{\pm1,\,(2-n),\, \infty\}}$. Suppose that, for each point $p\in M_A \cap \{ \nabla f \neq 0  \}$, there exists a neighborhood $U\subset M_A \cap \{ \nabla f \neq 0  \}$ of $p$ such that the Ricci eigenvalues $\lambda_2, \cdots, \lambda_n$  have exactly two distinct vlaues, $\lambda_a$ and $\lambda_\alpha$, of multiplicities $r_1=r$ and $r_2:=n-r-1$, respectively. If  $Y\!:=\xi_{\alpha}=0$, then $(U, g_{|U})$ is isometric to a domain in 
	\[ \left(\mathbb{R}^{1}\times N^{r}_1\times N_2^{n-r-1}, \ ds^2 + h^2_1(s) \tilde{g}_1+ \tilde{g_2}\right).
	\]
	Here,  $\left(N_2^{n-r-1},\tilde{g_2}\right) $ is an Einstein manifold with  Einstein constant $\lambda$; $(N^r_1, \tilde{g}_{1})$ is an Einstein manifold with Einstein constant $(r -1)k_1$ if $2 \leq r\leq (n-3)$, or $(N^1, \tilde{g}_{1})$ is given by $(L^1, \tilde{g}_{1}=dt^2)$ if $r=1$. 
	Moreover, 
\[\left(W^{r+1}, \bar{g}\right) =\left(\mathbb{R}^{1}\times N^{r}_1,\, ds^2 + h^2_1\tilde{g}_1\right) \] is a (D-flat) quasi-Einstein manifold satisfying \eqref{qe} which is also Einstein, with the Einstein constant $\rho\neq0$ satisfying $\frac{m+r}{r}\rho=\lambda$, and 
	of one of the following cases with $\Lambda:=\frac{1}{r}\rho$: 

\begin{enumerate}	
	\item[{\rm (i)}] 
	$\Lambda>0$, 
	$h_1=\sin({\sqrt{\Lambda}s})$,
	$f=-m\ln(\cos\sqrt{\Lambda}s)$ (up to an additive constant) and $k_1=\Lambda$. 
	In particular, for $1\leq r\leq 3$, $(M^n, g)$ is locally isometric to 
	a domain in $\mathbb{D}^{r+1} \times N_2^{n- r-1}$, 
	where $\mathbb{D}^{r+1}$ is the northern hemisphere 
	in the $(r+1)$-dimensional sphere $\mathbb{S}^{r+1} $ with the metric
	$ds^2+\sqrt{\Lambda}\sin^2\Big(\sqrt{\Lambda}s\Big)g_{\mathbb{S}^{r}}$, and
	$s\in(0,\frac{\pi}{2})$ is the distance function 
	on $\mathbb{B}^{r+1}$ from the north pole.
	
	\smallskip
	\item[{\rm (ii)}]   $\Lambda <0$, $h_1=\cosh({\sqrt{-\Lambda}s})$, $f=-m\ln|\sinh\sqrt{-\Lambda}s|$ and $k_1=\Lambda$.
	In particular, for $1\leq r\leq 3$, $(M^n, g)$ is locally isometric to 
	a domain in $\mathbb{D}^{ r+1} \times N_2^{n- r-1}$,
	where $\mathbb{B}^{ r+1}$ is the set $\{(s,t)|s<0\}$ 
	in the $(r+1)$-dimensional hyperbolic space $\mathbb{H}^{ r+1}$
	with the metric
	$ds^2+ \sqrt{-\Lambda}\cosh^2\left( \sqrt{-\Lambda}s\right) g_{\mathbb{H}^{ r}}$, 
	and $s\in(-\infty,0)$ 
	can be viewed as the signed distance function on $\mathbb{B}^{r+1}$ from the line $\{(s,t)|s = 0\}$.
	
	\smallskip	
	\item[{\rm (iii)}] 	
	$\Lambda <0$, $h_1=e^{\sqrt{-\Lambda}s}$, $f=-m\sqrt{-\Lambda}s$ and
	$(N^{r}, \tilde{g}_{1})$ is Ricci flat ($k_1=0$) for  $1\leq r\leq 3$.
	In particular, for $1\leq r\leq 3$, $(M^n, g)$ is locally isometric to  
	a domain in $\mathbb{R}^{ r+1} \times N_2^{n-r-1}$, 
	where $ds^2+e^{2\sqrt{-\Lambda}s}g_{\mathbb{R}^{ r}}$ is the metric on $\mathbb{R}^{ r+1}$ 	and $s\in(-\infty,+\infty)$ can be viewed as the signed distance function on $\mathbb{R}^{ r+1}$ from any base point.

	\smallskip
	\item[{\rm (iv)}] $\Lambda <0$, $h_1=\sinh({\sqrt{-\Lambda}s})$, $f=-m\ln(\cosh\sqrt{-\Lambda}s)$ and $k_1=-\Lambda$.
	In particular, for $1\leq r\leq 3$, $(M^n, g)$ is locally isometric to 
	a domain in $\mathbb{H}^{r+1} \times N_2^{n- r-1}$, 
	where $\mathbb H^{ r+1} $ is the $(r+1)$-dimensional hyperbolic space with the metric
	$ds^2+ \sqrt{-\Lambda}\sinh^2\left( \sqrt{-\Lambda}s\right) g_{\mathbb{S}^{ r}}$, 
	and $s\in(-\infty,0)$ 
	can be viewed as the signed distance function on $\mathbb{H}^{r+1}$ from any base point.
\end{enumerate}	
	
\end{theorem}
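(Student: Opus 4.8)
The plan is to feed the hypothesis $Y:=\xi_\alpha=0$ into the structure already assembled in Section~4. By Theorem~\ref{mulwar} the metric on $U$ is a multi-warped product $ds^2+h_1^2\tilde g_1+h_2^2\tilde g_2$ with $\tilde g_\mu$ Einstein of constant $(r_\mu-1)k_\mu$, and by Lemmas~\ref{lemma5.1}--\ref{lemma5.2} the warping functions obey \eqref{5.7} and \eqref{5.8}. Since $Y=h_2'/h_2=0$, the function $h_2$ is constant, so absorbing it into $\tilde g_2$ gives the stated form $ds^2+h_1^2\tilde g_1+\tilde g_2$; moreover \eqref{3.14} for the $\alpha$-block (with $\xi_\alpha=0$) forces $\lambda=(r_2-1)k_2/h_2^2$, i.e. $(N_2,\tilde g_2)$ is Einstein with constant $\lambda$, and the Claim established just above gives $r_2=n-r-1\ge 2$, so this factor is genuine. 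Also \eqref{5.7} collapses to $X'+X^2+\tfrac1m f'X=0$, while $X=\xi_a\neq0$ in Case~I.

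Next I would pin down the constant. Setting $Y=0$ in \eqref{5.8} yields $\lambda=\tfrac{m+r}{m}f'X$; differentiating (the left side being constant) shows $f'X$ is constant, say $f'X=m\Lambda$, and hence $\lambda=(m+r)\Lambda$. From \eqref{3.13} one gets $\lambda_1=-r(X'+X^2)=\tfrac{r}{m}f'X=r\Lambda$ (using the reduced \eqref{5.7}), and $\lambda_a=-f'X+\lambda=r\Lambda$ as well, so $\lambda_1=\lambda_a=r\Lambda=:\rho$. Thus the warped factor $\bigl(W^{r+1},\bar g\bigr)=\bigl(\mathbb{R}\times N_1,\ ds^2+h_1^2\tilde g_1\bigr)$ has all its Ricci-eigenvalues equal, so it is Einstein with constant $\rho=\tfrac{r}{m}f'X$; since $f'\neq0$ on $U$ and $X\neq0$, we get $\rho\neq0$ and $\tfrac{m+r}{r}\rho=\lambda$. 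A short computation with $\mathrm{Hess}\,f=f''\,ds^2+Xf'\,h_1^2\tilde g_1$, together with $Xf'=m\Lambda=\lambda-\rho$ and $f''-\tfrac1m f'^2=\lambda-\lambda_1$ from \eqref{3.13}, shows $f=f(s)$ satisfies \eqref{qe} on $W^{r+1}$ with the same $m,\lambda$; being an Einstein quasi-Einstein manifold, $W^{r+1}$ is $D$-flat by the argument of Theorem~\ref{same}.

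It remains to integrate the ODEs. With $X=h_1'/h_1$, the reduced \eqref{5.7} is $h_1''+\Lambda h_1=0$, a Jacobi equation with constant $\Lambda$ and $\rho\neq0\Rightarrow\Lambda\neq0$; its positive solutions are, up to translating $s$ and rescaling $\tilde g_1$, $h_1=\sin(\sqrt\Lambda s)$ when $\Lambda>0$, and $h_1=\cosh(\sqrt{-\Lambda}s)$, $e^{\sqrt{-\Lambda}s}$, or $\sinh(\sqrt{-\Lambda}s)$ when $\Lambda<0$. For $r\ge2$, \eqref{3.14} for the $a$-block (with $\sum_{i=2}^n\xi_i=rX$) gives $(r-1)k_1/h_1^2=(r-1)(\Lambda+X^2)$, hence $k_1=\Lambda h_1^2+(h_1')^2$, which evaluates to $\Lambda,\ \Lambda,\ 0,\ -\Lambda$ in the four cases; for $r=1$ the fiber term drops and $N_1=(L^1,dt^2)$. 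Integrating $f'=m\Lambda/X$ in each case produces $f=-m\ln(\cos\sqrt\Lambda s)$, $-m\ln|\sinh\sqrt{-\Lambda}s|$, $-m\sqrt{-\Lambda}s$, $-m\ln(\cosh\sqrt{-\Lambda}s)$ up to an additive constant. Finally, when $1\le r\le3$ an Einstein $r$-manifold is a space form of sectional curvature $k_1$, so $W^{r+1}$ is identified with the hemisphere of $\mathbb{S}^{r+1}$ in case (i) and with the stated models of $\mathbb{H}^{r+1}$ in cases (ii)--(iv), with $s$ the appropriate (signed) distance function.

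The main difficulty here is not a single hard estimate but careful bookkeeping: one must check that $\Lambda$ is genuinely constant rather than merely a function of $s$, track the translation/rescaling freedom in $(s,h_1,f)$ so the normalized formulas match exactly, and verify in each of the four sub-cases that the sign of $k_1$ is consistent both with the curvature of the claimed model space and with the interval of $s$ on which $h_1>0$. Once these are settled, the verification of \eqref{qe} and $D$-flatness on $W^{r+1}$ and the geometric identification for $r\le3$ are routine.
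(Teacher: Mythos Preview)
Your proposal is correct and follows essentially the same route as the paper: plug $Y=0$ into \eqref{5.7}--\eqref{5.8} to get $X'+X^2+\Lambda=0$ and $\lambda=\tfrac{m+r}{m}f'X$, identify $\lambda_1=\lambda_a=\rho=r\Lambda$, solve the resulting Jacobi/Riccati ODE for $h_1$, read off $k_1$ from \eqref{3.14}, and integrate $f'=m\Lambda/X$. The only cosmetic differences are that the paper solves for $X$ directly (Riccati form) while you solve $h_1''+\Lambda h_1=0$, and the paper establishes constancy of $\rho$ via Schur's lemma for $r\ge 2$ and via $\lambda=\tfrac{m+1}{m}f'X$ for $r=1$, whereas you invoke $\lambda=\tfrac{m+r}{m}f'X$ uniformly---note this needs $m+r\neq0$, which is automatic for $m>1$ but not for all $m\notin\{\pm1,2-n,\infty\}$, so in the exotic case $m=-r$ you should fall back on the Schur argument you already have in hand.
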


\begin{proof} 
	It follows from \eqref{5.8} and $Y=0$ that 
 \begin{equation}\label{f'X}
	\lambda=\frac{m+r}{m}f'X. \qquad (r=r_1)
 \end{equation}
	On the other hand, from \eqref{5.28}  and $Y=0$, we have 
	\[
	f''=f'X+\frac{1}{m}f'^2,	
		\]
	and then, by \eqref{3.13}, 
	\[
	\lambda_{1}=-f''+\frac{1}{m}f'^2+\lambda=-f'X+\lambda=\frac{r}{m}f'X.
	\]	
Moreover, it follows from the definitions of $X=\xi_a$ and $Y=\xi_{\alpha}$ (or \eqref{4.3} and \eqref{4.4})  that		
 \begin{equation*}
\lambda_a= -f'X+\lambda 
=\frac{r}{m}f'X 
\qquad{\rm and} \qquad \lambda_{\alpha}=\lambda.
 \end{equation*}
Therefore, the Ricci tensor is given by 
 \begin{equation}\label{R11=Raa}	
 R_{11} = R_{aa} =\frac{r}{m}f'X,
	 \end{equation}
\[ R_{\alpha\alpha}=\lambda, \quad R_{ij} =0\  (i \neq j).\]

In addition, we observe that 
$\left(W^{r+1}, \bar{g}\right) =\left(\mathbb{R}^{1}\times N^{r}_1,\, ds^2 + h^2_1\tilde{g}_1\right) $ is necessarily Einstein. 
In fact, for $r>2$, \eqref{R11=Raa} shows that $\frac{r}{m}f'X$ is a (nonzero) constant, denoted by $\rho$, hence  $W^{r+1}$ is Einstein with the Einstein constant $\rho$.  Also, 
\[
\lambda=\frac{m+r}{m}f'X=\frac{m+r}{r}\rho.
\]
For  $r=1$, \eqref{f'X} implies that $\lambda \neq 0$ since $m\neq-1$ and $f'X\neq0$. Thus 
	\[ R_{11} = R_{aa} =\frac{1}{m}f'X=\frac{1}{m+1}\lambda=\rho,\]
and $W^{2}$ is Einstein with the Einstein constant $\rho$. 

Next, from \eqref{4.2} and \eqref{4.3}, we have 
\[
f_{1,1}=f''=\frac{1}{m}f'^2+f'X, \quad f_{a,a}=f'X, \quad f_{a,b}=f_{1,a}=0.
\]
Therefore, $\left(W^{r+1}, \bar{g}, f\right)$ is also a D-flat quasi-Einstein manifold satisfying \eqref{qe}.
	
Furthermore, by \eqref{5.7}, 
\[
0=X'+X^2+\frac{1}{m}f'X=X'+X^2+\frac{1}{r}\rho,
\]
i.e.,
\[
X'+X^2+\Lambda=0,
\]
where $\Lambda=\frac{1}{r}\rho\neq0$. 

By solving the above ODE in $X$, we get either $X=\sqrt{-\Lambda}$ (if $\Lambda<0$) or 
\begin{align}\label{cases}
  X=\left\{
      \begin{array}{ll}
        -\sqrt{\Lambda}\tan(\sqrt{\Lambda}s+C), & \hbox{$\Lambda>0$,} \\
        \sqrt{-\Lambda}\tanh(\sqrt{-\Lambda}s+C), & \hbox{$\Lambda<0$,} \\
        \sqrt{-\Lambda}\coth(\sqrt{-\Lambda}s+C), & \hbox{$\Lambda<0$,}
           \end{array}
    \right.
\end{align}
where $C$ is an arbitrary constant. By choosing a suitable constant $C$ in \eqref{cases}, and using the relations $f'X=\frac{r}{m}\rho=m\Lambda$  and $X=\frac{h'_1}{h_1}$, we can obtain $X$, $f$ and $h_1$ as follows:

{\rm (i)~}  For $\Lambda>0$,
\[
X=\sqrt{\Lambda}\cot(\sqrt{\Lambda}s), \qquad	h_1=\sin({\sqrt{\Lambda}s}), 
\qquad f=-m\ln(\cos\sqrt{\Lambda}s).
\]

{\rm (ii)~}  For $\Lambda<0$,
\[
X=\sqrt{-\Lambda}\tanh(\sqrt{-\Lambda}s), \quad	h_1=\cosh({\sqrt{-\Lambda}s}),  
\quad f=-m\ln|\sin\sqrt{-\Lambda}s|.
\]

{\rm (iii)~}  For $\Lambda<0$,
\[
X=\sqrt{-\Lambda}, \quad	h_1=e^{\sqrt{-\Lambda}s},  \quad f=-m\sqrt{-\Lambda}s.
\]

{\rm (iv)~}  For $\Lambda<0$,
\[
X=\sqrt{-\Lambda}\coth(\sqrt{-\Lambda}s), \quad	h_1=\sinh({\sqrt{-\Lambda}s}), 
\quad f=-m\ln(\cosh\sqrt{-\Lambda}s).
\]

By making use of the above information, it follows from \eqref{4.3} that		
\begin{equation*}
	\begin{aligned}
		-f'X+\lambda=&-\left( X'+X^2+\frac{1}{m}f'X\right)+\frac{1}{m}f'X+X^2+(r-1)\frac{k_1}{h^2_1}-rX^2\\
		=&\Lambda+(r-1)\frac{k_1}{h^2_1}-(r-1)X^2
	\end{aligned}	
\end{equation*}	
and 	
\[
-f'X+\lambda=-f'X+\frac{m+r}{m}f'X=\frac{r}{m}f'X=r\Lambda,
\]
which imply that	
\[
k_1=(X^2+\Lambda)h^2_1.
\]
Therefore, $k_1=\Lambda$ for (i) and (ii), while $k_1=-\Lambda$ for (iv). 
For (iii), $k_1=0$, and then $(N^r_1, \tilde{g}_{1})$ is Ricci flat.	 This finishes the proof of Theorem~\ref{two1}
\end{proof}

\subsection {{\bf Case II:}   Neither $X$ nor $Y$ vanishes} 
For this subcase, we shall first show the following useful lemma, which will lead to the desired local structure result. 

\begin{lemma}\label{lemma5.1.1}
	Let $(M^n, g, f)$, $n\geq 4$, be an $n$-dimensional quasi-Einstein manifold satisfying \eqref{qe} with harmonic Weyl curvature and $m\notin{0, \, \{\pm1,\,(2-n),\, \infty\}}$.
	Suppose that, in a neighborhood $U$ of $p$ in $M_A \cap \{ \nabla f \neq 0  \}$, 
	 the Ricci eigenvalues 
	$\lambda_2, \cdots, \lambda_n$  have exactly two distinct values, 
	denoted by $\lambda_a$ and $\lambda_\alpha$, of multiplicities $r_1$ and $r_2:=n-r_1-1$, respectively.
	If neither $X$ nor $Y$ vanishes,
	then
	\begin{equation}\label{5.1.1}
	(r_1-1)X+(r_2-1)Y-\frac{m+1}{m}f'=0,
	\end{equation}
	\begin{equation}\label{5.1.2}
	(r_1-1)\frac{k_1}{h^2_1}= (r_2-1)\frac{k_2}{h^2_2}=\lambda=0,
	\end{equation}
	\begin{equation}\label{5.1.3}
	f'\left( X+Y+\frac{1}{m}f'\right)=\frac{m(n-1)}{m-1}XY,
	\end{equation}	
	\begin{equation}\label{5.1.4}
	\begin{aligned}
	\frac{(r_1-1)(m+r_1)}{m+1}X^2 & +\frac{(r_2-1)(m+r_2)}{m+1}Y^2\\
	&+2\left[	\frac{(r_1-1)(r_2-1)}{m+1}-\frac{m+n-2}{m-1} \right]XY=0
	\end{aligned}
	\end{equation}
	and	
	\begin{equation}\label{5.1.5}
	R=-\frac{m-1}{m}f'^2.
	\end{equation}	
\end{lemma}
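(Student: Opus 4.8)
The plan is to combine the structural identities from Lemmas~\ref{lemma5.1} and \ref{lemma5.2} under the standing hypothesis $X\neq 0$, $Y\neq 0$. By Lemma~\ref{lemma5.2}, the product $XY\left[(r_1-1)X+(r_2-1)Y-\tfrac{m+1}{m}f'\right]=0$, and since $XY\neq 0$ by assumption, this immediately yields \eqref{5.1.1}. Feeding \eqref{5.1.1} back into \eqref{5.11} and \eqref{5.12} collapses the right-hand sides to $X'+X^2+\tfrac1m f'X+XY+\lambda$, which vanishes by \eqref{5.7}; hence $(r_1-1)\tfrac{k_1}{h_1^2}=(r_2-1)\tfrac{k_2}{h_2^2}=0$. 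To also get $\lambda=0$ in \eqref{5.1.2}, I would substitute \eqref{5.1.1} into \eqref{5.8} (or equivalently \eqref{5.27}), which after using \eqref{5.1.1} to eliminate the $f'[(r_1-1)X+(r_2-1)Y]$ term will express $\lambda$ purely in terms of $XY$, $f'(X+Y)$ and $f'^2$; comparing with \eqref{5.28} and using $X\neq Y$ (real analyticity) forces $\lambda=0$. This is the step I expect to require the most care, since several of the ODE identities must be cross-checked to make sure no hidden relation is being circularly invoked.

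With $\lambda=0$ in hand, \eqref{5.8} simplifies to a relation among $XY$, $f'(X+Y)$ and $f'[(r_1-1)X+(r_2-1)Y]$; substituting \eqref{5.1.1} to remove the last term gives precisely \eqref{5.1.3}, namely $f'\!\left(X+Y+\tfrac1m f'\right)=\tfrac{m(n-1)}{m-1}XY$. Then I would derive \eqref{5.1.4} by taking \eqref{5.1.1}, solving it for $f'=\tfrac{m}{m+1}\left[(r_1-1)X+(r_2-1)Y\right]$, and inserting this into \eqref{5.1.3}; expanding and collecting the coefficients of $X^2$, $Y^2$, $XY$ should reproduce the stated quadratic form, with the $\tfrac{m+n-2}{m-1}$ term coming from the right side of \eqref{5.1.3}. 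The arithmetic here is routine but one must be careful with the $\tfrac{1}{m}f'^2$ contribution, since $f'^2=\tfrac{m^2}{(m+1)^2}\left[(r_1-1)X+(r_2-1)Y\right]^2$ also expands into $X^2,Y^2,XY$ terms.

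Finally, for \eqref{5.1.5}, I would use $\lambda=0$ together with the scalar curvature identity \eqref{2.23} (or directly the trace of \eqref{qe} combined with \eqref{3.13}). Concretely, with $\lambda=0$, equation \eqref{2.23} gives $\nabla\!\left[R+\tfrac{m-1}{m}|\nabla f|^2\right]=\tfrac{2}{m}(|\nabla f|^2-\Delta f)\nabla f$; along the flow direction $s$ this becomes an ODE. Alternatively, summing \eqref{3.13} and \eqref{3.14} over $i$ yields $R=\lambda_1+\sum_{a=2}^n\lambda_a$, and using $\lambda_a=-f'\xi_a+\lambda=-f'\xi_a$ together with $\lambda_1=-f''+\tfrac1m f'^2$ and the formula for $f''$ from \eqref{5.28} (with $\lambda=0$), one can solve directly: $R = -f'' + \tfrac1m f'^2 - f'\sum_{a=2}^n\xi_a$, and \eqref{4.16}-type relations plus \eqref{5.1.1} reduce $\sum\xi_a$ and $f''$ to multiples of $f'^2$ and $XY$; the $XY$ terms must cancel by \eqref{5.1.3}, leaving $R=-\tfrac{m-1}{m}f'^2$. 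I expect the bookkeeping in this last reduction — keeping track of $\sum_{a=2}^n\xi_a$ versus $(r_1-1)X+(r_2-1)Y$ and eliminating $XY$ via \eqref{5.1.3} — to be the second delicate point, but no genuinely new idea beyond the identities already assembled is needed.
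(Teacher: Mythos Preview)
Your overall strategy matches the paper's, but two points need correction.

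First, a slip: \eqref{5.7} says $X'+X^2+\tfrac{1}{m}f'X+XY=0$, not $X'+X^2+\tfrac{1}{m}f'X+XY+\lambda=0$. Hence feeding \eqref{5.1.1} and \eqref{5.7} into \eqref{5.11} and \eqref{5.12} gives $(r_1-1)\tfrac{k_1}{h_1^2}=(r_2-1)\tfrac{k_2}{h_2^2}=\lambda$, not $=0$; you still need $\lambda=0$ separately to finish \eqref{5.1.2}.

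Second, your plan for proving $\lambda=0$ is incomplete as written. After substituting \eqref{5.1.1} into \eqref{5.27} you obtain
\[
(m+1)\,f'\!\left(X+Y+\tfrac{1}{m}f'\right)=\tfrac{m(m+1)(n-1)}{m-1}\,XY+m\lambda,
\]
but ``comparing with \eqref{5.28}'' does not by itself yield a constraint: \eqref{5.28} is a formula for $f''$, and you need a \emph{second} expression for $f''$ to compare against it. That second expression comes from differentiating \eqref{5.1.1} itself and using \eqref{5.25}, which (after substituting \eqref{5.1.1} once more) gives $f''=-f'(X+Y)-\tfrac{1}{m}f'^2$. Equating this with \eqref{5.28} yields precisely \eqref{5.1.3}, with no $\lambda$ present; plugging \eqref{5.1.3} into the displayed identity above then forces $m\lambda=0$. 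So the paper's logical order is the reverse of yours: derive \eqref{5.1.3} first by differentiating \eqref{5.1.1}, then deduce $\lambda=0$. Your route---getting \eqref{5.1.3} from \eqref{5.8} after setting $\lambda=0$---is circular unless you supply this missing differentiation step. Once this is fixed, the remainder of your plan for \eqref{5.1.4} and \eqref{5.1.5} is correct and agrees with the paper (though the paper's computation of $R$ via $R=\lambda_1+r_1\lambda_a+r_2\lambda_\alpha$ and \eqref{5.1.1} is more direct than going through \eqref{2.23}).
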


\begin{proof}
	First of all, \eqref{5.1.1} holds from Lemma \ref{lemma5.2}. 
Putting \eqref{5.7} and \eqref{5.1.1} into \eqref{5.11} and \eqref{5.12} yields
	   \begin{equation}\label{5.37}
        (r_1-1)\frac{k_1}{h^2_1}=\lambda \qquad {\rm and} 
       \qquad  (r_2-1)\frac{k_2}{h^2_2}=\lambda,
   \end{equation} 
respectively.
Then, by differentiating \eqref{5.1.1} and using \eqref{5.25} and \eqref{5.28}, it follows that 	 	
		\begin{equation*}
				\begin{aligned}
-& [(r_1-1)X+(r_2-1)Y]\left( X+Y+\frac{1}{m}f'\right)\\
& =\frac{m+1}{m}\left\lbrace f'( X+Y)-\frac{2m(n-1)}{m-1}XY+\frac{1}{m}f'^2\right\rbrace.
	\end{aligned}
	\end{equation*}
Simplifying the above equation, we get \eqref{5.1.3}.	

Next, by plugging \eqref{5.1.1} into \eqref{5.27}, we have
  \begin{equation}\label{5.38}
 \frac{m+1}{m}f'^2+(m+1)f'(X+Y) = \frac{m(m+1)(n-1)}{(m-1)}XY+m\lambda
 \end{equation}
On the other hand, by using \eqref{5.1.3}, the left hand side of \eqref{5.37} is 
	\begin{equation}\label{5.39}
	\begin{aligned}
        \frac{m+1}{m}f'^2+(m+1)f'(X+Y) = & (m+1)f'\left( X+Y+\frac{1}{m}f'\right) \\
       = & \frac{m(m+1)(n-1)}{m-1}XY.
		\end{aligned}
\end{equation}
Thus, it follows immediately from \eqref{5.38} and \eqref{5.39} that $\lambda=0$. This and \eqref {5.37} together imply \eqref{5.1.2}.

By using \eqref{5.1.1} and \eqref{5.1.3}, we can also get\eqref{5.1.4}.	
		Finally,  by substituting \eqref{5.1.3} into \eqref{5.28}, we have 
	\begin{equation}\label{5.29}
	f''=-f'(X+Y)-\frac{1}{m}f'^2,
	\end{equation}
	Combining  \eqref{5.29} and \eqref{4.2} yields
	\[
	\lambda_{1}=f'(X+Y)+\frac{2}{m}f'^2.
	\]
	Moreover, \eqref{4.3} and \eqref{4.4} imply that $\lambda_{a}=-f'X$ and $\lambda_{\alpha}=-f'Y$.
	Hence, 
	\begin{equation*}
	\begin{aligned}
	R=&\lambda_{1}+r_1\lambda_{a}+r_2\lambda_{\alpha}\\
	=&f'(X+Y)+\frac{2}{m}f'^2-r_1f'X-r_2f'Y\\
	=&-f'\left[ (r_1-1)X+(r_2-1)Y\right]+\frac{2}{m}f'^2\\
	=&-\frac{m-1}{m}f'^2,
	\end{aligned}
	\end{equation*}	
	where we have used \eqref{5.1.1} in the last equality.  This completes the proof of the lemma. 
\end{proof}

\begin{theorem}\label{two2}
	Let $(M^n, g, f)$, $n\geq 4$, be an $n$-dimensional quasi-Einstein manifold satisfying \eqref{qe} with harmonic Weyl curvature, where $m\notin{\{\pm1,\,(2-n),\, \infty\}}$. Suppose that, for each point $p\in M_A \cap \{ \nabla f \neq 0  \}$, there exists a neighborhood $U\subset M_A \cap \{ \nabla f \neq 0  \}$ of $p$ such that the Ricci eigenvalues $\lambda_2, \cdots, \lambda_n$  have exactly two distinct vlaues, $\lambda_a$ and $\lambda_\alpha$, of multiplicities $r_1$ and $r_2:=n-r_1-1$, respectively. If neither $X$ nor $Y$ vanishes, then we have $\lambda=0$ and $(U, g_{|U})$ is isometric to one of the following:
	\begin{enumerate}
	
	\item [(i)]
	a domain in 
	\[
	\left( L\times N^{r_1}_1\times N^{r_2}_2, \ 
	ds^2 + (b_1s+c_1)^{\frac{2}{b_1}} \tilde{g}_{1}+(b_2s+c_2)^{\frac{2}{b_2}} \tilde{g}_{2}\right), 
	\]
	with $f=b_3\log (b_1s+c_1)$, modulo a constant, for some constant $c_1$.
	Here, $L$ and $N^1_1$ (when $ r_1=1$) are open intervals; $(N^{r_1}_1, \tilde{g}_{1})$ (for $2 \leq r_1\leq n-2 $) and $(N_2, \tilde{g}_{2})$ are both Ricci-flat;
	and $b_i~(1\leq i\leq 3)$ are constants, which are determined by the coefficients in \eqref{5.1.4} and depend only on $n$, $m$ and $r_1$.

	\smallskip
	\item [(ii)] a domain in
	\[
	\left( \mathbb R\times N^{r_1}_1\times N^{r_2}_2, \ 
	ds^2 + e^{2c_3s} \tilde{g}_{1}+e^{2b_4c_3s} \tilde{g}_{2}\right)
	\]
for $2 \leq r_1\leq n-3 $ and some constant $c_3\neq0$, with $f=\frac{mc_3[(r_1-1)+(r_2-1)b_4]}{m+1}s$ modulo a constant.	
Here, $(N_1, \tilde{g}_{1})$ and $(N_2, \tilde{g}_{2})$ are both Ricci-flat,
and the constant $b_4\notin\{0,1\}$ is determined by the coefficients in \eqref{5.1.4}, depending only on $n$, $m$ and $r_1$.
	
	\end{enumerate}
\end{theorem}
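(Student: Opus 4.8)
The plan is to feed the algebraic output of Lemma~\ref{lemma5.1.1} into an explicit ODE analysis. Since we are in Case~II, neither $X$ nor $Y$ vanishes, so Lemma~\ref{lemma5.1.1} gives $\lambda=0$ together with the relations \eqref{5.1.1}--\eqref{5.1.5}; combined with Theorem~\ref{mulwar} (applied with two distinct Ricci eigenvalues among $\lambda_2,\dots,\lambda_n$), this yields over a suitable neighborhood $U$ a doubly warped product
\[
g_{|U}=ds^2+h_1^2(s)\,\tilde g_1+h_2^2(s)\,\tilde g_2,\qquad X=h_1'/h_1,\quad Y=h_2'/h_2,
\]
where $\tilde g_i$ is Einstein with Einstein constant $(r_i-1)k_i$. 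By \eqref{5.1.2} we have $(r_1-1)k_1=(r_2-1)k_2=0$, so each fiber $(N_i^{r_i},\tilde g_i)$ is Ricci-flat (and is simply an open interval when $r_i=1$). It then remains to pin down $X$, $Y$, $h_1$, $h_2$ and $f$.

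The first step is to show that $X/Y$ is constant. The identity \eqref{5.1.4} is a homogeneous quadratic $A\,X^2+B\,XY+C\,Y^2=0$ whose coefficients $A,B,C$ depend only on $n$, $m$ and $r_1$ (recall $r_2=n-1-r_1$); dividing by $Y^2\neq 0$ shows that $u:=X/Y$ is, at every point, a root of the fixed polynomial $Au^2+Bu+C$ (a linear polynomial when $r_1=1$ or $r_2=1$). By the real-analyticity of $(M^n,g,f)$ and connectedness of $U$, this forces $u\equiv b$ for a fixed constant $b$; moreover $b\neq 0$ (as $Y\neq 0$) and $b\neq 1$ (since $\lambda_a\neq\lambda_\alpha$ forces $X\neq Y$), and $b$ is determined by $n,m,r_1$ up to finitely many choices. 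Reality of such a root is itself a constraint on $(n,m,r_1)$; when no real root exists, Case~II is vacuous.

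Next I would integrate. Writing $Y=bX$ in \eqref{5.1.1} gives $\frac{1}{m}f'=\beta X$ with $\beta:=\frac{(r_1-1)+(r_2-1)b}{m+1}$, and a short computation rewrites \eqref{5.7} as $X'+cX^2=0$, where $c:=1+\beta+b=\frac{(m+r_1)+(m+r_2)b}{m+1}$; equivalently $(1/X)'=c$. If $c\neq 0$, then $X=(cs+d)^{-1}$ for a constant $d$; integrating $h_1'/h_1=X$, $h_2'/h_2=bX$ and $f'=m\beta X$, and absorbing multiplicative constants into $\tilde g_1,\tilde g_2$ (which preserves Ricci-flatness), gives exactly the normal form~(i), with $b_1=c$, $b_2=c/b$, $b_3=m\beta/c$, all functions of $n,m,r_1$ through $b,\beta,c$; here the admissible range is $1\le r_1\le n-2$. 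If instead $c=0$, i.e.\ $b=-\frac{m+r_1}{m+r_2}$, then $X\equiv c_3$ is a nonzero constant, $h_1=e^{c_3 s}$, $h_2=e^{bc_3 s}$ and $f=\frac{mc_3[(r_1-1)+(r_2-1)b]}{m+1}s$ modulo a constant, which is the normal form~(ii) with $b_4=b\notin\{0,1\}$. Finally, the case $c=0$ forces $r_1,r_2\ge 2$: if, say, $r_1=1$, then \eqref{5.1.4} fixes $b$ uniquely, and substituting it into $b=-\frac{m+r_1}{m+r_2}$ yields $m=-1$, which is excluded; hence $c=0$ can only occur when $2\le r_1\le n-3$ (in particular this case is vacuous for $n=4$).

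The main obstacle I expect is the bookkeeping in the last step: tracking the shift and rescaling freedoms so that the explicit $h_1,h_2,f$ match the stated normal forms; verifying that the value of $b$ coming from \eqref{5.1.4} is consistent with the remaining relations \eqref{5.1.3} and \eqref{5.1.5} and with the normalization of the Ricci eigenvalues, so that no additional rigidity is forced; and disposing of the degenerate subcases ($r_1=1$, $r_2=1$, or a vanishing coefficient among $A,B,C$). None of this is conceptually deep, but it demands the same careful case analysis already used for Lemmas~\ref{lemma5.1} and~\ref{lemma5.1.1}.
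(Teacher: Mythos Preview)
Your approach is correct and essentially identical to the paper's: both use \eqref{5.1.4} to force $X$ and $Y$ to be proportional (via real-analyticity), then feed this into \eqref{5.1.1} and \eqref{5.7} to obtain $X'+cX^2=0$, whose two solution types ($c\neq 0$ versus $c=0$) yield cases~(i) and~(ii). The paper works out $r_1=1$ explicitly before treating $r_1\ge 2$ in the same fashion, whereas you argue uniformly; you also supply the verification that $c=0$ forces $r_1,r_2\ge 2$, which the paper states without proof (note a small slip: after setting $u=X/Y\equiv b$ you should write $X=bY$, not $Y=bX$, though your subsequent formulas are internally consistent with the latter convention).
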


\begin{proof}
	Firstly, since either $r_1\geq 2$ or $r_2\geq 2$, without loss of generality, we may assume $r_2\geq2$. 
	Then, equation \eqref{5.1.4} can be considered as a quadratic function in $Y$. 
		By direct computations, we find its discriminant is given by 
	\[
	\frac{4(m+n-2)}{(m-1)^2}\left[m(n-1)-(m-1)r_1r_2 \right]. 
	\]
 Hence, 
 	\begin{equation*}\label{5.1.6}
 (m+n-2)\left[m(n-1)-(m-1)r_1r_2 \right]>0.
 \end{equation*}

	\smallskip

	If  $r_1=1$ then, from \eqref{5.1.4}, we get that
	\begin{equation*}
	(m+n-2)Y\left[ 2X-\frac{(m-1)(n-3)}{m+1}Y\right] =0. 
	\end{equation*}
In particular, since $m\neq 2-n$ and $Y\neq 0$, it follows that 
	\begin{equation}\label{5.1.7}
	 2X-\frac{(m-1)(n-3)}{m+1}Y =0,
	\end{equation}
	and then, by \eqref{5.1.1}, we have
	\begin{equation}\label{5.1.8}
	(n-3)Y-\frac{m+1}{m}f'=0.
	\end{equation}	
	It is easy to see that $(n-3)\neq \frac{2(m+1)}{m-1}$ from \eqref{5.1.7}, because $X\neq Y$.
	From \eqref {5.1.7} and \eqref{5.1.8}, it follows that 
        \begin{equation}\label{5.1.9}
                 f'=\frac{m(n-3)} {m+1}Y =\frac{2m}{m-1}X. 
        \end{equation}	
Combining \eqref{5.1.9} with \eqref{5.7},  we have 
	\[
	X'+\frac{(m+1)(n-1)}{(m-1)(n-3)}X^2=0. 
	\]
	Solving the above equation, we obtain 
	\[ X=\frac{1}{bs+c_1}, \qquad {\rm with}\qquad  b=\frac{(m+1)(n-1)}{(m-1)(n-3)},\]
for an arbitrary constant $c_1$. 
	
Moreover, by using the equations $f'=\frac{2m}{m-1}X$,  $\frac{h'_1}{h_1}=X$ and $\frac{h'_2}{h_2}=Y$,  we can easily solve for functions $f$, $h_1$ and $h_2$.  
 Therefor, over the dense open subset,  $(M^n,g)$ is locally isometric to a domain in 
 	\[
 	\left( \mathbb{R}^+\times L\times N^{n-2},
 	ds^2 + (bs+c_1)^{\frac{2}{b}} dt^2+  \left(\frac{n-1}{2}s+c_2\right)^{\frac{4}{n-1}} \tilde{g}\right) 
 	\]
 	for some constant $c_2$, where $\left( N^{n-2} ,\tilde{g}\right)$ is Ricci flat and 
 	$$f=\frac{2m(n-3)}{(m+1)(n-1)} \log (bs+c_1) $$
 	 modulo a constant.

	\smallskip
For $r_1\geq2$, we can apply the same argument as in the case $r=1$:  
by the real-analyticity of quasi-Einstein manifolds, equation \eqref{5.1.4} shows that a certain linear combination of $X$ and $Y$ is equal to zero. This allows us to handle the case in a similar manner as above. 

 Indeed, by combining \eqref{5.1.1} with \eqref{5.7} and using the fact that $X$ and $Y$ are linearly related, we have
 \[
 X'+b_1X^2=0. 
 \] 
for some constant $b_1$, which is determined by the coefficients in \eqref{5.1.4} and depends only on $n$, $m$ and $r_1$.		
If $b_1\neq0$, then we have 
\[
X=\frac{1}{b_1s+c_1} \qquad {\rm and } \qquad Y=\frac{1}{b_2s+c_2}
\]
for some constants $c_1$, $c_2$ and $b_2\neq 0$, 
where $b_1s+c_1>0$, $b_2s+c_2>0$, and $b_2$ is also determined by the coefficients in \eqref{5.1.4}, depending only on $n$, $m$ and $r_1$.

Therefore, $(U, g_{|U})$ is  isometric to a domain in
\[
\left( L\times N^{r_1}_1\times N^{r_2}_2,
g= ds^2 + (b_1s+c_1)^{\frac{2}{b_1}} \tilde{g}_{1}+(b_2s+c_2)^{\frac{2}{b_2}} \tilde{g}_{2}\right), 
\]
with $f=b_3\log (b_1s+c_1)$ modulo a constant, for some constant $b_3$, which is determined by the coefficients in \eqref{5.1.4}, depends only on $n$, $m$ and $r_1$. 
Combining the case $r_1=1$, we have type (i) in Theorem \ref{two2}.
 \smallskip
 
If $b_1=0$, then
$X=c_3$ for some constant $c_3\neq0$. Thus,
$Y=b_4c_3$, where constant $b_4\notin\{0,1\}$, which is also determined by the coefficients in \eqref{5.1.4}, depending only on $n$, $m$ and $r_1$.
Therefore, in this case, $(U, g_{|U})$ is isometric to a domain in
\[
\left(\mathbb R\times N^{r_1}_1\times N^{r_2}_2,
g= ds^2 + e^{2c_3s} \tilde{g}_{1}+e^{2b_4c_3s} \tilde{g}_{2}\right), 
\]
with $f=\frac{m[(r_1-1)c3+(r_2-1)c4]}{m+1}s$ modulo a constant. This is type (ii) in Theorem \ref{two2} and we have completed the proof of Theorem \ref{two2}.
\end{proof}

\section {The local structure theorem and the global classifcations}

In this section, we summarize our main results obtained in Section 3 and Section 4 to get the local structure theorem, Theorem \ref{local}, and then discuss the global classifcations, including Theorem \ref{complete'} as stated in the introduction.

\subsection{The local structure theorem} By combining Theorem \ref{thm4.1}, Theorem\ref{same}, Theorem \ref{two1} and Theorem \ref{two2}, we have the following local structure theorem. 

\begin{theorem} \label{local}
	Let $(M^n, g, f)$, $n\geq 4$, be an $n$-dimensional quasi-Einstein manifold satisfying \eqref{qe} with harmonic Weyl curvature and $m>1$ ($m\neq \infty$). Then,  each point $p\in M_A \cap \{ \nabla f \neq 0  \}$ admits a neighborhood $V_p$ such that $(V_p, g_{|V_p})$ is isometric to one of the following spaces: 

\smallskip
\begin{enumerate}
	\item[{\rm (a)}] either an Einstein manifold (with a constant potential function $f$) or a warped product with 
	\[
	g=ds^2+h^2(s)\bar{g} 
	\]
	for some positive function $h(s)$, where the Riemannian metric $\bar g$ is Einstein. In particular, $g$ is $D$-flat.

	\medskip
	\item[{\rm (b)}]	
	$\left( W^{\bar r},\bar{g}\right) \times\left(  N^{n-\bar r},\tilde{g}\right)$ for $2\leq\bar r\leq n-2$.
	Here, $\left(N^{n-\bar r},\tilde{g}\right) $ is an Einstein manifold with Einstein constant $\lambda$; $\left(W^{\bar r}, \bar{g}\right)=\left(L ,\, ds^2 \right) \times\, _{h_1}\left(N^{\bar r-1}_1, \tilde{g}_{1} \right) $ is a D-falt quasi-Einstein manifold satisfying \eqref{qe}, which is also 
	$\rho$-Einstein ($\rho\neq0$), where $L$ is an open interval, $\left(N^{\bar r-1}_1, \tilde{g}_{1} \right)$ is Einstein with Einstein constant $(\bar r -2)k_1$ for $3 \leq \bar r\leq (n-2)$ and $\lambda=\frac{m+\bar r-1}{\bar r-1}\rho$. 
	Moreover, with $\Lambda:=\frac{1}{\bar r-1}\rho$, one of the following holds
	\begin{enumerate}	
		\item[{\rm (i)}] 
	$\Lambda>0$, 
	$h_1=\sin({\sqrt{\Lambda}s})$,
	 $f=-m\ln(\cos\sqrt{\Lambda}s)$ (up to an additive constant) and $k_1=\Lambda$. 
	In particular, for $2\leq \bar r\leq 4$, $(V_p, g_{|V_p})$ is isometric to 
	a domain in $\mathbb{S}^{\bar r}_{+} \times N^{n-\bar r}$, 
	where $\mathbb{S}^{\bar r}_{+}$ is the northern hemisphere 
	in the $\bar r$-dimensional sphere $\mathbb{S}^{\bar r} $ with the metric
	$ds^2+\sqrt{\Lambda}\sin^2\Big(\sqrt{\Lambda}s\Big)g_{\mathbb{S}^{\bar r-1}}$, and
	$s\in(0,\frac{\pi}{2})$ is the distance function 
	on $\mathbb{S}^r_{+}$ from the north pole.
	
	\smallskip
	\item[{\rm (ii)}]  $\Lambda <0$, $h_1=\cosh({\sqrt{-\Lambda}s})$, $f=-m\ln|\sinh\sqrt{-\Lambda}s|$ and $k_1=\Lambda$.
	In particular, for $2\leq \bar r\leq 4$, $(V_p, g_{|V_p})$ is isometric to 
	a domain in $\mathbb{B}^{\bar r} \times N^{n-\bar r}$,
	where $\mathbb{B}^{\bar r}$ is the set $\{(s,t)|s<0\}$ 
	in the $\bar r$-dimensional hyperbolic space $\mathbb{H}^{\bar r}$
	with the metric
	$ds^2+\sqrt{-\Lambda}\cosh^2\left( \sqrt{-\Lambda}s\right) g_{\mathbb{H}^{\bar r-1}}$, 
	and $s\in(-\infty,0)$ 
	can be viewed as the signed distance function on $\mathbb{B}^{\bar r}$ from the line $\{(s,t)|s = 0\}$.
	
	\smallskip	
	\item[{\rm (iii)}] 	
	$\Lambda <0$, $h_1=e^{\sqrt{-\Lambda}s}$, $f=-m\sqrt{-\Lambda}s$ and
 $(N^{\bar r-1}, \tilde{g}_{1})$ is Ricci flat ($k_1=0$) for $3 \leq \bar r\leq (n-2)$.
	In particular, for $2\leq \bar r\leq 4$, $(V_p, g_{|V_p})$ is isometric to 
	a domain in $\mathbb{R}^{\bar r} \times N^{n-\bar r}$, 
		where $ds^2+e^{2\sqrt{-\Lambda}s}g_{\mathbb{R}^{\bar r-1}}$ is the metric on $\mathbb{R}^{\bar r}$ 
	and $s\in(-\infty,+\infty)$ can be viewed as the signed distance function on $\mathbb{R}^{\bar r}$ from any base point.

	\smallskip
	\item[{\rm (iv)}]  $\Lambda <0$, $h_1=\sinh({\sqrt{-\Lambda}s})$, $f=-m\ln(\cosh\sqrt{-\Lambda}s)$ and $k_1=-\Lambda$.
	In particular, for $2\leq \bar r\leq 4$, $(V_p, g_{|V_p})$ is isometric to 
	a domain in $\mathbb{H}^{\bar r} \times N^{n-\bar r}$, 
	where $\mathbb H^{\bar r} $ is the $\bar r$-dimensional hyperbolic space with the metric
	$ds^2+\sqrt{-\Lambda}\sinh^2\left( \sqrt{-\Lambda}s\right) g_{\mathbb{S}^{\bar r-1}}$, 
	and $s\in(-\infty,0)$ 
	can be viewed as the signed distance function on $\mathbb{H}^{\bar r}$ from any base point.
	\end{enumerate}
	
\medskip
	\item[{\rm (c)}] $\lambda=0$ and either 
	
	\begin{enumerate}
	\item [(i)]
	a domain in 
	\[
\quad \quad\quad\quad\quad\quad\quad	\left( L\times N^{r}_1\times N^{n-r-1}_2,
	g= ds^2 + (b_1s+c_1)^{\frac{2}{b_1}} \tilde{g}_{1}+(b_2s+c_2)^{\frac{2}{b_2}} \tilde{g}_{2}\right), 
	\]
	with $f=b_3\log (b_1s+c_1)$ modulo a constant, for some constant $c_1$.
	Here, $L$ and $N^1_1$ (when $r=1$) are open intervals; $(N^{r}_1, \tilde{g}_{1})$ (for $2 \leq r\leq n-2 $) and $(N_2, \tilde{g}_{2})$ are Ricci-flat; 
	and $b_i~(1\leq i\leq 3)$ are three constants, which are determined by the coefficients in the following equation \eqref{5.1.4'}, depending only on $n$, $m$ and $r$.	
	\begin{equation}\label{5.1.4'}
	\begin{aligned}
\quad\quad	\frac{(r-1)(m+r)}{m+1}x^2 & +2\left[	\frac{(r-1)(n-r-2)}{m+1}-\frac{m+n-2}{m-1} \right]xy\\
	&+\frac{(n-r-2)(m+n-r-1)}{m+1}y^2=0
	\end{aligned}
	\end{equation}	
	for $xy\neq 0$ and $x\neq y$; or

	\smallskip
	\item [(ii)] a domain in 
	\[
\quad\quad\quad\quad	\left(\mathbb R\times N^{r}_1\times N^{n-r-1}_2,
	g= ds^2 + e^{2c_3s} \tilde{g}_{1}+e^{2b_4c_3s} \tilde{g}_{2}\right)
	\]
	for $2 \leq r\leq n-3 $ and some constant $c_3\neq0$, with 
	$$\quad\quad\quad\quad\quad\quad f=\frac{mc_3[(r-1)+(n-r-2)b_4]}{m+1}s$$
	 modulo a constant.	
	Here, both $(N_1, \tilde{g}_{1})$ and $(N_2, \tilde{g}_{2})$ are Ricci-flat, 
	and constant $b_4\notin\{0,1\}$, which is determined by the coefficients in \eqref{5.1.4'}, depends only on $n$, $m$ and $r$.
		\end{enumerate}	
	\end{enumerate}		
\end{theorem}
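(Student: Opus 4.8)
The plan is to deduce Theorem~\ref{local} by assembling the four structure results of Sections~3 and~4, organizing the argument according to how many distinct values the Ricci eigenvalues $\lambda_2,\dots,\lambda_n$ assume near a given point. First I would fix $p\in M_A\cap\{\nabla f\neq0\}$ and record that, since $n\geq4$ and $m>1$, one has $m\notin\{\pm1,\,2-n,\,\infty\}$, so that Theorems~\ref{same}, \ref{two1} and~\ref{two2} all apply. Applying Theorem~\ref{thm4.1} (here $m>1$ is used), I obtain a neighborhood $U\subset M_A\cap\{\nabla f\neq0\}$ of $p$ on which $\lambda_2,\dots,\lambda_n$ take at most two distinct values; after shrinking $U$ I may assume this number is constant on $U$, hence equal to $1$ or $2$. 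This $U$ will serve as the claimed neighborhood $V_p$.

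If the number equals $1$, that is $\lambda_2=\cdots=\lambda_n$ on $U$, then Theorem~\ref{same} applies directly and produces exactly alternative~(a), including the vanishing of the $D$-tensor, and there is nothing further to do.

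If it equals $2$, write $\lambda_a\neq\lambda_\alpha$ for the two values, with multiplicities $r_1$ and $r_2=n-r_1-1$, and put $X=\xi_a$, $Y=\xi_\alpha$ as in~\eqref{X&Y}. The pivotal input is Lemma~\ref{lemma5.2}, which gives on $U$
\[
XY\Big[(r_1-1)X+(r_2-1)Y-\tfrac{m+1}{m}f'\Big]=0 .
\]
Since $X$, $Y$ and $f'$ are real-analytic functions of $s$ alone (by Theorem~\ref{mulwar} and the real analyticity of quasi-Einstein manifolds), after a further shrinking of $U$ one of the three factors vanishes identically; moreover $X$ and $Y$ cannot both be identically zero, since that would force $\lambda_a=\lambda_\alpha=\lambda$. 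In the subcase where one of $X,Y$ vanishes identically---relabelling $a\leftrightarrow\alpha$ if needed, say $Y\equiv0$---I invoke Theorem~\ref{two1} with $r:=r_1$; its internal Claim (which uses $m\neq2-n$) gives $r\leq n-3$, and with $\bar r:=r+1$ the four cases~(i)--(iv) there translate to~(b)(i)--(iv) here. The bookkeeping to carry out is the matching, under $\bar r=r+1$, of the fiber Einstein constants, of the relation $\lambda=\frac{m+\bar r-1}{\bar r-1}\rho=\frac{m+r}{r}\rho$, and of the dimension ranges. In the remaining subcase---neither $X$ nor $Y$ identically zero, so the bracket vanishes identically---Lemma~\ref{lemma5.1.1} forces $\lambda=0$ together with the quadratic relation~\eqref{5.1.4}, and Theorem~\ref{two2} then yields alternative~(c); substituting $r_2=n-r_1-1$ and writing $r=r_1$ turns~\eqref{5.1.4} into~\eqref{5.1.4'}, so the constants $b_1,b_2,b_3$ of~(c)(i) and $b_4$ of~(c)(ii) depend only on $n$, $m$ and $r$.

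Since the alternatives of one and of two distinct values, and within the latter the three mutually exclusive possibilities $X\equiv0$, $Y\equiv0$, and $(r_1-1)X+(r_2-1)Y-\tfrac{m+1}{m}f'\equiv0$, exhaust all configurations up to the symmetry $X\leftrightarrow Y$ (equivalently $r_1\leftrightarrow r_2$), every $p\in M_A\cap\{\nabla f\neq0\}$ falls into~(a), (b), or~(c), which completes the proof. I expect the delicate point not to be any computation---all the substantive analysis (the algebraic identities in Lemmas~\ref{lemma5.1} and~\ref{lemma5.2}, and the ODE integrations in Theorems~\ref{two1} and~\ref{two2}) is already in place---but rather the verification that the trichotomy of Lemma~\ref{lemma5.2} genuinely splits into three disjoint \emph{analytic} alternatives on a sufficiently small neighborhood, and that the $X\leftrightarrow Y$ relabelling is carried out so that no configuration is missed or double-counted.
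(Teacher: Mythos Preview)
Your proposal is correct and follows essentially the same approach as the paper: the paper's proof of Theorem~\ref{local} consists of a single sentence stating that the result follows by combining Theorems~\ref{thm4.1}, \ref{same}, \ref{two1} and~\ref{two2}, and you have simply spelled out how that combination works---splitting on the number of distinct values among $\lambda_2,\dots,\lambda_n$, and within the two-value case using Lemma~\ref{lemma5.2} (together with real analyticity) to separate Case~I ($XY=0$) from Case~II ($XY\neq0$), exactly as the paper organizes Sections~4.1 and~4.2. Your added care about shrinking $U$ so that a single factor vanishes identically, and about the $X\leftrightarrow Y$ relabelling, is more explicit than the paper but entirely in the same spirit.
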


\begin{remark}
	For $n=3$, He, Petersen and Wylie \cite[Example 3.5]{HPW} showed that there are local solutions to the quasi-Einstein equation \eqref{qe} which are not locally conformally flat and are doubly warped product metrics of the form
	\[
	g = \mathrm{d}s^{2}+\phi(s)^{2}\mathrm{d}\theta_{1}^{2}+\psi(s)^{2}\mathrm{d}\theta_{2}^{2}.
	\]
	Meanwhile, it is worth noting that quasi-Einstein manifolds of types (i)-(iv) for (b) and types (i)-(ii) for (c) in Theorem \ref{local}  provide new examples of neither locally conformally flat nor $D$-flat ones. 
\end{remark}

\subsection{The global classifications} First, we show that case (c) in Theorem \ref{local} does not occur when the quasi-Einstein manifold $(M^n, g, f)$ is complete (with or without boundary). 

\begin{proposition}\label{two2.1}
	Let $(M^n, g, f)$, $n\geq 4$, be an $n$-dimensional quasi-Einstein manifold satisfying \eqref{qe} with harmonic Weyl curvature and $m>1$ ($m\neq\infty$). If $(M^n, g)$ is locally isometric to a space of either type \!(c)(i) or type \!(c)(ii) in Theorem \ref{local}, then it cannot be complete (with or without boundary). 
\end{proposition}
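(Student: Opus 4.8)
\medskip
\noindent\textbf{Proof sketch (proposal).}
The plan is to handle the two sub-cases separately, using throughout that on the open dense set $\mathcal U:=M_A\cap\{\nabla f\neq 0\}$ the metric $g$ has the explicit doubly warped form of Theorem~\ref{local}(c), that $E_1=\nabla f/|\nabla f|=\nabla s$ is a unit geodesic vector field (so $s$ is arc length along its integral curves), that $\lambda=0$, and that by \eqref{5.1.5} one has $R=-\tfrac{m-1}{m}(f')^2$. In both cases I would follow an integral curve of $E_1$ toward the boundary of the interval of $s$-values on which the warped structure lives and extract a contradiction with completeness.

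For type (c)(ii), $f$ is a non-constant linear function of $s$, so $f''=0$ and the two expressions for $\lambda_1$ in \eqref{3.13} give $\tfrac1m(f')^2=-\sum_{i\ge 2}\xi_i^2$; since $f'\neq 0$ while (in this case) every $\xi_i$ is a nonzero constant, the left side is positive and the right side is negative, a contradiction. Equivalently, $|\nabla f|^2=(f')^2$ is a positive constant, so tracing \eqref{qe} gives $\Delta f=\tfrac1m|\nabla f|^2-R=|\nabla f|^2$ (also constant), whence $Ric(\nabla f,\nabla f)=\tfrac1m|\nabla f|^4>0$ from \eqref{qe} on the pair $(\nabla f,\nabla f)$ (note $\nabla^2 f(\nabla f,\nabla f)=0$ since $|\nabla f|^2$ is constant), while Bochner's formula applied to $|\nabla f|^2$ forces $0=|\nabla^2 f|^2+Ric(\nabla f,\nabla f)>0$. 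Either way type (c)(ii) does not occur at all, and in particular carries no complete metric.

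For type (c)(i) the warpings $h_1=(b_1s+c_1)^{1/b_1}$, $h_2=(b_2s+c_2)^{1/b_2}$ with $b_1\neq 0$, and $f=b_3\log(b_1s+c_1)$ with $b_3\neq 0$, force $b_1s+c_1>0$ and $b_2s+c_2>0$ on $\mathcal U$; hence the arc-length coordinate $s$ ranges over a maximal open interval $I$ contained in the half-line $\{b_1s+c_1>0\}$, which therefore has a finite endpoint $s_0$. If $s_0=-c_1/b_1$, then $|f'|=|b_1b_3|/|b_1s+c_1|\to\infty$ and hence $R=-\tfrac{m-1}{m}(f')^2\to-\infty$ as $s\to s_0$; if instead $s_0=-c_2/b_2\neq-c_1/b_1$, then $f'(s_0)$ is finite and nonzero while $\xi_\alpha=1/(b_2s+c_2)\to\infty$, so the Ricci eigenvalue $\lambda_\alpha=-f'\xi_\alpha$ is unbounded near $s_0$. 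In either case some curvature quantity blows up along an integral curve $\gamma$ of $E_1$ as its arc-length parameter tends to the finite value corresponding to $s_0$; were $M$ complete and boundaryless, $\gamma$ would extend to that parameter value, yielding a limit point in $M$ at which the curvature is finite by continuity --- a contradiction. For $M$ with boundary I would invoke the boundary conditions of Section~5 (which make the relevant warping function $w=e^{-f/m}$, equivalently $|\nabla f|$, or the second fundamental form of $\partial M$, degenerate on $\partial M$) to rule out a boundary that screens off the singular end, and conclude likewise.

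The step I expect to be the main obstacle is, in type (c)(i), verifying that the integral curve of $E_1$ actually reaches the finite endpoint $s_0$ of $I$ rather than exiting $\mathcal U$ at some interior $s$-value: this requires using the real analyticity of quasi-Einstein manifolds to ensure that $\mathcal U$ is connected and that the pattern of Ricci eigenvalue multiplicities --- equivalently, the validity of the warped form with these particular $h_i$ --- persists on all of $I$, so that $\overline{\mathcal U}$ genuinely meets $\{s=s_0\}$. Once that is in place, the finite-distance curvature blow-up finishes the argument; a minor additional point is extracting from the precise boundary framework of Section~5 the statement that no admissible boundary can avoid the singular end.
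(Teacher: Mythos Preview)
Your approach is quite different from the paper's. The paper argues uniformly for both sub-cases: from $\lambda=0$ and $R=-\tfrac{m-1}{m}(f')^2$ (equation \eqref{5.1.5}) it computes the Kim--Kim constant $\mu=w\Delta w+(m-1)|\nabla w|^2+\lambda w^2$ (with $w=e^{-f/m}$) and finds $\mu=0$; it then invokes the rigidity theorems of Case \cite{case} and He--Petersen--Wylie \cite{HPW}, which assert that a complete $(\lambda,n+m)$-Einstein manifold with $m>1$, $\lambda\ge 0$ and $\mu\le 0$ is trivial. No geodesic analysis, no curvature blow-up, and the boundary case is absorbed into the cited results.

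Your handling of type~(c)(ii) is correct and in fact stronger than what is needed: the identity $\tfrac1m(f')^2=-\sum_{i\ge 2}\xi_i^2$ from \eqref{3.13} is a \emph{local} contradiction for $m>0$, so (c)(ii) is vacuous under the hypothesis $m>1$. (Your alternative Bochner argument is likewise a purely local contradiction.)

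Your handling of type~(c)(i), however, has a genuine gap exactly where you anticipate it. The warped form is only guaranteed on neighborhoods contained in $\mathcal U=M_A\cap\{\nabla f\neq 0\}$, and you must show that an integral curve of $E_1$ remains in a region carrying the \emph{same} structural constants $b_i,c_i$ all the way to the finite parameter $s_0$. Real analyticity of $g$ does not immediately yield this: the complement $M\setminus M_A$ is merely nowhere dense, so it may well meet the curve, and across such points the eigenvalue pattern (hence the explicit formulas for $f'$, $\xi_\alpha$, $R$) need not persist. Your treatment of the boundary case is also incomplete --- the paper imposes no boundary conditions of the sort you allude to; the $\mu=0$ route succeeds precisely because the cited triviality results already cover manifolds with boundary. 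A way to repair your argument is to observe that $R+\tfrac{m-1}{m}|\nabla f|^2$ is continuous on $M$ and vanishes on the dense set $\mathcal U$, hence vanishes globally; but that is exactly the statement $\mu=0$, and from there you still need a global obstruction --- which brings you back to the paper's proof.
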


\begin{proof}
	First of all, by setting $w=e^{-\frac{f}{m}}$, we claim that the constant 
	\[
	\mu=w\Delta w+(m-1)|\nabla w|^2+\lambda w^2
	\]
(as in Kim-Kim \cite[Proposition 5]{KK}) must vanish.
In fact, 
\begin{equation*}
\begin{aligned}
\mu=&w\Delta w+(m-1)|\nabla w|^2+\lambda w^2\\
=&\frac{1}{m}e^{-\frac{2f}{m}}\left[-\Delta f+|\nabla f|^2 +m\lambda \right]\\
=&\frac{1}{m}e^{-\frac{2f}{m}}\left( R+\frac{m-1}{m}f'^2 \right),
\end{aligned}
\end{equation*}		
where we have used $R+ \Delta f-\frac 1 m |\nabla f|^2 =n\lambda$, and the fact that $\lambda=0$ for type \!(c), in the last equality. 

Hence, from \eqref{5.1.5}, it immediately follows that $\mu=0$.
However, by Case \cite[Theorem 1.1]{case} and He-Petersen-Wylie \cite[Corollary 4.2]{HPW}, 
any complete $(\lambda,n+m)$-Einstein manifold with $m>1$, $\lambda\geq 0$, and $\mu\leq 0$ is necessarily trivial.
Thus, the quasi-Einstein manifold $(M^n, g, f)$ in the proposition cannot be complete.
\end{proof}

Meanwhile,  Case-Shu-Wei \cite[Proposition 4.2]{CSW} provided the following classification of complete $(\lambda, n+m)$-Einstein manifolds which are also Einstein with some Einstein constant $\rho \neq \lambda $  (called $\rho$-Einstein in \cite{HPW}). 

\begin{proposition}{\rm (\cite[Proposition 4.2]{CSW})}\label{propkappaEinstein'} 
A complete, non-trivial, $(\lambda, m+n)$-Einstein manifold $(W^n, \bar g, w=e^{-f/m})$, with $n \geq 2$ and $m>0$ ($m\neq\infty$), is $\rho$-Einstein for some $\rho\neq \lambda$ if and only if $W^n$ is diffeomorphic to ${\mathbb R}^n$ and given by either  
\begin{enumerate}

\smallskip
\item[{\rm (A)}] 	
$\lambda <0$ and
$(W^n, \bar g)$ is isometric to
$$\left({\mathbb R}\times F^{n-1}, \ ds^{2}+ e^{2\sqrt{-\bar{k}}s}g_{F}\right),$$ with $w(s) =e^{\sqrt{-\bar{k}}s}$, or 

\smallskip
\item[{\rm (B)}] 	
$\lambda<0$ and $(W^n, \bar g)$ is isometric to the hyperbolic space
$$\quad\quad \left(\mathbb{H}^n, \ ds^{2}+ \sqrt{-\bar{k}} \sinh^2(\sqrt{-\bar{k}} s) g_{\mathbb{S}^{n-1}}\right),$$ 
with $w(s) =  \cosh(\sqrt{-\bar{k}}s)$.
\end{enumerate}
 Here, $\bar{\kappa} = \frac{\lambda - \rho}{m} \neq 0$, $(F, g_F)$ is Ricci-flat, and $\mathbb{S}^{n-1}$ is the round unit sphere.
\end{proposition}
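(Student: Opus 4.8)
The plan is to reduce Proposition~\ref{propkappaEinstein'} to the classical rigidity for functions whose Hessian is proportional to the metric. Writing $w=e^{-f/m}>0$, the quasi-Einstein equation \eqref{qe} together with $\mathrm{Ric}=\rho\,\bar g$ reads
\begin{equation*}
\nabla^2 w=-\bar\kappa\,w\,\bar g,\qquad \bar\kappa:=\tfrac{\lambda-\rho}{m}\neq 0 ,
\end{equation*}
so that $\Delta w=-n\bar\kappa w$ and, differentiating $|\nabla w|^2$ with the Hessian identity, $|\nabla w|^2+\bar\kappa w^2\equiv c$ for some constant $c$. First I would pin down the sign of $\bar\kappa$. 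A Bochner computation for $|\nabla w|^2$ (equivalently, constancy of the Kim--Kim quantity $\mu=w\Delta w+(m-1)|\nabla w|^2+\lambda w^2$ of \cite[Proposition~5]{KK}) forces $\rho=\tfrac{n-1}{m+n-1}\lambda$, hence $\bar\kappa=\tfrac{\lambda}{m+n-1}$ has the same sign as $\lambda$; in particular $\lambda\neq 0$. If $\bar\kappa>0$ then $\lambda>0$, so $W^n$ is compact by \cite[Theorem~5]{Qian}, and integrating $\Delta w=-n\bar\kappa w$ over $W^n$ gives $\int_{W^n}w=0$, contradicting $w>0$. Hence $\bar\kappa<0$; write $a:=\sqrt{-\bar\kappa}>0$, so $\lambda<0$ and $|\nabla w|^2=c+a^2w^2$.

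Next I would integrate $w''=a^2w$ along the normalized gradient flow of $w$: along each flow line $w=Ae^{as}+Be^{-as}$ with $c=-4a^2AB$, and positivity of $w$ on all of $\mathbb{R}$ forces $A,B\ge 0$, i.e.\ $c\le 0$. If $c=0$, then $|\nabla w|=aw$ never vanishes, $w=e^{as}$ after scaling (and possibly reversing $s$), the flow lines are unit-speed geodesics, and $(W^n,\bar g)$ splits globally as $ds^2+e^{2as}g_F$ over a complete $(F^{n-1},g_F)$; computing the Ricci tensor of this warped product and imposing the Einstein condition forces $(F^{n-1},g_F)$ to be Ricci-flat, and completeness gives $W^n\cong\mathbb{R}^n$ --- this is case~(A). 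If $c<0$, then $A,B>0$ and, after translating and scaling $s$, $w=\cosh(as)$; now $|\nabla w|$ vanishes only where $w=1$, and there $\nabla^2 w=a^2\bar g$ is positive definite, so $w$ has a strict, hence isolated, minimum and $w\ge 1$ everywhere. By the Obata--Tashiro classification of manifolds carrying a non-constant concircular function (the compact/sphere alternative being excluded by $w>0$ as above), $(W^n,\bar g)$ is rotationally symmetric about this unique critical point, $dr^2+h(r)^2 g_{\mathbb{S}^{n-1}}$ with $r$ the distance to it; matching $w(r)=\cosh(ar)$ and smoothness at $r=0$ forces $h$ to be the hyperbolic warping function, so $W^n\cong\mathbb{R}^n$ is the hyperbolic space of curvature $\bar\kappa$ with $w=\cosh(\sqrt{-\bar\kappa}\,r)$ --- this is case~(B). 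The converse direction is a direct check: for the metrics in (A) and (B) with the stated $w$, one computes $\nabla^2 w$ and $\mathrm{Ric}$ and verifies both the $(\lambda,m+n)$-Einstein and $\rho$-Einstein conditions with $\bar\kappa=(\lambda-\rho)/m$.

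I expect the main obstacle to be the case $c<0$: extracting the global rotationally symmetric structure from a concircular function with a single critical point, and closing it up smoothly at the pole --- this is exactly the point where completeness of $(W^n,\bar g)$ enters essentially. By contrast, the $c=0$ case and the warped-product Ricci computations are routine, and excluding $\lambda\ge 0$ is immediate from $w>0$ once the relation $\rho=\tfrac{n-1}{m+n-1}\lambda$ is available.
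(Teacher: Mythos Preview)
The paper does not give its own proof of Proposition~\ref{propkappaEinstein'}; it is quoted verbatim from \cite[Proposition~4.2]{CSW} and used as a black box in Step~3 of the sketch of Theorem~\ref{complete'}. There is therefore no in-paper argument to compare against. Your outline is essentially the standard route to this result: the Einstein hypothesis turns \eqref{qe} into the concircular equation $\nabla^{2}w=-\bar\kappa\,w\,\bar g$; constancy of the Kim--Kim quantity $\mu$ forces $\bar\kappa=\lambda/(m+n-1)$, hence $\rho=\frac{n-1}{m+n-1}\lambda$; $\lambda>0$ is excluded via compactness \cite{Qian} and $\int_{W}\Delta w=0$; and the remaining dichotomy $c=0$ versus $c<0$ in $|\nabla w|^{2}=c+a^{2}w^{2}$ is resolved by the Tashiro classification of complete manifolds carrying a nontrivial concircular scalar field. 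This is correct.

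One small point: your sentence ``completeness gives $W^{n}\cong\mathbb{R}^{n}$'' in case~(A) is not supported by anything you wrote, and in fact it need not hold --- a complete Ricci-flat $(F^{n-1},g_{F})$ is not forced to be $\mathbb{R}^{n-1}$ (a flat torus, or in higher dimensions a K3 surface, already shows this), so $\mathbb{R}\times F^{n-1}$ need not be diffeomorphic to $\mathbb{R}^{n}$. This is really a wrinkle in the proposition as stated here rather than in your argument: the assertion that $W^{n}\cong\mathbb{R}^{n}$ is automatic in case~(B) but requires an additional hypothesis on $F$ in case~(A). Everything else in your plan is sound, and the step you flag as the main obstacle (extracting the rotationally symmetric model in the $c<0$ case) is exactly where one invokes Tashiro's theorem together with the smoothness condition $h'(0)=1$ at the pole.
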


Subsequently, He-Petersen-Wylie \cite[Proposition 3.1]{HPW} extended the work of Case-Shu-Wei \cite{CSW} to the case of $(W^n, \bar g, w=e^{-f/m})$ being complete, with or without boundary, as follows:

\begin{proposition}{\rm (\cite[Proposition 3.1]{HPW})}\label{propkappaEinstein} 
Let $(W^n, \bar g, w=e^{-f/m})$, $n \geq 2$, be  a complete (with or without boundary) non-trivial $(\lambda, m+n)$-Einstein manifold which is also $\rho$-Einstein, for some $\rho\neq \lambda$,  and $m>0$ ($m\neq\infty$). Then, it is one of the following cases:
\begin{enumerate}
\item[{\rm (i)}]  
$\lambda >0$, 
$(W^n, \bar g)$ is isometric to a quotient of the upper hemisphere $$\left({\mathbb S}^{n}_{+},  \ ds^{2}+ \sqrt{\bar{k}} \sin ^{2}(\sqrt{\bar{k}} s) g_{\mathbb{S}^{n-1}}\right)
 \quad {\rm and}\quad  w(s) = \cos(\sqrt{\bar{k}}s);$$

\smallskip	
\item[{\rm (ii)}]  
$\lambda =0$, $(W^n, \bar g)$ is isometric to 
$$\left([0, \infty) \times F^{n-1}, \ ds^{2}+ g_{F}\right)
 \quad {\rm and}  \quad w(s)=s;$$

\smallskip
\item[{\rm (iii)}] 	
$\lambda<0$ and $(W^n, \bar g)$ is isometric to 
$$\left(  [0, \infty) \times N^{n-1}, \ ds^{2}+ \sqrt{-\bar{k}} \cosh ^{2}(\sqrt{-\bar{k}} s) g_{N}\right)
 \quad {\rm and} \quad w(s) =  \sinh(\sqrt{-\bar{k}}s); $$

\smallskip
\item[{\rm (iv)}] 	
$\lambda <0$ and
$(W^n, \bar g)$ is isometric to
$$\left({\mathbb R} \times F^{n-1}, \ ds^{2}+ e^{2\sqrt{-\bar{k}}s}g_{F}\right)
\quad {\rm and} 
\quad  w(s) =e^{\sqrt{-\bar{k}}s};$$

\smallskip
\item[{\rm (v)}] 	
$\lambda<0$ and $(W^n, \bar g)$ is isometric to 
$$\quad\quad \quad \left(\mathbb{H}^n, \ ds^{2}+ \sqrt{-\bar{k}} \sinh^2(\sqrt{-\bar{k}} s) g_{\mathbb{S}^{n-1}}\right)
 \quad {\rm and } \quad w(s) =  \cosh(\sqrt{-\bar{k}}s).$$
\end{enumerate}
 Here, $\bar{\kappa} = \frac{\lambda - \rho}{m} \neq 0$, $\mathbb{S}^{n-1}$ is the round unit sphere, $(F, g_F)$ is Ricci flat, and $(N, g_N)$ is Einstein  with negative Ricci curvature.
\end{proposition}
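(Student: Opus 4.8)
The plan is to collapse the two defining equations into a single Obata--Tashiro type Hessian equation for $w$, establish the sign relation between $\lambda$, $\rho$ and $\bar\kappa$, and then read the five models off the resulting profile ODE together with the completeness hypothesis.

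\emph{Reduction and the sign relation.} Substituting the $\rho$-Einstein condition $Ric=\rho\bar g$ into $\nabla^2 w=\tfrac{w}{m}(Ric-\lambda\bar g)$ yields
\[
\nabla^2 w=-\bar\kappa\,w\,\bar g,\qquad \bar\kappa:=\frac{\lambda-\rho}{m},
\]
where $w=e^{-f/m}>0$ in the interior of $W$ and $w$ is non-constant by non-triviality. Differentiating $|\nabla w|^2+\bar\kappa w^2$ and using the Hessian equation shows it is a constant $c$, so $|\nabla w|^2=c-\bar\kappa w^2$; combining this with $\Delta w=-n\bar\kappa w$ and a Bochner computation for $|\nabla w|^2$ (into which $Ric=\rho\bar g$ enters) forces $\rho=(n-1)\bar\kappa$, hence $\lambda=\rho+m\bar\kappa=(m+n-1)\bar\kappa$ and $\mathrm{sign}\,\lambda=\mathrm{sign}\,\bar\kappa$. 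If $\rho=\lambda$, then $\bar\kappa=0$ and $\nabla^2 w=0$, so $\nabla w$ is parallel and the de Rham decomposition gives case (ii) at once; so from now on we may assume $\bar\kappa\neq0$.

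\emph{Structure of $W$.} Since $\nabla_{\nabla w}\nabla w=-\bar\kappa w\,\nabla w$ is proportional to $\nabla w$, the integral curves of $E_1=\nabla w/|\nabla w|$ are unit-speed geodesics $\gamma(s)$, along which $a(s):=w(\gamma(s))$ satisfies the linear ODE $a''+\bar\kappa a=0$. Off the zero set of $\nabla w$ the level hypersurfaces of $w$ are totally umbilic and $\bar g=ds^2+\phi(s)^2\tilde g$ with $\phi$ determined by $a$ through the Hessian equation; substituting this form into $Ric=\rho\bar g$ forces $\tilde g$ to be Einstein. At an interior critical point $p$ of $w$ the Hessian $\nabla^2 w(p)=-\bar\kappa w(p)\bar g$ is non-degenerate (because $w(p)>0$ and $\bar\kappa\neq0$), so by the Tashiro--Kanai structure theorem for complete manifolds — with or without boundary — carrying a non-constant function with Hessian a multiple of the metric, $W$ is rotationally symmetric about $p$ with round-sphere level sets; if instead $w$ has no interior critical point, then $W=I\times_{\phi}N$ over an interval $I$ with $N$ Einstein, and $\partial W\subset\{w=0\}$ at any finite endpoint of $I$.

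\emph{Integrating the profile and listing the cases.} It remains to solve $a''+\bar\kappa a=0$, normalize the constants of integration using $w>0$ and $a'^2+\bar\kappa a^2=c$, recover $\phi$, and match with the two structural alternatives. If $\bar\kappa>0$ (so $\lambda>0$), then $a=a_0\cos(\sqrt{\bar\kappa}\,(s-s_0))$ is positive only between consecutive zeros, which rules out the boundaryless alternative and places $W$ in the rotationally symmetric case with $w=\cos(\sqrt{\bar\kappa}\,s)$ and $\partial W=\{w=0\}$, i.e.\ a quotient of the hemisphere $\mathbb{S}^n_+$ (case (i)). If $\bar\kappa<0$ (so $\lambda<0$), then $a$ is convex and is either monotone, giving $a=\sinh(\sqrt{-\bar\kappa}\,s)$ on $[0,\infty)$ with $\partial W=\{w=0\}$ and negatively-Einstein level sets (case (iii)); or $a=e^{\sqrt{-\bar\kappa}\,s}$ with Ricci-flat level sets and no boundary (case (iv)); or $a$ attains an interior minimum, forcing the rotationally symmetric alternative with $w=\cosh(\sqrt{-\bar\kappa}\,s)$ and $W=\mathbb{H}^n$ (case (v)). The crux is exactly this last matching step — deciding, for each sign of $\bar\kappa$, which structural alternative actually occurs, and showing that whenever $W$ has a boundary it must be the zero locus $\{w=0\}$; this is where the Tashiro--Kanai trichotomy, the conservation law $|\nabla w|^2+\bar\kappa w^2=c$, and the strict positivity of $w$ in the interior do the real work, and where the quotients in cases (i) and (v) enter.
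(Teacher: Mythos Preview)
The paper does not prove this proposition; it is quoted from He--Petersen--Wylie \cite[Proposition~3.1]{HPW} as a known classification result, with no argument supplied in the present paper. There is therefore no ``paper's own proof'' to compare your attempt against.

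That said, your outline is the standard route and is essentially how the result is obtained in \cite{HPW} (building on \cite{CSW}): collapse the two equations to the concircular Hessian equation $\nabla^{2}w=-\bar\kappa\,w\,\bar g$, use Bochner to force $\rho=(n-1)\bar\kappa$ and hence $\operatorname{sign}\lambda=\operatorname{sign}\bar\kappa$, and then invoke the Tashiro--Kanai classification of complete manifolds (with or without boundary) admitting a non-constant function whose Hessian is a functional multiple of the metric, reading off the warping $\phi$ and the profile $a$ from $a''+\bar\kappa a=0$. The case split according to the sign of $\bar\kappa$ and the presence or absence of an interior critical point is correct.

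One small wrinkle worth flagging: you recover case~(ii) by taking $\bar\kappa=0$ (equivalently $\rho=\lambda$), which the stated hypothesis explicitly rules out, and the paper's statement even asserts $\bar\kappa\neq0$ at the end. In fact the model in~(ii) has $Ric=0$ and $\lambda=0$, so $\rho=\lambda=0$; this is a minor inconsistency in the transcription of the statement from \cite{HPW} rather than a flaw in your reasoning.
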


Now, we are ready to sketch a proof of Theorem \ref{complete'}: 

\begin{enumerate}

\medskip
\item[$\bullet$] {\bf Step 1:} By Proposition \ref{two2.1}, we know that locally, over the open dense subset $M_A \cap \{ \nabla f \neq 0  \}\subset M$, $(M^n, g, f)$ is of types (a) and (b) in Theorem \ref{local}. 

\smallskip
\item[$\bullet$] {\bf Step 2:} Suppose $(M^n, g, f)$ is non-trivial and of type (a), then it is $D$-flat. Therefore, by \cite[Theorem 1.2]{HPW} (or by using a similar argument as in \cite{FLGR, CY, Kim2}),  it is globally a warped product as in  part (1) of Theorem \ref{complete'}. 

\smallskip
\item[$\bullet$] {\bf Step 3:} Suppose $(M^n, g, f)$ is non-trivial and of type (b) in Theorem \ref{local}. We consider its universal cover $(\tilde{M}^n, {\tilde g}, {\tilde f})$. Then, by the completeness and simply-connectedness of $(\tilde{M}^n, {\tilde g}, {\tilde f})$, and the analyticity of ${\tilde g}$, one can show that  $(\tilde{M}^n, {\tilde g})$ is globally a Riemannian product of Einstein manifolds  $(M_1^k, g_1)$ and $(M_2^{n-k}, g_2)$, $2\leq k \leq n-2$, with Einstein constants $\rho$ and $\lambda\neq 0$, respectively ($\bar{\kappa} = \frac{\lambda - \rho}{m}=\Lambda$). 
Furthermore, ${\tilde f}$ is a function on $M_1^k$ only and  $(M_1^k, g_1, {\tilde f})$ is a quai-Einstein manifold satisfying \eqref{qe}. Therefore, by Proposition \ref{propkappaEinstein'}, $(M_1^k, g_1, {\tilde f})$ is of type (A) or type (B), thus  proving parts (2) and (3) of Theorem \ref{complete'}. 
\end{enumerate}

\begin{remark} 
If we assume $(M^n, g, f)$ in Theorem \ref{complete'}  to be complete, possibly with boundary, then, in view of  Proposition \ref{propkappaEinstein}, we would have the following corresponding result: 
\end{remark}

\begin{theorem}\label{complete}
	Let $(M^n, g, f)$, $n\geq 5$, be an $n$-dimensional complete, with or without boundary, non-trivial quasi-Einstein manifold satisfying \eqref{qe} with harmonic Weyl curvature and $m>1$ and $m\neq \infty$. Then, it is one of the following types:
	\begin{enumerate}
	\item[{\rm (a)}] $(M^n, g, f)$ is a quotient of some warped product quasi-Einstein manifold of the form
	\[
	\left(\mathbb{R} ,\, ds^2 \right) \times\, _h\left(N^{n-1}, \bar{g} \right), 
	\]
	where $\left(N^{n-1}, \bar{g} \right) $ is an Einstein manifold. [This is the $D$-flat case.]  
	
\smallskip
\item[{\rm (b)}] $\lambda\neq 0$ and $(M^n,g)$ is isometric to a quotient of the Riemannian product 
\[\hskip 1in \left( W^{\bar r},\bar{g}\right) \times\left(  N^{n-{\bar r}},\tilde{g}\right), \qquad 2\leq {\bar r}\leq n-2.\]
Here, $\left(N^{n-{\bar r}},\tilde{g}\right) $ is an Einstein manifold with Einstein constant $\lambda$;  $\left(W^{\bar r}, \bar{g}\right)$ is a (D-falt) quasi-Einstein manifold satisfying \eqref{qe} which is also an Einstein manifold with Einstein constant $\rho=\frac{{\bar r}-1}{m+{\bar r}-1}\lambda$, given by one of the following cases with $\Lambda =\frac{1}{m+{\bar r}-1}\lambda$:
\begin{enumerate}	
	\item[{\rm (i)}]  
	$\Lambda >0$ and $\left(W^{\bar r}, \bar{g}\right) $ is isometric to 
	 $$\left(\mathbb{S}^{\bar r}_{+}, \ ds^{2}+ \sqrt{\Lambda} \sin ^{2}(\sqrt{\Lambda} s) g_{\mathbb{S}^{{\bar r}-1}}\right),$$ where $\mathbb{S}^{\bar r}_{+}$ is an upper hemisphere, and (up to an additive constant) $f= -m\log(\cos\sqrt{\Lambda}s)$.
	
	\smallskip
	\item[{\rm (ii)}] 	
	$\Lambda<0$ and $\left(W^{\bar r}, \bar{g}\right) $ is isometric to
	$$\quad \quad \quad \left([0, \infty) \times \bar{N}^{{\bar r}-1},   \ ds^{2}+ \sqrt{-\Lambda} \cosh ^{2}(\sqrt{-\Lambda} s) g_{\bar{N}}\right), $$ 
	where $(\bar{N}^{{\bar r}-1}, g_{\bar{N}})$ is an Einstein metric with negative scalar curvature, and $f = -m\log (\sinh\sqrt{-\Lambda}s)$.
	
	\smallskip
	\item[{\rm (iii)}] 	
	$\Lambda <0$ and
	$\left(W^{\bar r}, \bar{g}\right) $ is isometric to 
	$$\left(\mathbb{R}\times F^{{\bar r}-1}, \ ds^{2}+ e^{2\sqrt{-\Lambda}s}g_{F}\right),$$ where $(F^{{\bar r}-1}, g_{F})$ is Ricci flat, and $f =-m{\sqrt{-\Lambda}s}$.
	
	\smallskip
	\item[{\rm (iv)}] 	
	$\Lambda<0$ and $\left(W^{\bar r}, \bar{g}\right) $ is isometric to 
	$$\left(\mathbb{H}^{{\bar r}}, \ ds^{2}+ \sqrt{-\Lambda} \sinh^2(\sqrt{-\Lambda} s) g_{\mathbb{S}^{\bar r-1}}\right),$$ 
	where $\mathbb{H}^{\bar r}$ is a hyperbolic space, and $f= -m\log(\cosh \sqrt{-\Lambda}s)$.
\end{enumerate}		
	\end{enumerate}	
\end{theorem}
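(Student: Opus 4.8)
The plan is to run the scheme of the proof sketch of Theorem~\ref{complete'} above, now invoking the ``with boundary'' classification Proposition~\ref{propkappaEinstein} in place of Proposition~\ref{propkappaEinstein'}. First I would apply Proposition~\ref{two2.1}, which is already formulated for complete manifolds with or without boundary, to rule out case~(c) of the local structure theorem, Theorem~\ref{local}. Hence over the open dense subset $M_A\cap\{\nabla f\neq 0\}\subset M$ the metric $g$ is locally of type~(a) or type~(b) of Theorem~\ref{local}, and since $(M^n,g,f)$ is non-trivial the Einstein alternative in type~(a) is discarded.

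If $(M^n,g,f)$ is locally of type~(a), then by Theorem~\ref{same} (see also Theorem~\ref{local}(a)) its $D$-tensor vanishes, i.e.\ $C=0$ and $W(\nabla f,\cdot,\cdot,\cdot)=0$. I would then quote the global classification of complete (with or without boundary) quasi-Einstein manifolds with harmonic Weyl curvature and $W(\nabla f,\cdot,\cdot,\cdot)=0$ of He--Petersen--Wylie, \cite[Theorem~1.2]{HPW} (cf.\ also \cite{FLGR, CY, Kim2}), to conclude that $(M^n,g)$ is a quotient of a warped product $(\mathbb{R},ds^2)\times_h(N^{n-1},\bar g)$ with Einstein fiber, which is part~(a).

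Suppose instead $(M^n,g,f)$ is locally of type~(b), so $2\le\bar r\le n-2$. Passing to the universal cover $(\tilde M^n,\tilde g,\tilde f)$, I would argue that the two Ricci-eigendistributions appearing in Theorem~\ref{local}(b), tangent respectively to the $W^{\bar r}$- and $N^{n-\bar r}$-factors, extend to globally defined parallel distributions on $\tilde M^n$: on $M_A\cap\{\nabla f\neq 0\}$ the metric is literally a Riemannian product, and real-analyticity of $\tilde g$ in harmonic coordinates \cite[Proposition~2.4]{HPW} propagates parallelism across the complement and the boundary. A de~Rham-type splitting then yields a global Riemannian product $(\tilde M^n,\tilde g)=(M_1^{\bar r},g_1)\times(M_2^{n-\bar r},g_2)$ of complete, simply connected Einstein manifolds with Einstein constants $\rho$ and $\lambda$ respectively, in which $\tilde f$ depends only on the $M_1$-coordinate. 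Thus $(M_1^{\bar r},g_1,\tilde f)$ is a complete (possibly with boundary), non-trivial $(\lambda,\bar r+m)$-Einstein manifold that is also $\rho$-Einstein, with $\rho=\frac{\bar r-1}{m+\bar r-1}\lambda$ and $\frac{\lambda-\rho}{m}=\Lambda$. Since type~(b) forces $\rho\neq 0$, hence $\lambda\neq 0$, case~(ii) of Proposition~\ref{propkappaEinstein} is excluded, and its cases (i), (iii), (iv), (v) become, after relabelling, cases (i)--(iv) of Theorem~\ref{complete}; finally $(M^n,g)$ is a quotient of $(\tilde M^n,\tilde g)$, which gives part~(b).

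The hard part will be the globalization in the type~(b) case: upgrading the local product structure of Theorem~\ref{local}(b) to a genuine Riemannian splitting of the universal cover. This requires checking that the eigendistributions extend smoothly and remain parallel across the closed nowhere-dense set $M\setminus\bigl(M_A\cap\{\nabla f\neq 0\}\bigr)$ and across $\partial M$ --- where analyticity together with de~Rham's theorem does the work --- and verifying carefully that $\tilde f$ is a function of a single variable on $M_1$ and that $M_2$ inherits a complete Einstein metric. Once the splitting is secured, what remains is routine: matching the explicit solutions for $h_1$ and $f$ listed in Theorem~\ref{local}(b) against the normal forms of Proposition~\ref{propkappaEinstein}.
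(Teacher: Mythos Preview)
Your proposal is correct and follows essentially the same three-step scheme as the paper: rule out case~(c) via Proposition~\ref{two2.1}, handle the $D$-flat case~(a) by quoting \cite[Theorem~1.2]{HPW}, and in case~(b) pass to the universal cover, globalize the local product via analyticity, and then invoke Proposition~\ref{propkappaEinstein} (with $\lambda\neq 0$ killing its case~(ii)). Your identification of the globalization step as the delicate point is accurate; the paper itself only sketches this (``one can show that\ldots''), and your outline of the argument via parallel eigendistributions and de~Rham splitting is in fact more explicit than what the paper provides.
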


\bigskip
\noindent {\bf Acknowledgments.} We would like to thank Dr. Junming Xie for helpful discussions.  Part of this work was carried out while the second author was visiting Lehigh University from August, 2019 to August, 2020. She would like to thank the Department of Mathematics at Lehigh University for its hospitality and for providing an excellent research  environment. She would also like to thank Professor Yu Zheng and Professor Zhen Guo for their constant encouragements and support.

\medskip

\end{document}